\newcommand{\X}{\mathcal{X}}
\newcommand{\A}{\mathcal{A}}
\newcommand{\B}{\mathcal{B}}
\newcommand{\Real}{\mathbb{R}}
\newcommand{\Regret}{\mathbf{Reg}^s}
\newcommand{\DRegret}{\mathbf{Reg}^d}
\newcommand{\norm}[1]{\|#1\|}
\newcommand{\inner}[2]{\langle #1,#2 \rangle}
\newtheorem{theorem}{Theorem}[section]
\newtheorem{definition}[theorem]{Definition}
\newtheorem{lemma}[theorem]{Lemma}
\newtheorem{assumption}[theorem]{Assumption}
\newtheorem{remark}[theorem]{Remark}
\def\@settitle{\begin{center}%
		\baselineskip14\p@\relax
		\normalfont\LARGE\scshape\bfseries
		\@title
	\end{center}%
}
\def\subsection{\@startsection{subsection}{2}%
	\z@{.5\linespacing\@plus.7\linespacing}{.5\linespacing}%
	{\normalfont\large\bfseries}}
\def\subsubsection{\@startsection{subsubsection}{3}%
	\z@{.5\linespacing\@plus.7\linespacing}{.5\linespacing}%
	{\normalfont\itshape}}
\date{\today}
\authors}
\title[Adaptive Composite Online Optimization]{Adaptive Composite Online Optimization: \\ Predictions in Static and Dynamic Environments}
\author[P. Zattoni Scroccaro]{Pedro Zattoni Scroccaro}
\author[A. S. Kolarijani]{Arman Sharifi Kolarijani}
\author[P. {Mohajerin Esfahani}]{Peyman {Mohajerin Esfahani}}
\thanks{The authors are with the Delft Center for Systems and Control, TU Delft, The Netherlands. (e-mails: P.ZattoniScroccaro@tudelft.nl; A.SharifiKolarijani@tudelft.nl; P.MohajerinEsfahani@tudelft.nl.). This research is partially supported by the ERC grant TRUST-949796.}
\begin{document}

\maketitle
	
\begin{abstract}   
In the past few years, Online Convex Optimization (OCO) has received notable attention in the control literature thanks to its flexible real-time nature and powerful performance guarantees. In this paper, we propose new step-size rules and OCO algorithms that simultaneously exploit {\em gradient predictions}, {\em function predictions} and {\em dynamics}, features particularly pertinent to control applications. The proposed algorithms enjoy static and dynamic regret bounds in terms of the dynamics of the reference action sequence, gradient prediction error, and function prediction error, which are generalizations of known regularity measures from the literature. We present results for both convex and strongly convex costs. We validate the performance of the proposed algorithms in a trajectory tracking case study, as well as portfolio optimization using real-world datasets.
\end{abstract}

\section{Introduction}\label{sec:introduction}

The standard framework of Online Convex Optimization (OCO) can be described as a game between a Player and Nature, played over $T$ rounds.
Let $\mathcal{A}$ be the Player's action space.
Suppose that $\X\subseteq \mathcal{A}$ is a convex set representing the set of possible actions of the Player.
Moreover, let $\mathcal{F}$ denote a set of convex functions available to Nature.
At each round $t$, the Player chooses an action $x_t \in \X$.
After the Player commits with an action, Nature reveals a convex cost $f_t : \X \to \Real$ where $f_t\in\mathcal{F}$.
The Player suffers the loss $f_t(x_t)$.
The goal of the Player is to perform as well as possible against the costs chosen by Nature. (See \cite{hazan2016introduction}, \cite{cesa2006prediction} and \cite{shalev2012online} for in-depth studies of fundamental theories of OCO and its many applications.)

A common metric to evaluate the performance of the Player is the so-called \textit{static regret} defined as
\begin{align} \label{eq:regret}
    \Regret_T := \sum^T_{t=1} f_t(x_t) - \min_{x \in \X} \sum^T_{t=1} f_t(x).
\end{align}
Intuitively, this metric quantifies how well the Player performs against the \emph{best fixed} action computed in hindsight. Based on this notion of regret, OCO algorithms are designed such that the resulting action sequence $\{x_t\}^T_{t=1}$ guarantees a \textit{sub-linear} regret w.r.t. $T$, i.e., $\lim_{T\to\infty}(\Regret_T/T)=0$. In other words, such OCO strategies perform (on average) as well as the best fixed action in hindsight. A standard algorithm to choose $x_t$ is called Online Mirror Descent (OMD) algorithm
\begin{align} \label{alg:OMD}
\tag{OMD}
x_{t+1} = \text{arg}\min_{x\in\X} \ \big\{ \eta_t \inner{\nabla f_t(x_t)}{x}  + \B_h(x,x_t)\big\},
\end{align}
where $\eta_t$ denotes the step-size and $\B_h$ is the Bregman divergence functional \cite{cesa2006prediction}. By choosing $\eta_t$ appropriately, Algorithm \ref{alg:OMD} guarantees ${\Regret_T \leq O(\sqrt{T})}$ \cite{bubeck2011introduction} or ${\Regret_T \leq O(\log(T))}$ \cite{shalev2007logarithmic}, based on the regularity of the cost set $\mathcal{F}$. Moreover, Abernethy et al. \cite{abernethy2008optimal} showed that these regret rates are in fact optimal by the \emph{minimax} formulation of OCO problems. 

However, there are many OCO problems in which the Player and Nature \emph{do not exactly follow} the rules of the sequential game mentioned above. In this paper, we focus on the case of \emph{OCO with predictions}. In these scenarios, we assume access to \emph{predictions} about the costs of the problem being studied, and we use OCO algorithms combined with these predictions in order to achieve improved regret guarantees. For instance, if our OCO problem is related to estimating the evolution of dynamical parameters of a system, predictions could come from a dynamical model we have of the system (see Section \ref{sec:numerical}). This approach is inspired by the classical control theory literature, in which dynamical and/or predictive models of the system being controlled are almost always assumed to exist. Moreover, there has been recent interest from both the online learning and controls communities in combining Online Convex Optimization techniques to control problems, e.g., \cite{pmlr-v97-agarwal19c, agarwal2019logarithmic, pmlr-v117-hazan20a, wagener2019online}. Also, most of the results presented in this work apply to problems with composite costs with nonsmooth components (e.g., $\|\cdot\|_1$). These results open up even more possibilities of connections with control applications, for instance, $\ell_1$ optimization for sparse networked feedback control \cite{nagahara2013sparse, nagahara2015maximum}. Therefore, we hope that this work lays a theoretical foundation and also inspires new works in the intersection of OCO and control theory.

Next, we formally define important notions that will be used throughout the paper.

\subsection{Gradient Predictions}

The minimax regret bounds for OCO algorithms are derived assuming a worst-case (i.e. fully adversarial) cost sequence $\{f_t\}^T_{t=1}$. The cost sequence is however \emph{not} completely adversarial in many practical OCO problems \cite{rakhlin2013online}. In such problems, the Player can (partially) \emph{predict} the \emph{unseen} cost $f_t$ at round $t$, before deciding its action $x_t$. \footnote{This assumption deviates from the standard OCO protocol, where Nature reveals $f_t$ only \emph{after} the Player chooses $x_t$.} It is hence natural to expect that one can possibly exploit the predictability of an OCO problem to achieve tighter regret bounds.

A generic notion of the predictability of Nature's moves can be stated as follows \cite{rakhlin2013online}. At the outset of each round $t \in [T]$, the Player has access to the value of a function
 \begin{align*}
    M_t: \X^{t-1} \times \mathcal{F}^{t-1} \times \mathcal{I}^{t-1} \to \mathcal{P},     
 \end{align*}
where $\mathcal{I}$ denotes some information space provided to the Player via an exogenous source and $\mathcal{P}$ is the space to which each predictable entity belongs.  In particular, a certain class of OCO problems with predictability is the class of OCO problems with \emph{gradient predictions}. Observe that here $\mathcal{P}\subseteq\mathcal{A}^*$, where $\mathcal{A}^*$ is the dual space of the action space $\mathcal{A}$. To exploit gradient predictions in OCO problems, Rakhlin and Sridharan \cite{rakhlin2013online} proposed the Optimistic Mirror Descent (OptMD) algorithm
\begin{align} \label{alg:OptMD}
\tag{OptMD}
\begin{aligned}
x_t &= \text{arg}\min_{x\in\X} \ \big\{ \eta_t \inner{M_t}{x}  + \B_h(x,y_{t-1})\big\} \\
y_t &= \text{arg}\min_{y\in\X} \ \big\{ \eta_t \inner{\nabla f_t(x_t)}{y}  + \B_h(y,y_{t-1})\big\},
\end{aligned}
\end{align}
where $\{M_t\}_{t=1}^T$ is a generic gradient prediction sequence.\footnote{Notice that Algorithm \ref{alg:OptMD} reduces to Algorithm \ref{alg:OMD} when $M_t=0$.}
In \cite{rakhlin2013optimization}, the authors further provided an adaptive step-size rule for Algorithm \ref{alg:OptMD} such that $\Regret_T \leq O(1+\sqrt{D_T})$, where
\begin{align} \label{eq:D_T}
    D_T := \sum^T_{t=1} \norm{\nabla f_t(x_t) - M_t}^2_*.
\end{align}
When the Player has access to $\nabla f_t(\cdot)$ \emph{before} choosing $x_t$, we say that the Player has access to \emph{perfect gradient predictions}. In this scenario, Ho-Nguyen and Kılınc-Karzan \cite{ho2019exploiting} showed that by setting $M_t:=\nabla f_t(y_{t-1})$, $\eta_t \leq 1/\beta$ and when $\mathcal{F}$ represents $\beta$-\textit{smooth} functions, Algorithm \ref{alg:OptMD} guarantees $\Regret_T \leq O(1)$.

\subsection{The Problem with $D_T$}
\label{subsec:DT_problem}

In the following, we argue that regret bounds given in terms of $D_T$ are not suitable for exploiting gradient predictions, mainly because $x_t$ \textbf{is not available} at the beginning of round $t$ (see Algorithm \ref{alg:OptMD}). In some works that prove regret bounds in terms of $D_T$ (e.g. \cite{rakhlin2013optimization,jadbabaie2015online}) it is argued that for ``predictable sequences'', external knowledge of the gradient sequence can be used to achieve tighter regret bounds. For example, in \cite{jadbabaie2015online}, the authors state that: ``...one can get a tighter bound for regret once the learner advances a sequence of conjectures $\{M_t\}_{t=1}^T$ well-aligned with the gradients''. However, consider the following scenario: at the beginning of round $t$, the Player has access to a prediction of $\nabla f_t(\cdot)$, namely $\nabla \hat{f}_t(\cdot)$. Now, based on those regret bounds given in terms of $D_T$, how one would choose $M_t$ when using Algorithm \ref{alg:OptMD}? Naturally, we want to choose $M_t$ so that $D_T$ is as small as possible (recall that $D_T = \sum_{t=1}^T \|\nabla f_t(x_t) - M_t\|^2_*$). However, since $x_t$ is not available at the beginning of round $t$, we cannot set $M_t = \nabla \hat{f}_t(x_t)$. Thus, from these regret bounds, it is not clear how one should choose $M_t$ in order to exploit this type of gradient prediction. Moreover, Ho-Nguyen and Kılınc-Karzan \cite{ho2019exploiting} showed that when perfect gradient predictions are available (that is, $\nabla \hat{f}_t(\cdot) = \nabla f_t(\cdot)$), constant static regret is achievable. Still, this constant regret result is not recovered by the regret bound $\Regret_T \leq O(1+\sqrt{D_T})$ given in \cite{rakhlin2013optimization}, even when perfect gradient predictions are available. In fact, since smoothness of the cost is not assumed in \cite{rakhlin2013optimization}, if it was possible to choose $M_t$ such that $D_T = 0$ (i.e., such that $\Regret_T \leq O(1)$), this would contradict the lower bound for first-order optimization methods \cite{nesterov2004introductory}, \cite[Remark 1]{yang2014regret}. Therefore, we conclude that in order to effectively exploit gradient predictions, a different approach must be used.

\subsection{Dynamic Environments and Regularity Measures}

In the regret notion \eqref{eq:regret}, the Player's cumulative loss competes against the loss of the best fixed action in hindsight. There are, on the other hand, many OCO problems where the best fixed action is not accessible or does not exist \cite{besbes2015non}. Thus, in those cases, the use of the regret \eqref{eq:regret} is not convenient anymore. The term OCO problems in \emph{dynamic environments} is used in the literature for such problems \cite{hall2015online}. 

To generalize the standard regret notion in order to tackle these scenarios, Zinkevich \cite{zinkevich2003online} proposed to compare the Player's performance against a general dynamical \textit{reference sequence} $\{ u_t \}^T_{t=1}\in\X^T$.
The resulting metric is called the \textit{dynamic regret}, defined as
\begin{equation} \label{eq:dynamic_regret}
    \DRegret_T := \sum^T_{t=1} f_t(x_t) - \sum^T_{t=1} f_t(u_t).
\end{equation}
Unfortunately, it is impossible to achieve a sub-linear dynamic regret for an arbitrarily chosen $\{ u_t \}^T_{t=1}$ \cite{mokhtari2016online}. Thus, in order to achieve \emph{meaningful} dynamic regret bounds, it is common to place extra \emph{regularity assumptions} on the costs and/or the reference sequence. For example, Hall and Willett \cite{hall2013dynamical} consider the bounded variability of the reference sequence in terms of
\begin{equation} \label{eq:C_T}
    C_T := \sum^T_{t=1} \norm{u_{t+1} - u_t}.
\end{equation}
For convex costs, the authors show that Algorithm \ref{alg:OMD} guarantees ${\DRegret_T \leq O(\sqrt{T}(1+C_T))}$. The authors further consider that the Player has access to dynamical models ${\Phi_t: \X \to \X}$ of the reference sequence, that is, models that approximate the true dynamical models $\Phi^\star_t$, i.e., $u_{t+1} = \Phi^\star_t(u_t)$.
They employ $\Phi_t(x_t)$ instead of $x_t$ in Algorithm \ref{alg:OMD}
and prove $\DRegret_T \leq O(\sqrt{T}(1+C'_T))$, where
\begin{align} \label{eq:C'_T}
    C'_T := \sum^T_{t=1} \norm{u_{t+1} - \Phi_t(u_t)}.
\end{align}
When $\Phi_t$ approximates the true dynamics well enough, we may have $C'_T\leq C_T$, which in turn implies tighter dynamic regret bounds. Subsequently, Jadbabaie et al. \cite{jadbabaie2015online} studied dynamical environments to account for the cases with gradient predictions. The authors show that Algorithm \ref{alg:OptMD} guarantees $\DRegret_T \leq O\left(\sqrt{1+D_T}(1+C_T)\right)$ in such cases. Finally, using an expert-based algorithm called \emph{Ader}, Zhang et al. \cite{zhang2018adaptive} showed that it guarantees the optimal bound $\DRegret_T \leq O\left(\sqrt{T(1+C'_T)}\right)$.

Another important regularity measure popular in the literature is the \textit{temporal variability} of the cost sequence
\begin{equation}
    \label{eq:V_T}
    V_T := \sum_{t=2}^T \max_{x \in \X} |f_t(x) - f_{t-1}(x)|.
\end{equation}
In the setting of stochastic optimization with noisy gradients, Besbes et al. \cite{besbes2015non} show that a restarted gradient descent algorithm incurs dynamic regret bounded by $O(T^{2/3}(1 + v)^{1/3})$, where $v$ is an upper bound of $V_T$ known in advance, and Jadbabaie et al. \cite{jadbabaie2015online} provided an algorithm which guarantees a dynamic regret bound of $\tilde{O}(\sqrt{D_T + 1} + \min\{\sqrt{(D_T + 1)C_T}, (D_T+1)^{1/3}T^{1/3}V_T^{1/3}\})$ \footnote{The $\tilde{O}$ notation hides poly-logarithmic terms.}, for the specific case when the regret is defined w.r.t. the reference sequence $u_t = \arg\min_{x\in\X} f_t(x)$, also known as \textit{restricted dynamic regret} \cite{campolongo2021closer}.

\subsection{Composite Cost, Implicit Updates and Function Predictions}

A cost function $f_t$ is called composite if it can be decomposed as $f_t(\cdot) = s_t(\cdot) + r_t(\cdot)$. For example, Duchi et al. \cite{duchi2010composite} consider the case when $r_t(\cdot) = r(\cdot)$ for all $t$, and proposes the Composite Objective Mirror Descent (COMID) algorithm
\begin{align} \label{alg:COMID}
\tag{COMID}
x_{t+1} = \text{arg}\min_{x\in\X} \ \big\{ \eta_t \inner{\nabla s_t(x_t)}{x}  + \eta_t r(x) +  \B_h(x,x_t)\big\},
\end{align}
where differently from \ref{alg:OMD}, the fixed part $r(\cdot)$ is not linearized. This can be advantageous when, for example, $r(\cdot) = \|\cdot\|_1$. In this case, using \ref{alg:COMID} would lead to sparse updates, whereas \ref{alg:OMD} would not \cite{duchi2010composite}. In the offline optimization literature (i.e., $f_t(\cdot) = f(\cdot)$ for all $t$), algorithms that partially linearize the cost function are called \textit{proximal gradient methods} \cite{parikh2014proximal, beck2017first}. These algorithms are usually used when $s$ is smooth, but $r$ is not. Then, by linearizing only the smooth component of $f$, a proximal gradient method can lead to convergence rates that match the one of \ref{alg:OMD} for smooth costs (e.g. $O(1/T)$ rate instead of $O(1/\sqrt{T})$). Intuitively, when smoothness is necessary to prove a convergence rate for some first-order algorithms, one can usually deal with nonsmooth components by not linearizing them in the proximal updates.

Somewhat related to proximal gradient methods are the so-called implicit updates, also known as Implicit Online Mirror Descent (IOMD) \cite{kivinen1997exponentiated, kulis2010implicit, campolongo2020temporal}
\begin{align} \label{alg:IOMD}
\tag{IOMD}
x_{t+1} = \text{arg}\min_{x\in\X} \ \big\{ \eta_t f_t(x) +  \B_h(x,x_t)\big\}.
\end{align}
Kulis and Bartlett \cite{kulis2010implicit} proved regret bounds for \ref{alg:IOMD} that match the ones from \ref{alg:OMD}. McMahan \cite{mcmahan2010unified} and Song et al. \cite{song2018fully} quantify the advantage of implicit updates through non-negative, data-dependent quantities. Recently, Campolongo and Orabona \cite{campolongo2020temporal} showed that an adaptive version of \ref{alg:IOMD} guarantees $O(\min\{V_T,\sqrt{T}\})$. Moreover, in dynamic environments, Campolongo and Orabona \cite{campolongo2021closer} show that a version of \ref{alg:IOMD} guarantees $O(\min\{V_T,\sqrt{T(1 + \tau)}\})$, where $\tau$ is a known upper bound of $C_T$. When $\tau$ is not known, a similar bound $\tilde{O}(\min\{V_T,\sqrt{T(1 + C_T)}\})$ can be achieved by combining implicit updates with experts and strongly-adaptive algorithms \cite{campolongo2021closer}.

When a linearized version of the cost $f_t$ is used in our OCO strategy, e.g., \ref{alg:OMD} algorithm, it is natural to expect that we only need \textit{gradient predictions} to exploit information of unseen costs, as it is done in the \ref{alg:OptMD} algorithm. However, when using strategies that partially linearize the cost $f_t$ (or do not linearize it at all), one should not hope that gradient predictions of the cost can be effectively used. Therefore, in order to exploit predictive information about cost functions, we will require \textit{gradient predictions} of its linearized component and \textit{function predictions} of its non-linearized component. For example, for the composite cost $f_t(x) = s_t(x) + r_t(x)$, if we decide to linearize $s_t(x)$ and not linearize $r_t(x)$, we will require gradient predictions of $s_t$ and function predictions of $r_t$, denoted as $\hat{r}_t$.

\subsection{Problem Description and Related Works}

In this paper, we consider OCO problems with composite costs of the form
$$
f_t(\cdot) = s_t(\cdot) + r_t(\cdot),
$$
in both static and dynamic environments. Recall that Ho-Nguyen and Kılınç-Karzan \cite{ho2019exploiting} observed that perfect gradient predictability in the form of $M_t=\nabla f_t(y_{t-1})$ implies that Algorithm \ref{alg:OptMD} guarantees constant static regret.
Motivated by this observation and the discussion presented in Subsection \ref{subsec:DT_problem}, we extend this idea to the case of an ``{imperfect}'' gradient predictability.
To do so, we introduce the \textit{gradient prediction error measure}
\begin{align} \label{eq:D_T'}
    D_t' := \sum^{t}_{\tau=1} \norm{\nabla s_\tau(y_{\tau-1}) - \nabla \hat{s}_\tau(y_{\tau-1})}^2_*,
\end{align}
where $\{y_{\tau-1}\}_{\tau=1}^t$ are points generated by an online algorithm. Notice that we changed the notation from $M_t$ to $\nabla \hat{s}_\tau(y_{\tau-1})$. We do it so that the connection between the gradient of $s_t$ and the gradient predictions is clearer. Also, notice that this measure refers to gradient predictions only for the $s_t$ component of $f_t$. Thus, we also introduce the \textit{function prediction error measure}
\begin{align} \label{eq:V_T'}
    V_t' := \sum_{\tau=1}^t | r_\tau(x_\tau) - \hat{r}_\tau(x_\tau) + \hat{r}_\tau(y_\tau) - r_\tau(y_\tau) |,
\end{align}
where $\{x_\tau\}_{\tau=1}^t$ and $\{y_\tau\}_{\tau=1}^t$ are points generated by an online algorithm. When $s_t = 0$, $V_T'$ can be interpreted as a generalization of $V_T$ for the case when function predictions are available. Namely, when function predictions are not available, by setting $\hat{r}_t  = r_{t-1}$, we get $V'_T \leq 2 V_T$. Moreover, in dynamic environments, we further suppose that the Player has access to a (possibly approximate) dynamical model $\Phi_t$ of the reference sequence $\{u_t\}_{t=1}^T$. This is a useful assumption, which has been used in practical applications of OCO algorithms \cite{shahrampour2017distributed, wagener2019online}. We are now set to state the problem considered in this paper.

\noindent\textbf{Problem:} Design and analyze OCO algorithms such that the corresponding regret bounds exploit
\begin{itemize}
    \item (possibly imperfect) gradient and/or function predictions of the components of the cost sequence $\{f_t\}_{t=1}^T$;
    \item (possibly approximated) dynamical models $\Phi_t$ of the reference sequence $\{u_t\}_{t=1}^T$.
\end{itemize}

Other than the works already mentioned in Section \ref{sec:introduction}, several studies in the literature propose algorithms that take advantage of the predictability of the cost sequence. Several works exploit predictions in OCO problems with switching costs. In this scenario, at round $t$, the Player suffers the loss $f_t(x_t,x_{t-1}) = c_t(x_t) + \gamma \norm{x_t - x_{t-1}}$, where $c_t$ is a convex function and $\gamma \norm{x_t - x_{t-1}}$ is the switching cost. In order to exploit predictions in these problems, it is usually necessary to have a \textit{window of future cost predictions} \cite{chen2015online, chen2016using, lin2020online, li2020leveraging, li2020online}. Another application where predictions have been used is the so-called \textit{online control} problem. For this class of problems, due to the dynamics of the system, the cost $f_t$ may depend on the whole history of previous actions, and a window of predictions is again necessary \cite{li2019online, yu2020power, li2022robustness}. Thus, since in this work, the cost at time $t$ only depends on $x_t$ and we only use predictions about the very next cost, results on OCO with switching costs and online control are not directly comparable to this paper's results. Dekel et al. \cite{dekel2017online} study Online Linear Optimization.
The authors suppose that at the outset of each round, the Player has access to a vector (or hint) that is correlated with the cost to be incurred to the Player. If \textit{all} hints are sufficiently good and the action set possesses certain geometrical properties, the authors show $\Regret_T \leq O(\log(T))$. Recently, Bhaskara et al. \cite{bhaskara2020online} extended this result to the case when not all hints are correlated with the true cost vector. In dynamic environments, Lesage-Landry et al. \cite{lesage2020predictive} showed that tighter dynamic regret bounds can be achieved by only using predictions that meet certain conditions. In \cite{ravier2019prediction}, the authors employ gradient predictions in order to obtain possibly tighter dynamic regret bounds. However, the proposed approach yields regret bounds that lack worst-case guarantees.  In \cite{chang2021online}, the authors propose an online optimistic Newton method that exploits gradient and hessian predictions and prove dynamic regret bounds for this algorithm.

\subsection{Contributions and Organization}

A summary of the main results is now given.
\begin{enumerate}[label=(\roman*)]
    \item \textbf{Static regret for convex costs}: In static environments, we propose a novel algorithm that uses step-sizes that adapt to the quality of gradient and function predictions, guaranteeing $\Regret_T \leq O\left(1 + \sqrt{D_T'} + \min\left\{ V_T', \sqrt{T} \right\}\right)$ for convex costs (Theorem \ref{theorem:OptCMD_convex}). This result generalizes the best-case $\Regret_T \leq O(1)$ \cite{ho2019exploiting} and worst-case $\Regret_T \leq O(\sqrt{T})$ \cite{zinkevich2003online} regret rates. 
    
    \item \textbf{Static regret for strongly convex costs}: When the costs are strongly convex and we have access to the $r_t$ components, we propose an adaptive step-size $\eta_t$  which improves the regret to $\Regret_T \leq O(1 + \log(1 + D'_T))$ (Theorem \ref{theorem:OptCMD_strong}).  This result generalizes the best-case $\Regret_T \leq O(1)$ \cite{ho2019exploiting} and worst-case $\Regret_T \leq O(\log (T)$ \cite{hazan2007logarithmic} regret rates. 

    \item \textbf{Dynamic regret for convex costs}: For dynamic environments, we introduce a new variant of Algorithm \ref{alg:OptMD} that simultaneously exploits gradient predictions, function predictions, and the dynamics of the reference sequence. We show that it guarantees the dynamic regret bound
    $\DRegret_T \leq O\bigg((1 + C'_T )\Big(1 + \sqrt{D_T'} + \min\left\{ V_T', \sqrt{(1 + C'_T) T} \right\}\Big)\bigg)$ 
    for convex costs (Theorem \ref{theorem:OptDCMD_convex}).
    
    \item \textbf{Dynamic regret for implicit updates}: Using fully implicit updates, we show that the proposed algorithm guarantees the dynamic regret bound $ \DRegret_T \leq O \left( \min \left\{ V_T', \ \sqrt{(1 + \tau)T} \right\} \right)$, where $\tau$ is a known upper bound to $C_T'$ (Theorem \ref{theorem:OptDCMD_implicit}). This result generalizes the dynamic regret bounds of \cite{campolongo2021closer}, for the case of function predictions.
    
    \item \textbf{Dynamic regret for fully adaptive step-size}: Finally, when we have access to the $r_t$ component of the costs, we propose a step-size $\eta_t$ which adapts to gradient predictions and $C_t'$ on the fly. The resulting algorithm guarantees $\DRegret_T \leq O\left(\sqrt{(\theta_T + D'_T)(1 + C'_T)}\right)$, where $\theta_T$ is a parameter used to control the size of the step-size. (Theorem \ref{theorem:OptDMD_convex_2}).
\end{enumerate}
For the ease of the readers, we also provide Tables~\ref{table:comparison}-\ref{table:func_pred} in Appendix~\ref{app:tables} presenting the above contributions within the existing OCO literature reviewed earlier, with a particular focus on the predictions and composite features in the context of static regret bounds. 

The organization of the paper is as follows. The main results of this study are provided in Section \ref{sec:main_results}. To improve the flow of the paper, we moved the proofs of our main results to Section \ref{sec:proofs}. Numerical experiments are presented in Section \ref{sec:numerical}. Finally, in Appendix \ref{app:tables}, we present tables that position our work with respect to a body of the OCO literature.

\section{Main Results}\label{sec:main_results}

We start with some definitions and assumptions that will be used throughout the paper.

\subsection{Mathematical Preliminaries}

Let the action set $\X \subset \Real^n$.
We denote by $\norm{\cdot}_*$ the dual norm of $\norm{\cdot}$. Also, we define $[T] := \{1,2,\dots,T\}$. 

\begin{definition}[Bregman divergence]\label{def:bregman}
Let $h: \X \to \Real$ be a differentiable convex function. The Bregman divergence of $x,y \in \X$, w.r.t. the function $h$ is $\B_h(x,y) := h(x) - h(y) - \inner{\nabla h(y)}{x-y}$.
\end{definition}

\begin{definition}[$\alpha$-Strong convexity]\label{def:strong_convexity}
A function $f: \X \to \Real$ is $\alpha$-\emph{strongly convex} w.r.t. a norm $\norm{\cdot}$ if $f(x) - f(y) \leq \inner{\nabla f(x)}{x-y} - \frac{\alpha}{2} \norm{x-y}^2$, for all $x,y \in \X$.
\end{definition}

\begin{definition}[$\beta$-Smoothness] 
\label{def:smooth}
A function $f: \X \to \Real$ is $\beta$-\emph{smooth} w.r.t. a norm $\norm{\cdot}$ if it is differentiable and $\norm{\nabla f(x) - \nabla f(y)}_* \leq \beta \norm{x - y}$, for all $x,y \in \X$. 
\end{definition}

Next, we collect several assumptions which we will employ in the results to follow.
\begin{assumption}[Regularity assumptions]
\label{ass:regularity}
Let $\A$ be a Banach space equipped with the norm $\norm{\cdot}$. Suppose that
\begin{itemize}
    \item The set $\X$ is a convex subset of $\A$;
    \item The map $h : \A \to \Real$ is differentiable and $1$-strongly convex on $\X$;
    \item Each member of the cost sequence $\{ s_t \}_{t=1}^T$ is convex and $\beta$-smooth. Each member of the cost sequence $\{ r_t \}_{t=1}^T$ is convex;
    \item $\B_h(x,y) \leq R^2$ for all $x,y \in \X$, where $R>0$; 
    \item For all $t\in[T]$, the gradient prediction $\nabla \hat{s}_t$ satisfies $\norm{\nabla s_t(x) - \nabla \hat{s}_t(x)}_* \leq \sigma < \infty$ for any $x \in \X$;
    \item For all $t\in[T]$, the function prediction $\hat{r}_t$ is convex and $|r_t(x) - \hat{r}_t(x)| < \infty$ for any $x \in \X$.
\end{itemize}
\end{assumption}
In particular, the last two points of Assumption \ref{ass:regularity} simply state that the gradient and function predictions cannot be arbitrarily bad, which would naturally prevent the use of such predictive information. Next, we provide static and dynamic regret bounds that exploit gradient/function predictability and/or dynamical models of the reference sequence.

\subsection{Static Environments}
\label{subsec:main_results_static}

Our first result concerns convex costs in static environments. In order to exploit predictive information of composite costs of the form $f_t(\cdot) = s_t(\cdot) + r_t(\cdot)$, we propose the Optimistic Composite Mirror Descent (OptCMD) algorithm
\begin{align} \label{alg:OptCMD}
\tag{OptCMD}
\begin{aligned}
x_t &= \text{arg}\min_{x\in\X} \ \big\{ \eta_t \inner{\nabla \hat{s}_t(y_{t-1})}{x} + \eta_t\hat{r}_t(x)  + \B_h(x,y_{t-1})\big\} \\
y_t &= \text{arg}\min_{y\in\X} \ \big\{ \eta_t \inner{\nabla s_t(x_t)}{y} + \eta_t r_t(y)  + \B_h(y,y_{t-1})\big\},
\end{aligned}
\end{align}
where $\nabla \hat{s}_t$ is a gradient prediction of $\nabla \hat{s}_t$ and $\hat{r}_t$ is the function prediction of $r_t$. Notice that unlike algorithms \ref{alg:COMID} and \ref{alg:IOMD}, Algorithm \ref{alg:OptCMD} makes use of an auxiliary variable $y_t$. However, $x_t$ is still the decision variable of all OCO algorithms discussed in this paper. Algorithm \ref{alg:OptCMD} can be interpreted as an extension of \ref{alg:OptMD} for composite costs with smooth and nonsmooth components. As hinted in \ref{subsec:DT_problem}, one needs smooth functions to properly exploit gradient predictions of costs. Therefore, the intuition behind Algorithm \ref{alg:OptCMD} is similar to the one from proximal gradient algorithms: we handle nonsmooth components by not linearizing them in the proximal updates while linearizing the smooth ones. This leads to using \textit{function predictions} of the nonsmooth component $r_t$, instead of gradient predictions.

\begin{theorem}[Static regret: convex costs]\label{theorem:OptCMD_convex}
Suppose that Assumption \ref{ass:regularity} holds. Using the adaptive step-size 
$$
\eta_1 = \frac{1}{2\beta}, \quad \eta_t = \left( 4\beta^2 + \left(V_{t-1}' \right)^2 + D_{t-1}' \right)^{-\frac{1}{2}}
$$
for all $t>1$, Algorithm \ref{alg:OptCMD} guarantees
\begin{equation}
    \label{eq:stat_reg_bound}
    \Regret_T \leq O\left(1 + \sqrt{D_T'} + \min\left\{ V_T', \sqrt{T} \right\}\right).
\end{equation}
\end{theorem}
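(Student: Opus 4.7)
The plan is to establish a per-round inequality of the form
$$f_t(x_t) - f_t(u) \leq \tfrac{1}{\eta_t}\bigl[\B_h(u,y_{t-1}) - \B_h(u,y_t)\bigr] + \eta_t d_t^2 + \xi_t,$$
where $d_t := \norm{\nabla s_t(y_{t-1}) - \nabla \hat s_t(y_{t-1})}_*$ and $\xi_t := |r_t(x_t) - \hat r_t(x_t) + \hat r_t(y_t) - r_t(y_t)|$, then sum over $t \in [T]$ and invoke the adaptive step-size lemma. The argument has five steps.

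\textbf{Step 1 (optimality conditions).} I would write the first-order conditions of both proximal problems. The $y_t$-update gives, for every $u\in\X$,
$$\eta_t \inner{\nabla s_t(x_t)}{y_t - u} + \eta_t(r_t(y_t) - r_t(u)) \leq \B_h(u,y_{t-1}) - \B_h(u,y_t) - \B_h(y_t,y_{t-1}),$$
and the $x_t$-update gives the analogous inequality with $y_t$ in place of $u$ and the predictions $\nabla \hat s_t(y_{t-1}), \hat r_t$ in place of the true functionals. Using $1$-strong convexity of $h$, the latter produces the two negative quadratics $-\tfrac{1}{2\eta_t}\norm{y_t-x_t}^2$ and $-\tfrac{1}{2\eta_t}\norm{x_t-y_{t-1}}^2$, which are essential for later absorption.

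\textbf{Step 2 (regret decomposition).} Adding and subtracting $r_t(y_t)$ and applying convexity of $s_t$,
$$f_t(x_t) - f_t(u) \leq \inner{\nabla s_t(x_t)}{y_t - u} + r_t(y_t) - r_t(u) + \inner{\nabla s_t(x_t)}{x_t - y_t} + r_t(x_t) - r_t(y_t).$$
The $y_t$-optimality controls the first two terms. In the remaining overhead I would insert $\pm \inner{\nabla \hat s_t(y_{t-1})}{x_t - y_t}$ and $\pm(\hat r_t(x_t) - \hat r_t(y_t))$ to expose (i) the quantity controlled by the $x_t$-optimality, (ii) the telescoping prediction-error term $\xi_t$, and (iii) a cross term in $\nabla s_t(x_t) - \nabla \hat s_t(y_{t-1})$.

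\textbf{Step 3 (cross-term absorption).} I would split the gradient mismatch as $\nabla s_t(x_t) - \nabla \hat s_t(y_{t-1}) = [\nabla s_t(x_t) - \nabla s_t(y_{t-1})] + [\nabla s_t(y_{t-1}) - \nabla \hat s_t(y_{t-1})]$, bounding the first bracket by $\beta\norm{x_t - y_{t-1}}$ via smoothness and the second by $d_t$ via the dual norm. Young's inequality with parameters tied to $\eta_t$ converts both cross terms into quadratics in $\norm{x_t-y_t}$ and $\norm{x_t - y_{t-1}}$ plus a residual $\eta_t d_t^2$. The quadratics are absorbed into the two Bregman-generated negative quadratics from Step~1; the residual is controlled by the condition $\eta_t \leq 1/(2\beta)$, which follows directly from the step-size formula. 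This yields the per-round inequality stated above.

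\textbf{Step 4 (summation).} Summing over $t$, Abel summation together with $\B_h(\cdot,\cdot)\leq R^2$ gives $\sum_t \tfrac{1}{\eta_t}[\B_h(u,y_{t-1})-\B_h(u,y_t)] \leq R^2/\eta_T$. The standard adaptive-step-size lemma $\sum_t a_t/\sqrt{c + \sum_{s<t} a_s} \leq 2\sqrt{c + \sum_{s\leq T}a_s}$ applied with $a_t = d_t^2$ yields $\sum_t \eta_t d_t^2 \leq O(\sqrt{D_T'})$. Using $1/\eta_T \leq 2\beta + V_T' + \sqrt{D_T'}$, this gives the first side $\Regret_T \leq O(1 + V_T' + \sqrt{D_T'})$.

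\textbf{Step 5 (the $\sqrt{T}$ alternative).} For the other side of the $\min$, I would exploit that convex functions on the bounded set $\X$ (bounded because $\B_h \leq R^2$ and $h$ is $1$-strongly convex) are automatically Lipschitz; hence $\xi_t \leq 2L\norm{x_t - y_t}$. Combined with the proximal-stability estimate $\norm{x_t - y_t} = O(\eta_t)$ obtained from the $x_t$-optimality, Cauchy--Schwarz produces $\sum_t \xi_t \leq 2L\sqrt{T\sum_t \norm{x_t-y_t}^2} = O(\sqrt{T})$ independently of the quality of the function predictions. Taking the minimum of the two routes yields the claimed $\min\{V_T',\sqrt{T}\}$ term.

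The main obstacle will be the bookkeeping in Step~3: one must choose Young's parameters so that \emph{both} the smoothness cross term and the prediction cross term are fully absorbed by the two negative quadratics, with no uncontrolled slack, while simultaneously keeping the $\eta_t d_t^2$ residual in a form compatible with the adaptive-step-size lemma of Step~4. The $\sqrt{T}$ alternative in Step~5 is the other subtle point, since it bypasses the predictions entirely and relies on Lipschitz/stability estimates that follow only implicitly from Assumption~\ref{ass:regularity}.
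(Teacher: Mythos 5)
Steps 1--4 of your plan essentially reproduce the paper's argument: the same two applications of the mirror-descent optimality inequality, the same insertion of $\pm\inner{\nabla\hat s_t(y_{t-1})}{x_t-y_t}$ and $\pm(\hat r_t(x_t)-\hat r_t(y_t))$ to expose $\Delta_t$, the same split of the gradient mismatch into a smoothness part (absorbed by $-\tfrac{1}{2\eta_t}\norm{x_t-y_{t-1}}^2$ using $\eta_t\le 1/(2\beta)$) and a prediction part $d_t^2$, and the same telescoping/adaptive-step-size summation. This correctly yields the branch $\Regret_T\le O(1+V_T'+\sqrt{D_T'})$. (One minor technical slip: the lemma you quote with $\sum_{s<t}a_s$ in the denominator is false as stated; you need either $\sum_{s\le t}a_s$ in the denominator, i.e.\ work with $\eta_{t+1}d_t^2$ rather than $\eta_t d_t^2$, or pay an extra telescoping term $(\eta_t-\eta_{t+1})\sigma^2$ using the boundedness of $d_t$ from Assumption \ref{ass:regularity}. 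The paper does the former.)

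Step 5, however, has a genuine gap, and it is precisely where the theorem is delicate. First, the arithmetic does not close: with $\norm{x_t-y_t}=O(\eta_t)$ and $\eta_t\le 1/(2\beta)$ one gets $\sum_t\norm{x_t-y_t}^2=O(T)$, so your Cauchy--Schwarz bound $2L\sqrt{T\sum_t\norm{x_t-y_t}^2}$ is $O(T)$, not $O(\sqrt T)$. Second, and more fundamentally, the claim that $\sum_t\xi_t=O(\sqrt T)$ ``independently of the quality of the function predictions'' is false: $V_T'=\sum_t|\Delta_t|$ can be $\Theta(T)$ (take $|\Delta_t|=\Theta(1)$ with $\norm{x_t-y_t}=\Theta(1)$ each round). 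The $\sqrt T$ branch is not obtained by bounding $V_T'$ itself; it is obtained by bounding $\lambda_T:=\sum_t\bigl(|\Delta_t|-\tfrac{1}{4c\eta_t}\norm{x_t-y_t}^2\bigr)$, i.e.\ by \emph{retaining} the negative quadratic $-\tfrac{1}{2\eta_t}\B_h(y_t,x_t)$ alongside $\Delta_t$ rather than absorbing or discarding it. The paper (following Campolongo--Orabona) shows $\lambda_t-\lambda_{t-1}\le\min\{\sqrt2 R\,G_t,\;c\eta_t G_t^2\}$ with $G_t$ a subgradient-difference bound, and then runs a self-bounding induction $\lambda_T^2=\sum_t\bigl((\lambda_t-\lambda_{t-1})^2+2(\lambda_t-\lambda_{t-1})\lambda_{t-1}\bigr)\le\sum_t(2R^2G_t^2+2c\,\eta_t\lambda_{t-1}G_t^2)$, where the crucial estimate $\eta_t\lambda_{t-1}\le 1$ holds \emph{because} $(V_{t-1}')^2$ sits inside the square root in the step-size definition. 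This is the whole reason that term is in $\eta_t$. Your per-round inequality $f_t(x_t)-f_t(u)\le\tfrac1{\eta_t}[\B_h(u,y_{t-1})-\B_h(u,y_t)]+\eta_t d_t^2+\xi_t$ has already thrown the negative quadratic away, so the $\min\{V_T',\sqrt T\}$ term cannot be recovered from it. (A smaller issue: convex functions on a compact set need not be Lipschitz up to the boundary --- consider $-\sqrt{x}$ on $[0,1]$ --- so the Lipschitz constant $L$ in your Step 5 requires a boundedness assumption on the subgradients; the paper's $G_t$ plays this role.)
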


\begin{remark}[Intuition on adaptive step-size $\eta_t$]
In Theorem \ref{theorem:OptCMD_convex}, for simplicity, consider the case when $r_t(x) = 0 \ \forall x \in \X$, i.e., $V_T' = 0$. For this scenario, we want to guarantee $O(\sqrt{T})$ regret in the worst-case, and in order to do so, it is known we need $\eta_t = O(1/\sqrt{t})$. On the other hand, with perfect gradient predictions (i.e., $D'_T = 0$), we want to guarantee $O(1)$ regret, and in order to do so, we need $\eta_t \leq O(1/\beta)$ \cite{ho2019exploiting}. Now, if we want to guarantee a regret bound that generalizes these two extreme cases, it is natural that our step size should also generalize $\eta_t = O(1/\sqrt{t})$ and $\eta_t \leq O(1/\beta)$, which is precisely the behavior of the $\eta_t$ we designed. Similar intuitions can be derived from the other scenarios and results presented in this paper.
\end{remark}

The result of Theorem \ref{theorem:OptCMD_convex} is also related to \cite[Theorem 3]{mohri2016accelerating}, where the authors prove regret bounds for the so-called \textit{Composite Adaptive Optimistic Follow-the-Regularized-Leader} (CAO-FTRL) algorithm. The key differences between these results are: the CAO-FTRL algorithm uses FTRL update steps, which can be computationally more expensive than the mirror descent steps of Algorithm \ref{alg:OptCMD}; the CAO-FTRL algorithm assumes knowledge of $r_t$ at the beginning of round $t$, thus, is less general than Algorithm \ref{alg:OptCMD}; and finally, the regret bound of \cite[Theorem 3]{mohri2016accelerating} is presented in terms of $D_T$. Here we re-emphasize that we present regret bounds in terms of $D'_T$ instead of $D_T$, which solves the issues raised in Subsection \ref{subsec:DT_problem}. Key points to achieve this result are our proposed adaptive step-size (see remark above), and the extra assumption that the costs are $\beta$-smooth. In particular, since smooth costs have Lipschitz continuous gradients, we are able to control the difference between, possibly approximate, gradient predictions.

Next, we discuss how the bound of Theorem \ref{theorem:OptCMD_convex} generalizes several regret bounds from the literature.

\begin{remark}[Generality of regret bound]
\label{remark:OptCMD_convex}
First, let us consider the case when $r_t(x) = 0 \ \forall x \in \X$, i.e., $V_T' = 0$. In this case, $f_t$ is $\beta$-smooth convex, and Algorithm \ref{alg:OptCMD} reduces to Algorithm \ref{alg:OptMD}. In this scenario, when perfect predictions are available, setting $\nabla \hat{s}_t = \nabla s_t$ implies that $D'_T=0$, and the regret inequality \eqref{eq:stat_reg_bound} reduces to $\Regret_T \leq O(1)$, recovering the result of Ho-Nguyen and Kılınç-Karzan \cite{ho2019exploiting}.
On the other hand, in view of Assumption \ref{ass:regularity}, the regret inequality \eqref{eq:stat_reg_bound} also recovers the minimax static regret $\Regret_T \leq O(\sqrt{T})$ in the worst case, that is, even if the gradient predictions are completely uncorrelated with the true gradients and we end up with $D'_T = O(T)$. Next, consider the case when $s_t(x) = 0 \ \forall x \in \X$, i.e., $D_T' = 0$. In this case, $f_t$ is a general convex function and \eqref{eq:stat_reg_bound} reduces to $O\left(1 + \min\left\{ V_T', \sqrt{T} \right\}\right)$. Again, when perfect predictions are available we recover the optimal constant regret bound, by simply setting $\hat{r}_t = r_t$. This bound generalizes the $O\left(1 + \min\left\{ V_T, \sqrt{T} \right\}\right)$ bound of Campolongo and Orabona \cite{campolongo2020temporal}, which is known to be optimal \cite[Theorem 6.3]{campolongo2020temporal}. In this case, if our function predictions are good, $V_T'$ may be small and we guarantee small regret. On the other hand, we still guarantee the standard $O(\sqrt{T})$ regret in the worst-case. 
\end{remark}

Next, we state a static regret result for strongly convex costs.
This stronger assumption on the costs allows us to achieve tighter bounds. For this result, we need the following assumption. 

\begin{assumption}[Extra regularity assumptions]
\label{ass:extra_regularity}
Suppose that the action space $\A$ is an Euclidean space equipped with the $2$-norm $\norm{\cdot}_2$ and $h(x)=\frac{1}{2}\norm{x}^2_2$. Moreover, suppose we have access to perfect function prediction of $r_t$. That is, we are able to set $\hat{r}_t = r_t$ for all $t$.
\end{assumption}
    
Notice that under Assumption \ref{ass:extra_regularity}, the Bregman divergence $\B_h(x,y)=\frac{1}{2}\norm{x-y}^2_2$. Concerning the perfect prediction of $r_t$, this is the case, for example, when this term corresponds to a fixed known regularizer, e.g., $r_t(x) = \|x\|$, or naturally when $r_t(x) = 0$ for all $t$.

\begin{theorem}[Static regret: strongly convex costs]\label{theorem:OptCMD_strong}
Suppose that assumptions \ref{ass:regularity} and \ref{ass:extra_regularity} hold and that the costs $\{ s_t \}_{t=1}^T$ are $\alpha$-strongly convex. Using the adaptive step-size 
$$
\eta_1 = \frac{1}{2\beta}, \quad \eta_t = \left(2\beta +  \frac{\alpha}{2\sigma^2}D'_{t-1} \right)^{-1}
$$
for all $t>1$, Algorithm \ref{alg:OptCMD} with $\hat{r}_t = r_t$ guarantees
\begin{align}
\label{eq:stat_reg_inequality_SC}
\Regret_T \leq O\left(1 + \log(1+D'_T)\right). 
\end{align}
\end{theorem}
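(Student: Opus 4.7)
The plan is to start from a per-round inequality for \ref{alg:OptCMD} adapted to strongly convex costs, then combine Abel summation on the telescoping Bregman terms with the extra negative quadratic terms coming from strong convexity, and finally apply a standard harmonic-series estimate to convert the leftover noise into a logarithm in $D_T'$. Concretely, using the first-order optimality of the $y_t$-update together with convexity of $r_t$ (which coincides with $\hat{r}_t$ by Assumption \ref{ass:extra_regularity}), I would derive an OptMD-type bound of the form
\begin{align*}
f_t(x_t) - f_t(u) \leq \tfrac{1}{\eta_t}\bigl(\B_h(u, y_{t-1}) - \B_h(u, y_t)\bigr) + \eta_t \norm{\nabla s_t(x_t) - \nabla \hat{s}_t(y_{t-1})}_*^2 - \tfrac{1}{2\eta_t}\norm{x_t - y_{t-1}}^2 - \tfrac{\alpha}{2}\norm{x_t - u}^2,
\end{align*}
where the last term is furnished by the $\alpha$-strong convexity of $s_t$. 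Invoking $\beta$-smoothness gives $\norm{\nabla s_t(x_t) - \nabla \hat{s}_t(y_{t-1})}_*^2 \leq 2\beta^2 \norm{x_t - y_{t-1}}^2 + 2\sigma_t^2$, with $\sigma_t^2 := \norm{\nabla s_t(y_{t-1}) - \nabla \hat{s}_t(y_{t-1})}_*^2$. Since $\eta_t \leq 1/(2\beta)$ by construction, the $\norm{x_t - y_{t-1}}^2$ pieces can be absorbed into the negative $\tfrac{1}{2\eta_t}\norm{x_t - y_{t-1}}^2$ term, leaving
\begin{align*}
f_t(x_t) - f_t(u) \leq \tfrac{1}{\eta_t}\bigl(\B_h(u, y_{t-1}) - \B_h(u, y_t)\bigr) + 2\eta_t \sigma_t^2 - \tfrac{\alpha}{2}\norm{x_t - u}^2.
\end{align*}

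Summing over $t$ and applying Abel summation to the Bregman telescope, the boundary contribution is $\tfrac{1}{\eta_1}\B_h(u, y_0) \leq \beta R^2 = O(1)$, so the critical piece is
\begin{align*}
\sum_{t=2}^T \Bigl(\tfrac{1}{\eta_t} - \tfrac{1}{\eta_{t-1}}\Bigr)\B_h(u, y_{t-1}) = \sum_{t=2}^T \tfrac{\alpha \sigma_{t-1}^2}{4\sigma^2}\norm{u - y_{t-1}}^2,
\end{align*}
where I used the identity $\tfrac{1}{\eta_t} - \tfrac{1}{\eta_{t-1}} = \tfrac{\alpha}{2\sigma^2}\sigma_{t-1}^2$, which follows directly from the prescribed $\eta_t$. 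The bound $\sigma_{t-1}^2 \leq \sigma^2$ combined with $\norm{u - y_{t-1}}^2 \leq 2\norm{u - x_t}^2 + 2\norm{x_t - y_{t-1}}^2$ upper-bounds this by $\sum_t \tfrac{\alpha}{2}\norm{u - x_t}^2 + \sum_t \tfrac{\alpha}{2}\norm{x_t - y_{t-1}}^2$. The first sum exactly cancels $-\sum_t \tfrac{\alpha}{2}\norm{x_t - u}^2$ from the strong-convexity term, and the second sum is absorbed by retaining a small fraction of the $\tfrac{1}{2\eta_t}\norm{x_t - y_{t-1}}^2$ slack in the per-round bound (this works because $\eta_t \leq 1/(2\beta) \leq 1/(2\alpha)$).

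The only residual noise is $\sum_t 2\eta_t \sigma_t^2 = \sum_t \tfrac{2(D_t' - D_{t-1}')}{2\beta + \tfrac{\alpha}{2\sigma^2} D_{t-1}'}$. Applying the standard integral estimate $\sum_t \tfrac{a_t - a_{t-1}}{c + \lambda a_{t-1}} \leq \tfrac{1}{\lambda}\log\bigl((c + \lambda a_T)/c\bigr)$ with $c = 2\beta$, $\lambda = \alpha/(2\sigma^2)$, and $a_t = D_t'$ yields $\sum_t \eta_t \sigma_t^2 \leq \tfrac{2\sigma^2}{\alpha}\log\!\bigl(1 + \tfrac{\alpha D_T'}{4\beta\sigma^2}\bigr) = O\bigl(\log(1 + D_T')\bigr)$, and combining with the $O(1)$ initial term delivers the claim.

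The main obstacle I expect is the second paragraph: the specific coefficient $\alpha/(2\sigma^2)$ in the definition of $\eta_t$ is precisely tuned so that the Abel-summation increments $\tfrac{1}{\eta_t} - \tfrac{1}{\eta_{t-1}}$ scale like the per-round prediction error, and the factor $\alpha/2$ emerging after using $\sigma_{t-1}^2 \leq \sigma^2$ matches the strong-convexity pre-factor after one triangle-inequality step. Simultaneously respecting the upper bound $\eta_t \leq 1/(2\beta)$ — so that the smoothness-based expansion of the gradient error is dominated by the negative $\norm{x_t - y_{t-1}}^2$ slack — is what ties all the constants together; once this delicate matching is in place, the final $\log(1 + D_T')$ rate follows from a routine integral comparison.
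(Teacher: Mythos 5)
Your overall strategy is the one the paper uses (the paper routes it through Lemma \ref{lemma:OptCMD_strong_general} with a general Bregman divergence and then specializes to the Euclidean case, but the mechanics are identical): telescope the Bregman terms, identify $\tfrac{1}{\eta_t}-\tfrac{1}{\eta_{t-1}}=\tfrac{\alpha}{2\sigma^2}\norm{\nabla s_{t-1}(y_{t-2})-\nabla\hat{s}_{t-1}(y_{t-2})}_*^2$, cancel the resulting terms against the strong-convexity term via a triangle inequality, absorb the smoothness cross-term using $\eta_t\leq 1/(2\beta)$, and finish with a logarithmic summation. However, two steps fail as literally written. First, the absorption in your second paragraph requires $2\eta_t\beta^2+\tfrac{\alpha}{2}\leq\tfrac{1}{2\eta_t}$, and at any round with $D'_{t-1}=0$ (in particular $t=1$, or whenever all predictions so far were perfect) you have $\eta_t=\tfrac{1}{2\beta}$, so $2\eta_t\beta^2=\tfrac{1}{2\eta_t}=\beta$ exactly and there is \emph{no} residual slack in $-\tfrac{1}{2\eta_t}\norm{x_t-y_{t-1}}^2$ left to absorb $\tfrac{\alpha}{2}\norm{x_t-y_{t-1}}^2$; the same deficiency persists (by a constant factor) whenever $D'_{t-1}$ is comparable to the single increment $\norm{\nabla s_{t-1}(y_{t-2})-\nabla\hat{s}_{t-1}(y_{t-2})}_*^2$. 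The paper avoids this by pairing the Abel increment with the \emph{other} negative term, $-\tfrac{1}{\eta_t}\B_h(y_t,x_t)$ (the second proximal step of the same round), which is not consumed by the smoothness absorption and for which only $\tfrac{\alpha}{2}\leq\tfrac{1}{2\eta_t}$ is needed; this is exactly the content of Assumption \ref{ass:bregman_reg} verified with $\lambda=\alpha/2$ at the end of the paper's proof. Your argument can be repaired the same way: keep the unused quarter of $-\B_h(y_t,x_t)$ (or $-\B_h(y_{t-1},x_{t-1})$ after re-indexing the triangle inequality toward $x_{t-1}$) rather than leaning on the already-spent $\norm{x_t-y_{t-1}}^2$ slack.

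Second, the ``standard integral estimate'' you invoke is stated in the wrong direction: with the denominator evaluated at the left endpoint $a_{t-1}$, the sum $\sum_t\tfrac{a_t-a_{t-1}}{c+\lambda a_{t-1}}$ is a left Riemann sum of a decreasing integrand and therefore \emph{over}estimates $\tfrac{1}{\lambda}\log\bigl((c+\lambda a_T)/c\bigr)$ (take a single term to see the reversal). The conclusion still holds up to constants because the increments are bounded, $a_t-a_{t-1}\leq\sigma^2$ by Assumption \ref{ass:regularity}, so $\tfrac{c+\lambda a_t}{c+\lambda a_{t-1}}\leq 1+\tfrac{\alpha}{4\beta}$; alternatively, do what the paper does: first replace $\eta_t$ by $\eta_{t+1}$ at an additive cost of $\sigma^2(\eta_1-\eta_{T+1})\leq\tfrac{\sigma^2}{2\beta}$, and then apply the right-endpoint estimate $\eta_{t+1}\bigl(\tfrac{1}{\eta_{t+1}}-\tfrac{1}{\eta_t}\bigr)\leq\log\bigl(\eta_t/\eta_{t+1}\bigr)$ (Lemma \ref{lemma:log_bound}), which telescopes cleanly to $\log(\eta_1/\eta_{T+1})=\log\bigl(1+\tfrac{\alpha}{4\beta\sigma^2}D'_T\bigr)$. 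Both issues are local and fixable, but as written neither step goes through.
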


\begin{remark}[Generality of bound for strongly convex costs]\label{remark:OptCMD_strong}
Employing a similar line of argument as in Remark \ref{remark:OptCMD_convex}, we state two observations. 
With perfect gradient predictions, inequality \eqref{eq:stat_reg_inequality_SC} becomes $\Regret_T \leq O(1)$, again recovering the result of Ho-Nguyen and Kılınç-Karzan \cite{ho2019exploiting}. Moreover, the optimal regret bound $\Regret_T \leq O(\log(T))$ is also recovered in the worst-case.
\end{remark}

\subsection{Dynamic Environments}

As previously mentioned, when working in dynamic environments, we would like to exploit gradient predictions, function predictions, and \textit{knowledge of reference sequence dynamics}. Thus, in this scenario, we propose the \emph{Optimistic Dynamic Composite Mirror Descent} (OptDCMD) algorithm
\begin{align}  \label{alg:OptDCMD}
\tag{OptDCMD}
\begin{aligned}
x_t &= \text{arg}\min_{x\in\X} \ \big\{ \eta_t \inner{\nabla \hat{s}_t(y_{t-1})}{x} + \eta_t \hat{r}_t(x)  + \B_h(x,y_{t-1}) \big\} \\
\tilde{y}_t &= \text{arg}\min_{y\in\X} \ \big\{ \eta_t \inner{\nabla s_t(x_t)}{y} + \eta_t r_t(y)  + \B_h(y,y_{t-1}) \big\} \\
y_t &= \Phi_t(\tilde{y}_t).
\end{aligned}
\end{align}
This algorithm can be viewed as a combination of Algorithm \ref{alg:OptCMD} and the DMD algorithm of Hall and Willett \cite{hall2013dynamical}. To the best of our knowledge, no result in the literature has presented a regret analysis of an algorithm that combines gradient predictions, function predictions, and knowledge about the dynamics of the reference sequence. In what follows, we assume that the Player has access to dynamical models $\Phi_t$ of $\{u_t\}_{t=1}^T$. Let us further make the following assumptions.
\begin{assumption}[Lipschitz-likeness of $\B_h$]  \label{ass:lipschtz_breg}
For all $x,y,z \in \X$, there exist a scalar $\gamma > 0$ such that the Bregman divergence satisfies the Lipschitz-like condition $\B_h(x,z) - \B_h(y,z) \leq \gamma \norm{x-y}$.
\end{assumption}

\begin{remark}[Mildness of Assumption  \ref{ass:lipschtz_breg}]
It follows that Assumption \ref{ass:lipschtz_breg} holds when the mapping $h$ is Lipschitz on $\X$ \cite{jadbabaie2015online}, which is a mild assumption once $\mathcal{X}$ is usually a compact set. Some examples are $h(x) = \frac{1}{2} \|x\|^2_2$ (i.e., the euclidean case), or the ``KL divergence case'' [18].
\end{remark}

\begin{assumption}[Non-expansiveness of $\Phi_t$] \label{ass:nonexpansiveness_Phi}
For all $x,y \in \X$ and $\B_h$, the mapping $\Phi_t$ is non-expansive, that is, $\B_h(\Phi_t(x),\Phi_t(y)) - \B_h(x,y) \leq 0$.
\end{assumption}
\begin{remark}[Necessity of Assumption \ref{ass:nonexpansiveness_Phi}]
Observe that Assumption \ref{ass:nonexpansiveness_Phi} is a restriction on the class of dynamical models $\Phi_t$.
The reason behind this assumption is to control the impact of a possibly unreliable prediction (made by the use of $\Phi_t$), as the online game progresses \cite{hall2015online, shahrampour2017distributed}.
\end{remark}

We now present a dynamic regret bound for the Algorithm \ref{alg:OptDCMD}. In the results that follow, by abuse of notation, we use $V_t' := \sum_{\tau=1}^t | r_\tau(x_\tau) - \hat{r}_\tau(x_\tau) + \hat{r}_\tau(\tilde{y}_\tau) - r_\tau(\tilde{y}_\tau)|$.

\begin{theorem}[Dynamic regret: convex costs]\label{theorem:OptDCMD_convex}
Suppose that assumptions \ref{ass:regularity}, \ref{ass:lipschtz_breg} and \ref{ass:nonexpansiveness_Phi} hold. Define the adaptive step-size
$$
\eta_1 = \frac{1}{2\beta}, \quad \eta_t = \left( 4\beta^2 + \left(V_{t-1}' \right)^2 + D_{t-1}' \right)^{-\frac{1}{2}}
$$
for all $t>1$. Then, Algorithm \eqref{alg:OptDCMD} guarantees
\begin{equation}
    \label{eq:bound_thr_dyna_1}
    \begin{aligned}
        \DRegret_T \leq O\bigg((1 + C'_T )\Big(1 + \sqrt{D_T'} + \min\left\{ V_T', \sqrt{(1 + C'_T) T} \right\}\Big)\bigg).
    \end{aligned}
\end{equation}
\end{theorem}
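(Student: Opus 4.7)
The plan is to adapt the proof of Theorem \ref{theorem:OptCMD_convex} to the dynamic setting, using the dynamical models $\Phi_t$ together with Assumptions \ref{ass:lipschtz_breg} and \ref{ass:nonexpansiveness_Phi} to control the drift of the reference sequence $\{u_t\}_{t=1}^T$. First, I would write the per-round optimality conditions of both subproblems in \ref{alg:OptDCMD} in three-point form: for every $u \in \X$,
\begin{align*}
\eta_t \bigl(\inner{\hnabla s_t(y_{t-1})}{x_t - u} + \hat{r}_t(x_t) - \hat{r}_t(u)\bigr) &\leq \B_h(u, y_{t-1}) - \B_h(u, x_t) - \B_h(x_t, y_{t-1}),\\
\eta_t \bigl(\inner{\nabla s_t(x_t)}{\tilde{y}_t - u} + r_t(\tilde{y}_t) - r_t(u)\bigr) &\leq \B_h(u, y_{t-1}) - \B_h(u, \tilde{y}_t) - \B_h(\tilde{y}_t, y_{t-1}).
\end{align*}
Setting $u = u_t$, convexity of $s_t$ and algebra with $\pm\inner{\hnabla s_t(y_{t-1})}{x_t - \tilde{y}_t}$, $\pm \hat{r}_t(x_t)$ and $\pm \hat{r}_t(\tilde{y}_t)$ gives a per-round upper bound on $f_t(x_t) - f_t(u_t)$ in which the only ``uncontrolled'' terms are the inner product $\inner{\nabla s_t(x_t) - \hnabla s_t(y_{t-1})}{x_t - \tilde{y}_t}$ and the function mismatch $r_t(x_t) - \hat{r}_t(x_t) + \hat{r}_t(\tilde{y}_t) - r_t(\tilde{y}_t)$; these are precisely the summands of $D'_T$ and $V'_T$.

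Next, I would absorb the $\beta$-smooth gradient drift. By $\beta$-smoothness, $\norm{\nabla s_t(x_t) - \hnabla s_t(y_{t-1})}_* \leq \beta \norm{x_t - y_{t-1}} + \norm{\nabla s_t(y_{t-1}) - \hnabla s_t(y_{t-1})}_*$, and applying Young's inequality together with the $1$-strong convexity of $h$ (which yields $\B_h(x_t, y_{t-1}) \geq \tfrac{1}{2}\norm{x_t - y_{t-1}}^2$ and similarly for $\tilde{y}_t$) allows the $\beta$-part to be absorbed into the negative Bregman terms, provided $\eta_t \leq 1/(2\beta)$, which is built into the adaptive rule. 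What remains on the right-hand side of each round is $O\bigl(\eta_t \norm{\nabla s_t(y_{t-1}) - \hnabla s_t(y_{t-1})}_*^2\bigr)$ plus the function-prediction residual.

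The key dynamic-environment step handles the failure of telescoping: the bound at round $t$ leaves $\B_h(u_t, \tilde{y}_t)$, whereas round $t+1$ starts from $\B_h(u_{t+1}, y_t)$. Using $y_t = \Phi_t(\tilde{y}_t)$ and Assumption \ref{ass:nonexpansiveness_Phi}, $\B_h(\Phi_t(u_t), y_t) = \B_h(\Phi_t(u_t), \Phi_t(\tilde{y}_t)) \leq \B_h(u_t, \tilde{y}_t)$, which moves the reference from $u_t$ to $\Phi_t(u_t)$ for free. Assumption \ref{ass:lipschtz_breg} then absorbs the last mismatch: $\B_h(u_{t+1}, y_t) - \B_h(\Phi_t(u_t), y_t) \leq \gamma\norm{u_{t+1} - \Phi_t(u_t)}$. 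Summing over $t$ converts the leftover Bregman differences into $R^2 + \gamma C'_T$, so the role played in the static proof by the constant $R^2$ is now played by $O(1 + C'_T)$.

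The last step mirrors the static proof. With $\eta_t = (4\beta^2 + (V'_{t-1})^2 + D'_{t-1})^{-1/2}$, the standard ``$a_t/\sqrt{\sum_{\tau \leq t} a_\tau}$'' telescoping lemma yields $\sum_t \eta_t \norm{\nabla s_t(y_{t-1}) - \hnabla s_t(y_{t-1})}_*^2 \leq O(\sqrt{D'_T})$ and converts the residual function-mismatch sum into either $O(V'_T)$ (direct) or $O(\sqrt{T})$ (worst-case via $V'_T \leq O(T)$), while the Bregman budget $\sum_t (R^2 + \gamma C'_T)/\eta_t$ multiplies both estimates by an extra $(1+C'_T)$ factor. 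Taking the better of the two bounds inside the $\min$ yields \eqref{eq:bound_thr_dyna_1}. The main obstacle is this last step: achieving the $\min\{V'_T, \sqrt{(1+C'_T)T}\}$ form requires the case-analysis argument of \cite{campolongo2020temporal} run inside the dynamic accounting, because $V'_T$ may be large and the adaptive schedule must remain compatible with the enlarged Bregman budget $R^2 + \gamma C'_T$ — essentially one re-executes the Campolongo--Orabona argument with $R^2$ replaced by $R^2 + \gamma C'_T$ throughout.
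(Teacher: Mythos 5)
Your proposal follows essentially the same route as the paper's proof: the same $A_t+B_t+C_t$ decomposition via the three-point mirror descent inequality, the same absorption of the $\beta$-smooth part into the negative Bregman terms under $\eta_t\leq 1/(2\beta)$, the same use of Assumptions \ref{ass:nonexpansiveness_Phi} and \ref{ass:lipschtz_breg} to repair the telescoping and produce the $R^2+\gamma C_T'$ budget, and the same re-execution of the Campolongo--Orabona case analysis with the constant enlarged by $\gamma C_T'$. The only nitpick is notational: the Bregman budget after telescoping is $(R^2+\gamma C_T')/\eta_T$, not $\sum_t (R^2+\gamma C_T')/\eta_t$, but your surrounding text makes clear you intend the telescoped quantity.
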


\begin{remark}[Comparison with literature]
\label{remark:theorem_OptDCMD_convex}
Lets consider the case when $r_t(x) = 0 \ \forall x \in \X$, i.e., $V_T' = 0$. Observe that when $\Phi_t$ approximates the true dynamics of the comparator sequence $\{u_t\}_{t=1}^T$, we may have $C_T' \leq C_T$. Moreover, we also recover $C_T' = C_T$ if we choose $\Phi_t$ as the identity map. Therefore, compared to the bound $\DRegret_T \leq O((C_T+1)\sqrt{D_T + 1})$ of Jadbabaie et al. \cite{jadbabaie2015online}, our result improves it in the sense that it is given in terms of $C'_T$ and $D'_T$, instead of $C_T$ and $D_T$ (recall the discussion of Subsection \ref{subsec:DT_problem}). Moreover, recall that we have $\norm{\nabla s_t(y_{t-1}) - \nabla \hat{s}_t(y_{t-1})}_* \leq \sigma$ by Assumption \ref{ass:regularity}. Hence, it follows that ${O(\sqrt{1 + D'_T}(1 + C'_T)) = O(\sqrt{T}(1 + C'_T))}$ in the worst-case, and we recover the bound of Hall and Willett \cite{hall2013dynamical}. However, Zhang et al. \cite{zhang2018adaptive} proposed an algorithm called \emph{Ader}, which achieves the \emph{optimal} bound ${\DRegret_T \leq O\left(\sqrt{T(1 + C'_T)}\right)}$. Thus, in the worst-case, our regret bound does not recover the optimal one. Comparing \eqref{eq:bound_thr_dyna_1} with the $O(\min\{V_T,\sqrt{T(1 + \tau)}\})$ dynamic regret bound of Campolongo and Orabona \cite{campolongo2021closer}, where $\tau$ is a known upper bound of $C_T$, we see that \eqref{eq:bound_thr_dyna_1} has worst dependence of $C_T'$. This is mainly due to the fact that, in order to exploit gradient prediction, we need to have $\eta_t \leq 1/(2\beta)$. Since in \cite{campolongo2021closer} a fully implicit algorithm is used (see Algorithm \ref{alg:IOMD}), the step size can depend linearly on $\tau$, in other words, it can be as large as necessary.
\end{remark}

In the next theorem, we show that if the component $s_t = 0$, that is, $f_t = r_t$, we achieve a bound that generalizes the one from \cite{campolongo2021closer} using function predictions, i.e., using $V_T'$ instead of $V_T$. Notice that in this case, the updates of Algorithm \ref{alg:OptDCMD} are fully implicit updates, just like in Algorithm \ref{alg:IOMD}.

\begin{theorem}[Dynamic regret: implicit updates]\label{theorem:OptDCMD_implicit}
Suppose that assumptions \ref{ass:regularity}, \ref{ass:lipschtz_breg} and \ref{ass:nonexpansiveness_Phi} hold. Furthermore, let $s_t = 0$ for all $t$, and $\tau$ be an upper bound of $C'_T$. Define the adaptive step-size
$$
\eta_t = \frac{\tau}{V_{t-1}'}.
$$
for all $t>1$. Then, Algorithm \eqref{alg:OptDCMD} guarantees
\begin{equation}
    \label{eq:bound_thr_implicit}
    \begin{aligned}
        \DRegret_T \leq O \left( \min \left\{ V_T', \ \sqrt{(1 + \tau)T} \right\} \right).
    \end{aligned}
\end{equation}
\end{theorem}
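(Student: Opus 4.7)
The plan is to extend the dynamic-regret analysis for implicit mirror descent (in the spirit of Campolongo and Orabona) to accommodate function predictions $\hat{r}_t$ and a dynamical model $\Phi_t$. Since $s_t=0$, both updates reduce to proximal (implicit) steps: $x_t$ minimizes $\eta_t\hat{r}_t(x)+\B_h(x,y_{t-1})$, while $\tilde{y}_t$ minimizes $\eta_t r_t(y)+\B_h(y,y_{t-1})$, and $y_t=\Phi_t(\tilde{y}_t)$. The dynamic regret is $\sum_t r_t(x_t)-\sum_t r_t(u_t)$, and the goal is to bound it simultaneously by $V_T'$ and by $\sqrt{(1+\tau)T}$.

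First I would derive the per-round inequality. Applying the three-point lemma for the implicit update of $\tilde{y}_t$ with test point $u_t$ gives $r_t(\tilde{y}_t)-r_t(u_t)\leq \frac{1}{\eta_t}[\B_h(u_t,y_{t-1})-\B_h(u_t,\tilde{y}_t)-\B_h(\tilde{y}_t,y_{t-1})]$. Applying the same lemma to $x_t$ with test point $\tilde{y}_t$, then adding and subtracting $\hat{r}_t$, gives $r_t(x_t)-r_t(\tilde{y}_t)\leq e_t+\frac{1}{\eta_t}[\B_h(\tilde{y}_t,y_{t-1})-\B_h(\tilde{y}_t,x_t)-\B_h(x_t,y_{t-1})]$, where $e_t:=r_t(x_t)-\hat{r}_t(x_t)+\hat{r}_t(\tilde{y}_t)-r_t(\tilde{y}_t)$ satisfies $\sum_t|e_t|\leq V_T'$. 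Summing the two inequalities and dropping non-positive Bregman terms yields the clean bound $r_t(x_t)-r_t(u_t)\leq e_t+\frac{1}{\eta_t}[\B_h(u_t,y_{t-1})-\B_h(u_t,\tilde{y}_t)]$.

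Next I would handle the dynamical push-forward. By Assumption \ref{ass:nonexpansiveness_Phi}, $\B_h(\Phi_t(u_t),\Phi_t(\tilde{y}_t))\leq \B_h(u_t,\tilde{y}_t)$, and by Assumption \ref{ass:lipschtz_breg}, $\B_h(u_{t+1},y_t)\leq \B_h(\Phi_t(u_t),y_t)+\gamma\|u_{t+1}-\Phi_t(u_t)\|$; combined, these give $\B_h(u_{t+1},y_t)-\B_h(u_t,\tilde{y}_t)\leq \gamma c_t$ with $\sum_t c_t= C_T'\leq \tau$. Summing the per-round bound and applying Abel summation-by-parts to the telescoping Bregman terms, using that $\eta_t=\tau/V_{t-1}'$ is non-increasing in $t$ and $\B_h\leq R^2$, produces $\DRegret_T\leq V_T'+(R^2+\gamma\tau)/\eta_T=V_T'+(R^2+\gamma\tau)V_{T-1}'/\tau$, which is already $O(V_T')$.

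The final and hardest step is to argue the alternative bound $O(\sqrt{(1+\tau)T})$, which is needed when $V_T'$ is large (up to $O(T)$ by Assumption \ref{ass:regularity}). Here I would use a two-regime argument based on the adaptive choice $\eta_t=\tau/V_{t-1}'$: if $V_T'\leq \sqrt{(1+\tau)T}$, the previous display already gives the claim; otherwise $V_{T-1}'\gtrsim \sqrt{(1+\tau)T}$ forces $\eta_t\lesssim \sqrt{\tau/T}$, and one can re-run the telescoping using the Lipschitzness of $r_t$ on the compact set $\X$ (yielding per-round terms of order $\eta_t$ instead of $e_t$) so that the $(R^2+\gamma\tau)/\eta_T$ term and the accumulated per-round subgradient terms both collapse to $O(\sqrt{(1+\tau)T})$. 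The main obstacle is verifying that $\eta_t=\tau/V_{t-1}'$ interpolates cleanly between these two regimes without losing constant factors; once the two bounds are established, taking their minimum gives \eqref{eq:bound_thr_implicit}.
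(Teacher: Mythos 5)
Your first half matches the paper's proof: the per-round decomposition via Lemma \ref{lemma:main_inequality}, the handling of $\Phi_t$ through Assumptions \ref{ass:lipschtz_breg} and \ref{ass:nonexpansiveness_Phi}, and the resulting bound $\DRegret_T \leq O(V_T') $ obtained from $\frac{1}{\eta_T}(R^2+\gamma\tau) = (R^2/\tau+\gamma)V'_{T-1}$ are all exactly what the paper does. The gap is in your argument for the second bound $O(\sqrt{(1+\tau)T})$. Your two-regime split does not work for two reasons. First, in the regime $V'_{T-1} \gtrsim \sqrt{(1+\tau)T}$ you claim the term $(R^2+\gamma\tau)/\eta_T$ ``collapses'' to $O(\sqrt{(1+\tau)T})$; but a small $\eta_T$ makes $1/\eta_T$ \emph{large}: $(R^2+\gamma\tau)/\eta_T = (R^2/\tau+\gamma)V'_{T-1}$, which can be as large as $\Theta(T)$ when $V'_{T-1}=\Theta(T)$. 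Second, the global dichotomy on $V'_T$ gives no control over the intermediate step-sizes: if the prediction errors accumulate late, $V'_{t-1}$ is small (possibly zero) for most $t$, so $\eta_t=\tau/V'_{t-1}$ is huge for most rounds, and ``per-round terms of order $\eta_t$'' (or $\eta_t G_t^2$) do not sum to $O(\sqrt{(1+\tau)T})$. Relatedly, you discard the negative term $-\frac{1}{\eta_t}\B_h(\tilde{y}_t,x_t)$ early on, but it is indispensable for the second bound.

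The missing idea (which the paper imports from its proof of Theorem \ref{theorem:OptCMD_convex}, following Campolongo--Orabona) is a \emph{self-bounding induction} rather than a case split. One keeps the negative Bregman terms and defines $\lambda_T := \sum_{t=1}^T\bigl(|\Delta_t| - \frac{1}{4c\eta_t}\norm{x_t-\tilde{y}_t}^2\bigr)$ with $c=R^2/\tau+\gamma+1$, so that $\DRegret_T \leq c\lambda_T$. Each increment satisfies the \emph{pointwise minimum} bound $\lambda_t-\lambda_{t-1} \leq \min\{\sqrt{2}RG_t,\; c\eta_t G_t^2\}$ (the first branch, using $\norm{x_t-\tilde y_t}\le\sqrt{2}R$, is what saves you on rounds where $\eta_t$ is huge). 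Expanding $\lambda_T^2=\sum_t\bigl((\lambda_t-\lambda_{t-1})^2+2(\lambda_t-\lambda_{t-1})\lambda_{t-1}\bigr)$ and using the key identity $\eta_t\lambda_{t-1}\leq \eta_t V'_{t-1}=\tau$ yields $\lambda_T^2\leq (2R^2+2\tau c)\sum_t G_t^2 = O((1+\tau)T)$, hence $\DRegret_T\leq O(\sqrt{(1+\tau)T})$ unconditionally; the stated bound is then the minimum of the two estimates. Without this induction (or an equivalent mechanism that couples $\eta_t$ with the \emph{running} value of $\lambda_{t-1}$ at every round), your regime-2 argument cannot be completed.
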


As mentioned in Remark \ref{remark:theorem_OptDCMD_convex}, the Ader algorithm of Zhang et al. \cite{zhang2018adaptive} guarantees the optimal worst-case dynamic regret bound of ${\DRegret_T \leq O\left(\sqrt{T(1 + C'_T)}\right)}$, without prior knowledge of $C'_T$ or an upper bound on it. In order to achieve this bound, an expert-tracking algorithm based on Online Gradient Descent (OGD) updates is used. In our final result, we show that by using a step-size that adapts to $C'_t$, a similar regret bound can be achieved while also exploiting gradient predictions.

\begin{theorem}[Dynamic regret: fully adaptive step-size]
\label{theorem:OptDMD_convex_2}
Suppose that assumptions \ref{ass:regularity}, \ref{ass:lipschtz_breg} and \ref{ass:nonexpansiveness_Phi} hold. Furthermore, assume have access to $r_t$, thus, we can choose $\hat{r}_t = r_t$. Set the adaptive step-size to $\eta_1 = \eta_2 = 1/(2\beta)$ and 
$$
\eta_t = \sqrt{\frac{C'_{t-2} + 1}{D'_{t-1} + \theta_t}}
$$ 
for $t>2$, where $\theta_t$ is chosen such that $\eta_t \leq \eta_{t-1} \leq \frac{1}{2\beta}$ and $\theta_t \geq \theta_{t-1}$ for all $t$.
In this scenario, Algorithm \ref{alg:OptDCMD} guarantees
\begin{equation}
\label{eq:dyn_reg_inequality_2}
\DRegret_T \leq O\left(\sqrt{(\theta_T + D'_T)(1 + C'_T)}\right).
\end{equation}
\end{theorem}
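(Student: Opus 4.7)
The plan is to adapt the per-round inequality already at the heart of Theorem \ref{theorem:OptDCMD_convex} to the simpler setting $\hat{r}_t = r_t$ (so the $V_t'$ contribution vanishes) and then handle the new adaptive step-size carefully. Concretely, starting from the two mirror-descent updates in \eqref{alg:OptDCMD}, using convexity of $s_t$ and $r_t$, the optimality conditions for $x_t$ and $\tilde{y}_t$, and $1$-strong convexity of $h$, I would first derive the round-$t$ estimate
\begin{equation*}
f_t(x_t) - f_t(u_t) \le \tfrac{1}{\eta_t}\bigl[\B_h(u_t,y_{t-1}) - \B_h(u_t,\tilde{y}_t)\bigr] + \tfrac{\eta_t}{2}\|\nabla s_t(x_t) - \nabla \hat{s}_t(y_{t-1})\|_*^2 - \tfrac{1}{2\eta_t}\|x_t - y_{t-1}\|^2 .
\end{equation*}
Splitting the gradient difference via $\beta$-smoothness into the ``at $y_{t-1}$'' prediction error $d_t := \|\nabla s_t(y_{t-1}) - \nabla \hat{s}_t(y_{t-1})\|_*^2$ plus $2\beta^2\|x_t-y_{t-1}\|^2$, and invoking $\eta_t \le 1/(2\beta)$, the last negative term absorbs the $\|x_t-y_{t-1}\|^2$ contribution and leaves $f_t(x_t)-f_t(u_t) \le \tfrac{1}{\eta_t}[\B_h(u_t,y_{t-1}) - \B_h(u_t,\tilde{y}_t)] + \eta_t\, d_t$.

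Next I would telescope the Bregman part exactly as in Theorem \ref{theorem:OptDCMD_convex}, inserting $\B_h(u_{t+1},y_t)$ and using $y_t = \Phi_t(\tilde{y}_t)$ together with assumptions \ref{ass:lipschtz_breg}--\ref{ass:nonexpansiveness_Phi} to obtain
\begin{equation*}
\B_h(u_{t+1},y_t) - \B_h(u_t,\tilde{y}_t) \le \gamma\|u_{t+1} - \Phi_t(u_t)\|.
\end{equation*}
Because $1/\eta_t$ is non-decreasing by the required monotonicity of $\eta_t$, the $\B_h(u_t,y_{t-1}) - \B_h(u_{t+1},y_t)$ piece telescopes to at most $R^2/\eta_T$, while the dynamics piece contributes at most $\gamma\, C'_T/\eta_T$. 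Summing the per-round inequality and using $1/\eta_T = \sqrt{(D'_{T-1}+\theta_T)/(C'_{T-2}+1)}$ yields
\begin{equation*}
\DRegret_T \le \sum_{t=1}^T \eta_t\, d_t + (R^2 + \gamma C'_T)\sqrt{\tfrac{D'_{T-1}+\theta_T}{C'_{T-2}+1}} + O(1).
\end{equation*}

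The final step is to control the adaptive sum $\sum_t \eta_t d_t$. Using $\eta_t = \sqrt{(C'_{t-2}+1)/(D'_{t-1}+\theta_t)}$ and the monotonicity in $t$ of $C'_{t-2}$, $D'_{t-1}$, and $\theta_t$, I bound
\begin{equation*}
\sum_{t=1}^T \eta_t\, d_t \le \sqrt{1 + C'_T}\,\sum_{t=1}^T \frac{d_t}{\sqrt{D'_{t-1} + \theta_t}} \le 2\sqrt{(1+C'_T)(D'_T + \theta_T)},
\end{equation*}
where the last inequality is the standard ``$\sum_t a_t/\sqrt{A_{t-1}+a_t} \le 2\sqrt{A_T}$'' telescoping identity (here using the non-decreasing nature of $\theta_t$ to shift the offset inside the square root). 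For the Bregman contributions, the $R^2/\eta_T$ term is immediately $O(\sqrt{(1+C'_T)(D'_T+\theta_T)})$ since $C'_{T-2}+1 \ge 1$, and the $\gamma C'_T/\eta_T$ term is handled by noting that Assumption \ref{ass:regularity} (via $\B_h \le R^2$ and $1$-strong convexity of $h$) gives a uniform bound $\|u_{t+1} - \Phi_t(u_t)\| \le \sqrt{2}R$, whence $C'_T \le C'_{T-2} + 2\sqrt{2}R$ and $C'_T/\sqrt{C'_{T-2}+1} = O(\sqrt{1+C'_T})$. Combining these estimates yields \eqref{eq:dyn_reg_inequality_2}.

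The main obstacle I expect is precisely the last bookkeeping step: reconciling the off-by-two index mismatch (the step-size must be causal and thus uses $C'_{t-2}$, whereas the regret naturally appears in terms of $C'_T$) without introducing a factor that degrades the bound. This relies crucially on the boundedness of a single comparator jump inherited from $\B_h \le R^2$, and on verifying that the prescribed $\theta_t$ can indeed enforce $\eta_t \le \eta_{t-1} \le 1/(2\beta)$ without growing so fast that the $\sqrt{\theta_T}$ term dominates the bound.
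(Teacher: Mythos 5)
Your proposal is correct and follows essentially the same route as the paper's proof: the same per-round optimistic mirror-descent decomposition, absorption of the smoothness term via $\eta_t\le 1/(2\beta)$, telescoping of the Bregman terms using Assumptions \ref{ass:lipschtz_breg} and \ref{ass:nonexpansiveness_Phi}, and an adaptive-step-size summation bound of the type in Lemma \ref{lemma:sqrt_bound_2}. The only (cosmetic) difference is in the index bookkeeping for the dynamics term: the paper shifts $1/\eta_t$ to $1/\eta_{t+2}$ by two Abel summations and then applies Lemma \ref{lemma:sqrt_bound_2}, whereas you pull out $1/\eta_T$ and use the bounded single-step increment $C'_T\le C'_{T-2}+O(R)$ — both are valid, and your use of $\eta_t$ rather than $\eta_{t+1}$ in $\sum_t\eta_t d_t$ is harmless since $d_t\le\sigma^2$ and $D'_{t-1}+\theta_t\ge 4\beta^2$ keep the resulting ratio bounded by a constant.
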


\begin{remark}[Comments on $\theta_t$]
\label{remark:theta}
From the definition of the step-size used in Theorem \ref{theorem:OptDMD_convex_2}, we notice that the more the reference sequence $\{u_t\}_{t=1}^T$ varies (i.e. the bigger $C'_t$ is), the larger $\eta_t$ should be. Intuitively, we need larger step-sizes to ``track'' a reference sequence that changes a lot. On the other hand, in order to exploit gradient prediction, we also need $\eta_t \leq 1/(2\beta)$. Thus, $\theta_t$ can be interpreted as a trade-off parameter, which must be big enough so that $\eta_t \leq 1/(2\beta)$, but also not too big so that the algorithm is not able to ``track'' $\{u_t\}_{t=1}^T$. Also notice that, in the case of perfect gradient predictions (i.e. $D'_T = 0$), the regret bound of Theorem \ref{theorem:OptDMD_convex_2} becomes $\DRegret_T \leq O\left(1 + C'_T\right)$, since in this scenario we need $\theta_t = O\left(1 + C'_t\right)$ in order to guarantee that $\eta_t \leq 1/(2\beta)$. This dynamic regret bound is similar to the one presented in \cite{mokhtari2016online}, where the authors do not use any kind of gradient predictions, but assume \emph{strongly convex} costs and a specific reference sequence defined as $u_t = \emph{arg}\min_{x \in \X} f_t(x)$.
\end{remark}

In Theorem \ref{theorem:OptDMD_convex_2}, notice that feedback about $\|u_t - \Phi_{t-1}(u_{t-1})\|$ after round $t$ is necessary to implement the proposed step-size $\eta_t$. Although this information may not be available in the most general case of arbitrary costs $f_t$ and reference sequence $u_t$, it is reasonable to assume this type of feedback in many applications. For example, in the case where the reference sequence is a fixed point (and the dynamic regret reduces to static regret), the feedback assumption trivially holds since in this case $\|u_t - u_{t-1}\| = 0$. Another example is the case of quadratic costs (see experimental results in \cite{shahrampour2017distributed,mokhtari2016online}), which is ubiquitous in control applications. In this case, the gradient feedback $\nabla f_t$ constrains the information about $u_t$, thus, our step-sizes can be implemented. Finally, another common case is when $u_t = \text{arg}\min_{x \in \X} f_t(x)$. When the cost $f_t$ is revealed after round $t$, its optimizer can be computed, and again our step-sizes $\eta_t$ can be implemented, although it may be computationally expensive to do so.

Differently from the approach proposed in Theorem \ref{theorem:OptDMD_convex_2}, algorithms based on the doubling-trick or experts have been proposed as a way to adapt to $C'_T$ without knowing it in advance \cite{jadbabaie2015online, zhang2018adaptive, campolongo2021closer}. We leave it as an open question whether or not these tools can be used to prove tighter dynamic regret bounds when using gradient and/or function predictions. Moreover, our regret bounds can serve as the basis for the design and analyses of algorithms that learn gradient/function predictors and minimize regret simultaneously. For instance, in order to learn good predictors, it may be necessary to explore the action space by playing actions perturbed by some noise. This strategy may lead to regret bounds that depend on the prediction error (i.e, $D'_T$ and/or $V'_T$) and terms that depend on the perturbation noise. Studying the trade-off between exploration (playing perturbed action to learn good predictors and minimize $D'_T$ and/or $V'_T$) and exploitation (playing actions with low noise) is an interesting future work direction.

\section{Technical Proofs}
\label{sec:proofs}

We start this section with some auxiliary lemmas which will be useful in the proofs of our main results.

\subsection{Auxiliary Lemmas}

The following lemma is a straightforward generalization of the standard mirror descent inequality and is stated without proof.

\begin{lemma}
\label{lemma:main_inequality}
Suppose that $\X$ is a closed convex set. Let $\varphi : \X \to \Real$ be a convex function and $\eta >0$. Define
\begin{align*}
u := \arg\min_{x\in\X} \left\{ \eta \varphi(x) + \B_h(x,v) \right\}.
\end{align*}
It follows that, for all $ z \in \X$ and $g(u) \in \partial \varphi(u)$,
\begin{align*}
\eta\inner{g(u)}{u-z} \leq \B_h(z,v) - \B_h(z,u) - \B_h(u,v).
\end{align*}
\end{lemma}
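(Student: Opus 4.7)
The plan is to combine the first-order optimality condition for the convex subproblem defining $u$ with the classical three-point identity for the Bregman divergence.

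First, I would observe that the objective $F(x) := \eta\varphi(x) + \B_h(x,v)$ is convex in $x$, since $\varphi$ is convex and $\B_h(\cdot,v)$ is convex as a consequence of the convexity of $h$ (cf.\ Definition~\ref{def:bregman}). Because $u$ minimizes $F$ over the closed convex set $\X$, the necessary and sufficient optimality condition reads
\begin{equation*}
0 \in \eta\, \partial\varphi(u) + \nabla h(u) - \nabla h(v) + N_\X(u),
\end{equation*}
where $N_\X(u)$ denotes the normal cone of $\X$ at $u$. This guarantees the existence of a subgradient $g(u) \in \partial\varphi(u)$ such that, for every $z \in \X$,
\begin{equation*}
\eta\, \inner{g(u)}{u-z} \;\leq\; \inner{\nabla h(u) - \nabla h(v)}{z - u}.
\end{equation*}

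Next, I would invoke the three-point identity for the Bregman divergence. Expanding $\B_h(z,v)$, $\B_h(z,u)$, and $\B_h(u,v)$ directly from Definition~\ref{def:bregman}, the $h(z)$ terms cancel and a one-line rearrangement produces the purely algebraic equality
\begin{equation*}
\B_h(z,v) - \B_h(z,u) - \B_h(u,v) \;=\; \inner{\nabla h(u) - \nabla h(v)}{z - u}.
\end{equation*}
Chaining this identity with the variational inequality above delivers the claim.

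I expect no genuine obstacle here: the argument consists of a standard variational inequality plus one line of Bregman algebra, which is precisely why the authors state the result without proof. The only mild subtlety concerns the quantifier over $g(u)$—the bound is witnessed by the particular subgradient produced by the optimality condition, so in the nonsmooth case the statement is most naturally read as ``there exists $g(u) \in \partial\varphi(u)$ such that\ldots'', which matches the standard convention in the mirror-descent literature and specializes to the unique choice $g(u) = \nabla\varphi(u)$ when $\varphi$ is differentiable at $u$.
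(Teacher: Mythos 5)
The paper states this lemma without proof (calling it a straightforward generalization of the standard mirror descent inequality), and your argument---first-order optimality over the closed convex set $\X$ combined with the three-point identity $\B_h(z,v) - \B_h(z,u) - \B_h(u,v) = \inner{\nabla h(u) - \nabla h(v)}{z-u}$---is exactly the standard derivation the authors are invoking, and it is correct. Your closing observation about the quantifier is also apt: the inequality is witnessed by the particular subgradient produced by the optimality condition rather than holding for an arbitrary element of $\partial\varphi(u)$, which is consistent with how the paper later applies the lemma to specific subgradients $g_t(y_t)\in\partial r_t(y_t)$.
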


The next lemma relates the proximal gradient updates (e.g. as in Algorithm \ref{alg:OptCMD}) with the gradients of the linearized components.
\begin{lemma} \label{lemma:OptCMD_diff}
Suppose that $\X$ is a closed convex set in a Banach space $\mathbb{S}$ equipped with a norm $\norm{\cdot}$. Let $h$ be 1-strongly convex w.r.t. $\|\cdot\|$. Let $w_1,w_2\in\mathbb{S}^*$, $v\in\X$, $r: \X \to \Real$ is a convex function and $\eta >0$. Define
$$
u_1 := \arg \min_{x_1\in\X} \Big\{ \inner{w_1}{x_1} + r(x_1) + \frac{1}{\eta}\B_h(x_1,v) \Big\},
$$
$$
u_2 := \arg \min_{x_2\in\X} \Big\{ \inner{w_2}{x_2} + r(x_2)  + \frac{1}{\eta}\B_h(x_2,v) \Big\}.
$$
Then, it holds that
\begin{align*}
\norm{u_1 - u_2} \leq \eta \norm{w_1 - w_2}_*.
\end{align*}
\end{lemma}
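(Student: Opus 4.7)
The plan is to establish Lemma 3.2 by the standard two-sided first-order optimality argument: write the optimality condition for each of the two proximal problems via Lemma 3.1, add the two inequalities to cancel the coupling terms, and then combine strong convexity of $h$ with a Hölder/dual-norm bound to isolate $\|u_1 - u_2\|$.

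Concretely, first I would rewrite each minimization in the form assumed by Lemma 3.1 by pulling $\eta$ through, so that for $i = 1,2$ the objective becomes $\eta \varphi_i(x) + \B_h(x,v)$ with $\varphi_i(x) = \inner{w_i}{x} + r(x)$. A subgradient of $\varphi_i$ at $u_i$ is $w_i + g_r(u_i)$ for some $g_r(u_i) \in \partial r(u_i)$. Applying Lemma 3.1 to problem $1$ with $z = u_2$, and to problem $2$ with $z = u_1$, then adding the two inequalities, gives
\begin{equation*}
\eta \inner{(w_1 - w_2) + (g_r(u_1) - g_r(u_2))}{u_1 - u_2} \leq -\B_h(u_2,u_1) - \B_h(u_1,u_2),
\end{equation*}
since the $\B_h(\cdot,v)$ contributions cancel.

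The second step is to discard the subgradient monotonicity term: convexity of $r$ gives $\inner{g_r(u_1) - g_r(u_2)}{u_1 - u_2} \geq 0$, so the inequality above simplifies to
\begin{equation*}
\B_h(u_2,u_1) + \B_h(u_1,u_2) \leq \eta \inner{w_2 - w_1}{u_1 - u_2}.
\end{equation*}
The right-hand side is bounded by $\eta \norm{w_1 - w_2}_* \norm{u_1 - u_2}$ using the definition of the dual norm. The left-hand side is lower bounded by $\norm{u_1 - u_2}^2$ since $h$ is $1$-strongly convex, each Bregman term being $\geq \tfrac{1}{2}\norm{u_1 - u_2}^2$. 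Dividing by $\norm{u_1 - u_2}$ (the case $u_1 = u_2$ being trivial) yields the claim.

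The argument is essentially routine and I do not expect a serious obstacle; the only care point is making sure Lemma 3.1 is applied with the correct identification of $\varphi$ and $\eta$ (the $\eta$ appearing outside the brackets in Lemma 3.1 versus the $1/\eta$ multiplying $\B_h$ here), and ensuring the subgradient selections of $r$ are consistent with those used when invoking Lemma 3.1 at the two different points $u_1$ and $u_2$. Once the bookkeeping of subgradients is stated cleanly, the rest follows from strong convexity and duality in two lines.
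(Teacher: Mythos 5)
Your proof is correct and follows essentially the same route as the paper's: both add the two first-order optimality conditions, discard the monotone $\partial r$ term, lower-bound via $1$-strong convexity of $h$, and finish with Hölder. The only cosmetic difference is that you invoke Lemma \ref{lemma:main_inequality} and use $\B_h(u_1,u_2)+\B_h(u_2,u_1)\geq\norm{u_1-u_2}^2$, whereas the paper works directly from the variational inequality and uses the equivalent gradient-monotonicity form $\inner{\nabla h(u_1)-\nabla h(u_2)}{u_1-u_2}\geq\norm{u_1-u_2}^2$.
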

\begin{proof}
From the optimality of $u_1$ and $u_2$ \cite[Theorem 3.1.24]{nesterov2018lectures}, we have
$$
\eta\inner{w_1 + g(u_1)}{u_2-u_1} \geq \inner{\nabla h(u_1) - \nabla h(v)}{u_1-u_2},
$$
and
$$
\eta\inner{-w_2 - g(u_2)}{u_2-u_1} \geq \inner{\nabla h(v) - \nabla h(u_2)}{u_1-u_2}.
$$
where $g(u) \in \partial r(u) \ \forall u \in \X$. Adding these two inequalities up, we get
\begin{equation}
\label{eq:breg_cos1}
\eta \inner{w_1 - w_2 + g(u_1) - g(u_2)}{u_2-u_1} \geq \inner{\nabla h(u_1) - \nabla h(u_2)}{u_1-u_2}.
\end{equation}
Since $h$ is $1$-strongly convex, it follows that
\begin{equation}
\label{eq:breg_cos2}
\inner{\nabla h(u_1) - \nabla h(u_2)}{u_1-u_2} \geq \norm{u_1-u_2}^2.
\end{equation}
Combining \eqref{eq:breg_cos1} and \eqref{eq:breg_cos2}, using the Cauchy-Schwarz inequality and the monotonicity of the subgradient $\inner{g(u_1) - g(u_2)}{u_2-u_1} \leq 0$, we have 
$$
\norm{u_1-u_2}^2 \leq \eta \norm{w_1 - w_2}_*\norm{u_2-u_1}.
$$
As a result, the claim follows. 
\end{proof}

The next lemma is useful for upper bounding quantities arising from the use of adaptive step-sizes in OCO algorithms.
\begin{lemma} \label{lemma:sqrt_bound_2}
Let $\{a_k\}_{k=1}^{T}$, $\{b_k\}_{k=1}^{T+1}$, $\{c_k\}_{k=1}^T$, be nonnegative sequences, with $b_{t+1} \geq b_t$ and $c_{t+1} \geq c_t$. Then, for $T \geq 1$,
$$
\sum_{t=1}^T a_t \sqrt{\frac{b_t}{c_t + \sum_{k=1}^t a_k}} \leq 2\sqrt{b_T\left(c_T + \sum_{t=1}^T a_t\right)}.
$$
\end{lemma}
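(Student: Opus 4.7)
The plan is to reduce the inequality to the classical single-variable bound
$\sum_{t=1}^T a_t / \sqrt{\sum_{k=1}^t a_k} \leq 2\sqrt{\sum_{t=1}^T a_t}$
by peeling off the extra structure with two trivial monotonicity arguments. Note that neither the monotonicity of $b_t$ nor of $c_t$ is needed in a sophisticated way: the statement actually follows from $b_t \leq b_T$ and $c_t \geq 0$ alone.

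\textbf{Step 1 (extract $b_T$).} Since $b_t \leq b_T$ for every $t \leq T$, factor $\sqrt{b_T}$ out of the left-hand side, reducing the problem to bounding
$\sqrt{b_T}\sum_{t=1}^T a_t / \sqrt{c_t + \sum_{k=1}^t a_k}$.

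\textbf{Step 2 (drop $c_t$).} Because $c_t \geq 0$, adding it in the denominator only shrinks each summand, so we may drop it entirely and work with $\sqrt{b_T}\sum_{t=1}^T a_t / \sqrt{S_t}$, where $S_t := \sum_{k=1}^t a_k$.

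\textbf{Step 3 (classical scalar lemma).} For the remaining sum I would use the standard telescoping argument: writing $S_0 := 0$, the identity $a_t = S_t - S_{t-1} = (\sqrt{S_t} - \sqrt{S_{t-1}})(\sqrt{S_t} + \sqrt{S_{t-1}})$ combined with $\sqrt{S_t} + \sqrt{S_{t-1}} \leq 2\sqrt{S_t}$ yields $a_t/\sqrt{S_t} \leq 2(\sqrt{S_t} - \sqrt{S_{t-1}})$. Summing gives $\sum_{t=1}^T a_t / \sqrt{S_t} \leq 2\sqrt{S_T}$. (If one prefers, the same bound follows from comparing to the integral $\int_0^{S_T} ds/\sqrt{s}$.) A minor edge case to handle cleanly is when some early $a_t = 0$, which is safe under the convention $0/0 = 0$ since both sides are zero on those terms.

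\textbf{Step 4 (put $c_T$ back).} Combining the above gives the upper bound $2\sqrt{b_T\,S_T}$, and since $c_T \geq 0$ we have $S_T \leq c_T + S_T$, hence $2\sqrt{b_T\,S_T} \leq 2\sqrt{b_T(c_T + S_T)}$, which is exactly the claim.

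There is no serious obstacle here; the lemma is essentially a repackaging of the well-known AdaGrad-style inequality. The only thing to be a bit careful about is the direction of each monotonicity step (one wants each substitution to enlarge the relevant quantity), and the degenerate case where $S_t = 0$ for small $t$.
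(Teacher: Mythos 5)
Your proof is correct, but it takes a genuinely different route from the paper's. The paper proves the lemma by induction on $T$: the inductive step uses the monotonicity of both $b_t$ and $c_t$ to absorb $2\sqrt{b_{T-1}(c_{T-1}+\sum_{t=1}^{T-1}a_t)}$ into $2\sqrt{b_T(A-a_T)}$ with $A:=c_T+\sum_{t=1}^T a_t$, and then reduces everything to the scalar inequality $2\sqrt{A-a_T}+a_T/\sqrt{A}\leq 2\sqrt{A}$ (verified by noting the left-hand side is maximized at $a_T=0$). You instead peel off the structure monotonically — bound $b_t$ by $b_T$, drop $c_t\geq 0$ from the denominator — and land on the classical telescoping bound $\sum_t a_t/\sqrt{S_t}\leq 2\sqrt{S_T}$ via $a_t/\sqrt{S_t}\leq 2(\sqrt{S_t}-\sqrt{S_{t-1}})$. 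Your argument is arguably cleaner and reveals that the hypothesis $c_{t+1}\geq c_t$ is not needed (only $c_t\geq 0$ is used), and it in fact yields the slightly stronger intermediate bound $2\sqrt{b_T\sum_{t=1}^T a_t}$ before $c_T$ is reinserted; the paper's induction, by contrast, keeps $c_t$ inside the recursion throughout, which is why it invokes the monotonicity of $c_t$. Your handling of the $S_t=0$ edge case (where necessarily $a_t=0$, so the offending terms vanish under the convention $0/0=0$) is the right thing to say and is the only point where care is needed in your reduction.
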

\begin{proof}
The proof is by induction. For $T=1$, one can show analytically that the inequality holds. Suppose that the inequality holds for some $T-1>2$. Thus, it follows that
\begin{align*}
\sum_{t=1}^T a_t \sqrt{\frac{b_t}{c_t + \sum_{k=1}^t a_k}} 
&= \sum_{t=1}^{T-1} a_t \sqrt{\frac{b_t}{c_t + \sum_{k=1}^t a_k}} +  a_T \sqrt{\frac{b_T}{c_T + \sum_{k=1}^T a_k}} \\ 
&\leq 2\sqrt{b_{T-1}\left(c_{T-1} + \sum_{t=1}^{T-1} a_t\right)} + a_T \sqrt{\frac{b_T}{c_T + \sum_{k=1}^T a_k}} \\
&\leq 2\sqrt{b_T\left(c_T - a_T + \sum_{t=1}^{T} a_t\right)} + a_T \sqrt{\frac{b_T}{c_T + \sum_{k=1}^T a_k}} \\
&= \sqrt{b_T}\left(2\sqrt{A - a_T} + \frac{a_T}{\sqrt{A}}\right),
\end{align*}
where $A := c_T + \sum_{t=1}^T a_t$. As a function of $a_T \geq 0$, one can show that the R.H.S of the previous inequality is maximized when $a_T = 0$. Thus,
$$
\sqrt{b_T}\left(2\sqrt{A - a_T} + \frac{a_T}{\sqrt{A}}\right) \leq 2\sqrt{b_TA}.
$$
This concludes the proof.
\end{proof}

The next lemma is useful for upper bounding quantities arising from the use of adaptive step-sizes in OCO algorithms, especially when the costs are strongly convex.
\begin{lemma} \label{lemma:log_bound}
Given two positive reals $a$ and $b$, it holds that
\begin{align*}
b\left( \frac{1}{b} - \frac{1}{a} \right) \leq \log \left( \frac{b^{-1}}{a^{-1}} \right). 
\end{align*}
\end{lemma}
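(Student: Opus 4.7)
The inequality to prove simplifies dramatically once rewritten. The left-hand side equals $1 - b/a$ and the right-hand side equals $\log(a/b)$, so the claim is equivalent to $1 - b/a \leq \log(a/b)$ for positive reals $a, b$. Substituting $x := a/b > 0$, this becomes the one-variable inequality
\begin{equation*}
\log(x) \geq 1 - \frac{1}{x} \quad \text{for all } x > 0.
\end{equation*}

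My plan is to prove this single-variable inequality by reduction to the standard exponential inequality $e^z \geq 1 + z$, valid for all real $z$. Setting $z = \log(x) \in \mathbb{R}$, this gives $x \geq 1 + \log(x)$. Applying the same inequality to $1/x$, namely $z = -\log(x)$, yields $1/x \geq 1 - \log(x)$, which upon rearranging is exactly $\log(x) \geq 1 - 1/x$, as desired. Undoing the substitution $x = a/b$ and multiplying both sides of the resulting inequality gives the statement of the lemma.

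As a self-contained alternative, one could instead define $\varphi(x) := \log(x) - 1 + 1/x$ on $(0, \infty)$, compute $\varphi'(x) = 1/x - 1/x^2 = (x-1)/x^2$, observe that $\varphi'(x) < 0$ for $x \in (0,1)$ and $\varphi'(x) > 0$ for $x > 1$, so $x = 1$ is the unique global minimizer with $\varphi(1) = 0$, from which $\varphi(x) \geq 0$ on $(0, \infty)$ follows. I would favor the first route for brevity.

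There is no real obstacle here: the statement is an elementary calculus inequality once the algebraic simplification is carried out, and the only subtlety is recognizing that $b^{-1}/a^{-1} = a/b$ so that the logarithm is of a positive argument (guaranteed by $a, b > 0$).
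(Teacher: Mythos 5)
Your proof is correct and rests on the same elementary fact as the paper's: the paper applies $\log(\xi) \leq \xi - 1$ with $\xi = a^{-1}/b^{-1} = b/a$, which is exactly the inequality you obtain from $e^z \geq 1+z$ with $z = -\log(a/b)$. The algebraic simplification and the calculus-based alternative are both fine, but the route is essentially identical to the paper's.
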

\begin{proof}
Let us first recall the identity $\log(\xi) \leq \xi-1$, for any $\xi>0$. 
Set $\xi = a^{-1}/b^{-1}$. Notice that
\begin{align*}
    -\log\left(\frac{b^{-1}}{a^{-1}}\right) = \log\left(\frac{a^{-1}}{b^{-1}}\right) \leq \frac{a^{-1}}{b^{-1}}-1=b\left( \frac{1}{a} - \frac{1}{b} \right). 
\end{align*}
Thus, the claim is an immediate consequence of the above relation. 
\end{proof}

\subsection{Main Proofs}

Next, we continue with the proofs of our main results.

\subsubsection{Proof of Theorem \ref{theorem:OptCMD_convex}}

Define $x^* := \text{arg}\min_{x \in \X} \sum_{t=1}^T f_t(x)$. From the definition of $f_t$,
\begin{align*}
    f_t(x_t) - f_t(x^*)
    &= s_t(x_t) + r_t(x_t) - s_t(x^*) - r_t(x^*) \\
    &= s_t(x_t) - s_t(x^*) + r_t(x_t) - \hat{r}_t(x_t) + \hat{r}_t(x_t) - r_t(y_t) + r_t(y_t) - r_t(x^*) \\
    &\leq  \Delta_t + \inner{\nabla s_t(x_t)}{x_t - x^*} + \inner{\hat{g}_t(x_t)}{x_t - y_t} + \inner{g_t(y_t)}{y_t - x^*} \\
    &= \Delta_t + \inner{\nabla s_t(x_t) - \nabla \hat{s}_t(y_{t-1})}{x_t - y_t} + \inner{\nabla \hat{s}_t(y_{t-1}) + \hat{g}_t(x_t)}{x_t - y_t} \\
    &\quad + \inner{\nabla s_t(x_t) + g_t(y_t)}{y_t - x^*},
\end{align*}
where $g_t(y_t) \in \partial r_t(y_t)$, $\hat{g}_t(x_t) \in \partial \hat{r}_t(x_t)$, $\Delta_t := r_t(x_t) - \hat{r}_t(x_t) + \hat{r}_t(y_t) - r_t(y_t)$ and the inequality follows from the convexity of $s_t$, $r_t$ and $\hat{r}_t$. Using Lemma \ref{lemma:main_inequality}, we get 
\begin{align*}
    f_t(x_t) - f_t(x^*) 
    &\leq \Delta_t + \inner{\nabla s_t(x_t) - \nabla \hat{s}_t(y_{t-1})}{x_t - y_t} + \frac{1}{\eta_t}\big(\B_h(x^*,y_{t-1}) - \B_h(x^*,y_t) \\
    &\quad - \B_h(x_t,y_{t-1}) - \B_h(y_t,x_t)\big) \\
    &\leq \Delta_t + \|\nabla s_t(x_t) - \nabla \hat{s}_t(y_{t-1})\|_*\|x_t - y_t\| + \frac{1}{\eta_t}\big(\B_h(x^*,y_{t-1}) - \B_h(x^*,y_t) \\
    &\quad - \B_h(x_t,y_{t-1}) - \B_h(y_t,x_t)\big) \\
    &= A_t + B_t + C_t,
\end{align*}
where $A_t$, $B_t$ and $C_t$ are defined as
\begin{align*}
    A_t &:= \|\nabla s_t(x_t) - \nabla \hat{s}_t(y_{t-1})\|_*\|x_t - y_t\| - \frac{1}{2\eta_t}\B_h(y_t,x_t) - \frac{1}{\eta_t}\B_h(x_t,y_{t-1}),
\end{align*}
$$
B_t := \frac{1}{\eta_t}\big( \B_h(x^*,y_{t-1}) - \B_h(x^*,y_t) \big) \quad \text{and} \quad  C_t := \Delta_t- \frac{1}{2\eta_t}\B_h(y_t,x_t).
$$
We will proceed by upper bounding $\sum_{t=1}^T A_t$ and $\sum_{t=1}^T B_t$ separately.

\textbf{(Upper bounding $\sum_{t=1}^T A_t$)}

Starting from the fact that $-\B_h(x,y) \leq - \frac{1}{2}\norm{x-y}^2$ and that $ab \leq \rho a^2 + \frac{b^2}{4\rho}$ for any $\rho > 0$, we have
\begin{align}
    \label{eq:bound_sum_A_mirror}
    A_t
    &= \|\nabla s_t(x_t) - \nabla \hat{s}_t(y_{t-1})\|_*\|x_t - y_t\| - \frac{1}{2\eta_t}\B_h(y_t,x_t) - \frac{1}{\eta_t}\B_h(x_t,y_{t-1}) \nonumber \\
    &\leq \|\nabla s_t(x_t) - \nabla \hat{s}_t(y_{t-1})\|_*\|x_t - y_t\| - \frac{1}{4\eta_t}\|y_t - x_t\|^2 - \frac{1}{2\eta_t}\norm{x_t-y_{t-1}}^2 \nonumber \\
    &\leq \eta_{t+1}\|\nabla s_t(x_t) - \nabla \hat{s}_t(y_{t-1})\|_*^2 + \left( \frac{1}{4\eta_{t+1}} - \frac{1}{4\eta_t} \right)\norm{x_t-y_t}^2 - \frac{1}{2\eta_t}\norm{x_t-y_{t-1}}^2 \nonumber \\
    &\leq 2\eta_{t+1}\|\nabla s_t(x_t) - \nabla s_t(y_{t-1})\|_*^2 - \frac{1}{2\eta_t}\norm{x_t-y_{t-1}}^2  + 2\eta_{t+1}\|\nabla s_t(y_{t-1}) - \nabla \hat{s}_t(y_{t-1})\|_*^2 +  \frac{R^2}{2\eta_{t+1}} - \frac{R^2}{2\eta_t} \nonumber \\
    &\leq \left(2\beta^2\eta_{t+1} - \frac{1}{2\eta_t} \right)\norm{x_t-y_{t-1}}^2 + \frac{R^2}{2\eta_{t+1}} - \frac{R^2}{2\eta_t} + 2\eta_{t+1}\|\nabla s_t(y_{t-1}) - \nabla \hat{s}_t(y_{t-1})\|_*^2 \nonumber \\
    &\leq 2\eta_{t+1}\|\nabla s_t(y_{t-1}) - \nabla \hat{s}_t(y_{t-1})\|_*^2 + \frac{R^2}{2\eta_{t+1}} - \frac{R^2}{2\eta_t},
\end{align}
where used the facts that $\eta_t$ is nonincreasing, Assumption \ref{ass:regularity} and the fact that $\eta_t \leq \frac{1}{2\beta}$, which implies $2\beta^2\eta_{t+1} - \frac{1}{2\eta_t} \leq 0$. 
Next, we will bound the two terms of \eqref{eq:bound_sum_A_mirror} separately. Summing the first term over $t=1,\ldots,T$, we get
\begin{align*}
    2\sum_{t=1}^T \eta_{t+1}\|\nabla s_t(y_{t-1}) - \nabla \hat{s}_t(y_{t-1})\|_*^2 
    &= 2\sum_{t=1}^T \frac{\|\nabla s_t(y_{t-1}) - \nabla \hat{s}_t(y_{t-1})\|_*^2}{\sqrt{4\beta^2 + \left(V_t' \right)^2 + D'_t}} \leq 4\sqrt{4\beta^2 + \left(V_T' \right)^2 + D'_T} \\
    &\leq 4V_T' + 4\sqrt{4\beta^2 + D_T'},
\end{align*}
where the inequalities follow from the definition of $D_t'$, Lemma \ref{lemma:sqrt_bound_2} and $\sqrt{a + b} \leq \sqrt{a} + \sqrt{b}$. Summing the second and third terms of \eqref{eq:bound_sum_A_mirror} over $t=1,\ldots,T$ and telescoping the sum, we get
\begin{align*}
    \frac{R^2}{2}\sum_{t=1}^T \left( \frac{1}{\eta_{t+1}} - \frac{1}{\eta_t} \right) \leq \frac{R^2}{2\eta_{T+1}}.
\end{align*}
Putting these bounds together, we arrive at
\begin{align}
    \label{eq:A_bound_mirror}
    \sum_{t=1}^T A_t 
    &\leq 4V_T' + \frac{R^2}{2\eta_{T+1}} + 4\sqrt{4\beta^2 + D_T'} = \left(4 + \frac{R^2}{2} \right)\left(V_T' + \sqrt{4\beta^2 + D_T'} \right).
\end{align}

\textbf{(Upper bounding $\sum_{t=1}^T B_t$)}

Rearranging and telescoping the sum, we have
\begin{align}
    \label{eq:B_bound_mirror}
    \sum_{t=1}^T B_t 
    \leq \sum_{t=1}^T \frac{1}{\eta_t}\big( \B_h(x^*,y_{t-1}) - \B_h(x^*,y_t) \big) \leq \frac{\B_h(x^*,y_0)}{\eta_1} + \sum_{t=1}^{T-1}\left(\frac{1}{
    \eta_{t+1}} - \frac{1}{
    \eta_t} \right) \B_h(x^*,y_t) \leq \frac{R^2}{\eta_T},
\end{align}
where we used Assumption \ref{ass:regularity}. Putting \eqref{eq:A_bound_mirror} and \eqref{eq:B_bound_mirror} together, we arrive at
\begin{align}
    \Regret_T 
    &\leq \sum_{t=1}^T \left( A_t + B_t + C_t \right) \nonumber \\
    &\leq  \left(4 + \frac{R^2}{2} \right)\left(V_T' + \sqrt{4\beta^2 + D_T'} \right) + \frac{R^2}{\eta_T} + \sum_{t=1}^T \left(\Delta_t- \frac{1}{2\eta_t}\B_h(y_t,x_t)\right) \nonumber \\
    &\leq  \left(4 + \frac{3R^2}{2} \right)\left(V_T' + \sqrt{4\beta^2 + D_T'} \right) + \sum_{t=1}^T \left(\Delta_t- \frac{1}{2\eta_t}\B_h(y_t,x_t)\right) \label{eq:bound_12_mirror} \\
    &\leq \left(5 + \frac{3R^2}{2} \right)\left(V_T' + \sqrt{4\beta^2 + D_T'} \right) \label{eq:bound_1_mirror},
\end{align}
where we used the definition of $\eta_{T+1}$ and the fact that $V_T' = \sum_{t=1}^T |\Delta_t|$ by definition. Similar to \cite[Theorem 6.2]{campolongo2020temporal}, we will proceed to bound the regret in a second way, which in turn will imply the regret is upper bounded by the minimum of \eqref{eq:bound_1_mirror} and the second bound. In particular, we will focus on the following part of \eqref{eq:bound_12_mirror}
\begin{align*}
    \left( 5 + \frac{3R^2}{4} \right)V_T' - \sum_{t=1}^T \frac{1}{2\eta_t}\B_h(y_t,x_t) = \left( 5 + \frac{3R^2}{4} \right)\sum_{t=1}^T |\Delta_t| - \sum_{t=1}^T \frac{1}{4\eta_t}\|x_t - y_t\|^2 = c\lambda_T,
\end{align*}
where $\lambda_T := \sum_{t=1}^T \left( |\Delta_t| -  \frac{1}{4c\eta_t}\|x_t - y_t\|^2\right)$ and $c := 5 + \frac{3R^2}{4}$. Thus,
\begin{equation}
    \label{eq:bound_2_1_mirror}
    \Regret_T \leq c\lambda_T + \left( 4 + \frac{3R^2}{4} \right)\sqrt{4\beta^2 + D_T'}.
\end{equation}
Next, we will proceed to prove an upper bound to $\lambda_T^2$, which will naturally imply an upper bound to $c\lambda_T$. To do so, we will first prove an upper bound to the term $|\Delta_t| -  \frac{1}{4c\eta_t}\|x_t - y_t\|^2$.

\textbf{(Upper bounding $|\Delta_t| -  \frac{1}{4c\eta_t}\|x_t - y_t\|^2$)}

From the definition of $\Delta_t$ and convexity of $r_t$ and $\hat{r}_t$, we have that
\begin{align}
    \label{eq:ineq1_mirror}
    \Delta_t 
    &\leq \inner{g_t(x_t) - \hat{g}_t(y_t)}{x_t - y_t} \leq \sqrt{2}R\|g_t(x_t) - \hat{g}_t(y_t)\|_*  + \frac{\norm{x_t-y_t}^2}{4c\eta_t}
\end{align}
and
\begin{align}
    \label{eq:ineq2_mirror}
    \Delta_t 
    &\leq \|g_t(x_t) - \hat{g}_t(y_t)\|_*\|x_t - y_t\| \leq c\eta_t\|g_t(x_t) - \hat{g}_t(y_t)\|^2_* + \frac{\norm{x_t-y_t}^2}{4c\eta_t}
\end{align}
for any $g_t(x_t) \in \partial r_t(x_t)$ and $\hat{g}_t(y_t) \in \partial \hat{r}_t(y_t)$, where we used the facts that $\frac{\norm{x_t-y_t}^2}{4c\eta_t} \geq 0$ and $ ab \leq \rho a^2 + \frac{b^2}{4\rho}$ for any $\rho > 0$. Similarly, we also have that 
\begin{equation}
    \label{eq:ineq3_mirror}
    -\Delta_t \leq \sqrt{2}R\|g_t(y_t) - \hat{g}_t(x_t)\|_*  + \frac{\norm{x_t-y_t}^2}{4c\eta_t}
\end{equation}
and
\begin{equation}
    \label{eq:ineq4_mirror}
    -\Delta_t \leq c\eta_t\|g_t(y_t) - \hat{g}_t(x_t)\|^2_* + \frac{\norm{x_t-y_t}^2}{4c\eta_t},
\end{equation}
Combining \eqref{eq:ineq1_mirror}, \eqref{eq:ineq2_mirror}, \eqref{eq:ineq3_mirror} and \eqref{eq:ineq4_mirror}, we get that
\begin{equation}
    \label{eq:bound_abs_delta_mirror}
    |\Delta_t| - \frac{\norm{x_t-y_t}^2}{4c\eta_t} \leq \min \left\{\sqrt{2}RG_t, c\eta_tG_t^2 \right\},
\end{equation}
where $G_t := \max \left\{\|g_t(x_t) - \hat{g}_t(y_t)\|_*, \|g_t(y_t) - \hat{g}_t(x_t)\|_* \right\}$.

\textbf{(Upper bounding $\lambda_T^2$)}

Notice that
\begin{equation}
    \label{eq:comp_min_upper_bound_mirror}
    \lambda_t - \lambda_{t-1} = |\Delta_t| - \frac{\norm{x_t-y_t}^2}{4c\eta_t} \leq \min \left\{\sqrt{2}RG_t, c\eta_tG_t^2 \right\}.
\end{equation}
Defining $\lambda^2_0 := 0$, we have that
\begin{align*}
    \lambda^2_T
    &= \sum_{t=1}^T \left( \lambda^2_t - \lambda^2_{t-1} \right) = \sum_{t=1}^T \left(\left( \lambda_t - \lambda_{t-1} \right)^2 + 2\left( \lambda_t - \lambda_{t-1} \right)\lambda_{t-1}\right) \leq \sum_{t=1}^T \left(2R^2G_t^2 + 2c\eta_t\lambda_{t-1}G_t^2\right),
\end{align*}
where the last inequality follows from \eqref{eq:comp_min_upper_bound_mirror}. Next, notice that by definition
$$
\eta_t\lambda_{t-1} = \frac{\sum_{k=1}^{t-1}\left( |\Delta_k| - \frac{\norm{x_k-y_k}^2}{2c\eta_k}\right)}{\sqrt{4\beta^2 + \left( \sum_{k=1}^{t-1} |\Delta_k| \right)^2 + D_{t-1}'}} \leq \frac{\sum_{k=1}^{t-1} |\Delta_k|}{\sum_{k=1}^{t-1} |\Delta_k|} = 1.
$$
Thus, we have that
$$
\lambda^2_T \leq \sum_{t=1}^T \left(2R^2G_t^2 + 2cG_t^2\right) = (2R^2+2c)\sum_{t=1}^T G_t^2.
$$
Taking the square root and substituting it into \eqref{eq:bound_2_1_mirror}, we get the second regret bound
\begin{equation}
    \label{eq:bound_2_mirror}
    \Regret_T \leq c\sqrt{2R^2+2c}\sqrt{\sum_{t=1}^T G_t^2} + \left( 5 + \frac{3R^2}{4} \right)\sqrt{4\beta^2 + D_T'}.
\end{equation}
Finally, combining \eqref{eq:bound_1_mirror} and \eqref{eq:bound_2_mirror}, arrive at
\begin{align*}
    \Regret_T 
    &\leq \min\left\{ \left( 5 + \frac{3R^2}{4} \right)V_T', \  c\sqrt{2R^2+2c}\sqrt{\sum_{t=1}^T G_t^2} \right\} + \left( 5 + \frac{3R^2}{4} \right)\sqrt{4\beta^2 + D_T'} \\
    &= O\left(1 + \sqrt{D_T'} + \min\left\{ V_T', \sqrt{T} \right\}\right).
\end{align*}
This completes the proof. $\Box$

\subsubsection{Proof of Theorem \ref{theorem:OptCMD_strong}}

In order to prove Theorem \ref{theorem:OptCMD_strong}, first we will prove a version of this theorem for general Bregman divergences and a general notion of strong convexity (Lemma \ref{lemma:OptCMD_strong_general}). This result is achieved by exploiting a certain technical assumption (Assumption \ref{ass:bregman_reg}). Then, we will show that for the euclidean case (i.e. $\B_h(x,y) = \frac{1}{2}\norm{x-y}^2_2$), this technical assumption always holds and Theorem \ref{theorem:OptCMD_strong} follows.

\begin{definition}[$\alpha$-Strong convexity w.r.t. $\B_h$]\label{def:breg_strong_convexity}
A function $f: \X \to \Real$ is $\alpha$-\emph{strongly convex} w.r.t. $\B_h$ if $f(x) - f(y) \leq \inner{\nabla f(x)}{x-y} - \alpha \B_h(y,x)$, for all $x,y \in \X$.
\end{definition}

\begin{assumption}[Technical assumption]
\label{ass:bregman_reg}
For $1/\eta > \alpha > 0$, there exists a constant $\lambda>0$ such that $\lambda \B_h(x,y) -  \frac{1}{\eta}\B_h(y,z) - \alpha \B_h(x,z) \leq 0$, for all $x,y,z \in \X$.
\end{assumption}

Before stating the general version of Theorem \ref{theorem:OptCMD_strong}, we make a short remark on Assumption \ref{ass:bregman_reg}.

\begin{remark}[Mildness of Assumption \ref{ass:bregman_reg}]
Notice that $\eta$, $\alpha$ and $\B_h(x,y)$ are all non-negative. Thus, for a general choice of $h$, one should expect to be able to choose a small enough $\lambda$ to ensure that the inequality in Assumption \ref{ass:bregman_reg} holds. In particular, when $\B_h(x,y) = \frac{1}{2}\norm{x-y}^2_2$, we will show that Assumption \ref{ass:bregman_reg} holds for $\lambda=\alpha/2$.
\end{remark}

\begin{lemma}[Strongly convex case with general divergence]\label{lemma:OptCMD_strong_general}
Suppose that Assumptions \ref{ass:regularity} and \ref{ass:bregman_reg} hold and that the costs $\{ f_t \}_{t=1}^T$ are $\alpha$-strongly convex w.r.t. $B_h$.
Using the adaptive step-size
$$
\eta_1 = \frac{1}{2\beta}, \quad \eta_t = \left( \frac{\lambda}{\sigma^2}D'_{t-1} + 2\beta \right)^{-1}
$$
for all $t>1$, Algorithm \ref{alg:OptCMD} with $\hat{r}_t = r_t$ guarantees
$$
\Regret_T \leq O\left(1 + \log(1+D'_T)\right).
$$
\end{lemma}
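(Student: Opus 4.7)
The plan is to adapt the argument of Theorem \ref{theorem:OptCMD_convex} to exploit strong convexity and perfect function predictions $\hat r_t = r_t$, which force $\Delta_t = 0$ and thus eliminate the $V_T'$ contribution. Starting from $f_t(x_t) - f_t(x^*)$ with $x^* := \arg\min_{x\in\X}\sum_t f_t(x)$, I would split it as $(s_t(x_t)-s_t(x^*)) + (r_t(x_t) - r_t(y_t)) + (r_t(y_t) - r_t(x^*))$, apply $\alpha$-strong convexity w.r.t.\ $\B_h$ to the first summand and convexity of $r_t$ to the remaining two, yielding
\[
f_t(x_t) - f_t(x^*) \leq \inner{\nabla s_t(x_t)+g_t(x_t)}{x_t - y_t} + \inner{\nabla s_t(x_t)+g_t(y_t)}{y_t - x^*} - \alpha \B_h(x^*, x_t).
\]
After adding and subtracting $\nabla \hat s_t(y_{t-1})$ in the first inner product, I would apply Lemma \ref{lemma:main_inequality} to the $x_t$-update (with $z = y_t$) and to the $y_t$-update (with $z = x^*$), converting both inner products into Bregman divergences.

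Next, I would control the residual $\inner{\nabla s_t(x_t) - \nabla \hat s_t(y_{t-1})}{x_t - y_t}$ by invoking Lemma \ref{lemma:OptCMD_diff} (legitimate because $\hat r_t = r_t$) to obtain $\|x_t - y_t\| \leq \eta_t \|\nabla s_t(x_t) - \nabla \hat s_t(y_{t-1})\|_*$, and then split $\eta_t \|\nabla s_t(x_t) - \nabla \hat s_t(y_{t-1})\|_*^2$ via $\beta$-smoothness into a piece bounded by $2\eta_t\beta^2\|x_t - y_{t-1}\|^2$ (absorbed by $-\frac{1}{\eta_t}\B_h(x_t,y_{t-1})$ because $\eta_t \leq 1/(2\beta)$) and a prediction-error piece $2\eta_t \sigma_t^2$, with $\sigma_t^2 := \|\nabla s_t(y_{t-1}) - \nabla \hat s_t(y_{t-1})\|_*^2$. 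I would then invoke Assumption \ref{ass:bregman_reg} with the substitution $x = x^*$, $y = y_t$, $z = x_t$ to rewrite
\[
-\tfrac{1}{\eta_t}\B_h(y_t, x_t) - \alpha \B_h(x^*, x_t) \leq -\lambda \B_h(x^*, y_t),
\]
producing the per-round inequality $f_t(x_t) - f_t(x^*) \leq 2\eta_t \sigma_t^2 + \frac{1}{\eta_t}\B_h(x^*, y_{t-1}) - (\frac{1}{\eta_t} + \lambda)\B_h(x^*, y_t)$.

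Summing over $t$, the Bregman telescoping sum is bounded by $R^2/\eta_1 = 2\beta R^2$ provided $\frac{1}{\eta_t} - \frac{1}{\eta_{t-1}} \leq \lambda$; this holds by design of $\eta_t$, since $\frac{1}{\eta_t} - \frac{1}{\eta_{t-1}} = \frac{\lambda}{\sigma^2}\sigma_{t-1}^2 \leq \lambda$ by the uniform bound $\sigma_t^2 \leq \sigma^2$ from Assumption \ref{ass:regularity}. To control $\sum_t 2\eta_t \sigma_t^2 = \frac{2\sigma^2}{\lambda}\sum_t \frac{D'_t - D'_{t-1}}{D'_{t-1} + 2\beta\sigma^2/\lambda}$, I would use the increment bound $D'_t - D'_{t-1} \leq \sigma^2$ to upper bound each summand by $\frac{D'_t - D'_{t-1}}{D'_t + \mu'}$, where $\mu' = (2\beta - \lambda)\sigma^2/\lambda$, and then apply Lemma \ref{lemma:log_bound} (with $a = D'_t + \mu'$ and $b = D'_{t-1} + \mu'$) to collapse the sum into a telescoping logarithm $\log((D'_T + \mu')/\mu')$, yielding the desired $O(\log(1 + D'_T))$ contribution.

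The main technical obstacle is keeping track of the asymmetric Bregman arguments so that Assumption \ref{ass:bregman_reg} is triggered in precisely the configuration produced by the two mirror descent inequalities, and calibrating the step-size so that the per-round jump $\frac{1}{\eta_t} - \frac{1}{\eta_{t-1}}$ is no larger than the coefficient $\lambda$ manufactured by that assumption; these two conditions are what allow the telescoping Bregman sum to remain $O(1)$ while still producing the rate $O(\log(1+D'_T))$. A minor side issue is the mismatch between $D'_{t-1}$ in the step-size denominator and $D'_t$ in the natural log-telescope, resolved by the shift-by-$\sigma^2$ trick noted above, which implicitly calls for $\lambda < 2\beta$; this is a condition one can always guarantee by choosing $\lambda$ small enough in Assumption \ref{ass:bregman_reg}.
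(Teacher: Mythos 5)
Your proposal is correct and follows essentially the same route as the paper's proof: the same decomposition into the prediction-residual and Bregman-telescoping parts, the same use of Lemma \ref{lemma:OptCMD_diff} and $\beta$-smoothness to absorb $2\eta_t\beta^2\|x_t-y_{t-1}\|^2$, the same invocation of Assumption \ref{ass:bregman_reg} with $(x,y,z)=(x^*,y_t,x_t)$, and the same calibration $\tfrac{1}{\eta_t}-\tfrac{1}{\eta_{t-1}}=\tfrac{\lambda}{\sigma^2}\sigma_{t-1}^2\le\lambda$. The only (harmless) deviation is in collapsing $\sum_t 2\eta_t\sigma_t^2$ into a logarithm: you shift the denominator from $D'_{t-1}$ to $D'_t$ using $\sigma_t^2\le\sigma^2$ (requiring $\lambda<2\beta$, which one may always arrange), whereas the paper shifts the step-size index from $\eta_t$ to $\eta_{t+1}$ at the cost of an additive $\sigma^2/(2\beta)$ before applying Lemma \ref{lemma:log_bound}.
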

\begin{proof}
Let $x^* := \text{arg}\min_{x \in \X} \sum_{t=1}^T f_t(x)$. Since $s_t$ is $\alpha$-strongly convex w.r.t. $\B_h$, we have
$$
s_t(x_t) - s_t(x^*) \leq \inner{\nabla s_t(x_t)}{x_t - x^*} - \alpha \B_h(x^*,x_t).
$$
Thus,
\begin{align*}
    f_t(x_t) - f_t(x^*)
    &= s_t(x_t) + r_t(x_t) - s_t(x^*) - r_t(x^*) \\
    &= s_t(x_t) - s_t(x^*) + r_t(x_t) - r_t(x_t)  + r_t(x_t) - r_t(y_t) + r_t(y_t) - r_t(x^*) \\
    &\leq  \inner{\nabla s_t(x_t)}{x_t - x^*} - \alpha \B_h(x^*,x_t) + \inner{g_t(x_t)}{x_t - y_t} + \inner{g_t(y_t)}{y_t - x^*} \\
    &= \inner{\nabla s_t(x_t) - \nabla \hat{s}_t(y_{t-1})}{x_t - y_t} + \inner{\nabla \hat{s}_t(y_{t-1}) + g_t(x_t)}{x_t - y_t} \\
    &\quad + \inner{\nabla s_t(x_t) + g_t(y_t)}{y_t - x^*} - \alpha \B_h(x^*,x_t),
\end{align*}
where $g_t(y_t) \in \partial r_t(y_t)$, $g_t(x_t) \in \partial r_t(x_t)$ and the inequality follows from the convexity of $s_t$ and $r_t$. Using Lemma \ref{lemma:main_inequality}, we get 
\begin{align*}
    f_t(x_t) - f_t(x^*) 
    &\leq \inner{\nabla s_t(x_t) - \nabla \hat{s}_t(y_{t-1})}{x_t - y_t} - \alpha \B_h(x^*,x_t) + \frac{1}{\eta_t}\big(\B_h(x^*,y_{t-1}) \\
    &\quad - \B_h(x^*,y_t) - \B_h(x_t,y_{t-1}) - \B_h(y_t,x_t)\big) \\
    &\leq \|\nabla s_t(x_t) - \nabla \hat{s}_t(y_{t-1})\|_*\|x_t - y_t\| - \alpha \B_h(x^*,x_t) + \frac{1}{\eta_t}\big(\B_h(x^*,y_{t-1}) \\
    &\quad - \B_h(x^*,y_t) - \B_h(x_t,y_{t-1}) - \B_h(y_t,x_t)\big) \\
    &= A_t + B_t,
\end{align*}
where $A_t$ and $B_t$ are defined as
$$
A_t := \frac{1}{\eta_t} \big( \B_h(x^*,y_{t-1}) - \B_h(x^*,y_t) - \B_h(y_t,x_t) \big) - \alpha \B_h(x^*,x_t)
$$
and
$$
B_t := \|\nabla s_t(x_t) - \nabla \hat{s}_t(y_{t-1})\|_*\|x_t - y_t\| - \frac{1}{\eta_t}\B_h(x_t,y_{t-1}).
$$
With the above notations at hand, it follows that
\begin{align}
\label{eq:Thm3-reg_AtBt_composite}
\Regret_T = \sum_{t=1}^T \left( f_t(x_t) - f_t(x^*) \right) \leq \sum_{t=1}^TA_t + \sum_{t=1}^TB_t.
\end{align}
We proceed by bounding $\sum_{t=1}^TA_t$ and $\sum_{t=1}^TB_t$  separately. 

\textbf{(Upper bounding $\sum_{t=1}^T A_t$)} Observe that
\begin{align*}
\sum_{t=1}^T A_t 
&= \sum_{t=1}^T \frac{1}{\eta_t} \big( \B_h(x^*,y_{t-1}) - \B_h(x^*,y_t) \big) - \sum_{t=1}^T \left(\frac{1}{\eta_t} \B_h(y_t,x_t) + \alpha \B_h(x^*,x_t) \right) \\
&\leq \frac{\B_h(x^*,y_0)}{\eta_1} + \sum_{t=1}^T \left( \frac{1}{\eta_{t+1}} - \frac{1}{\eta_t} \right) \B_h(x^*,y_t) - \sum_{t=1}^T \left(\frac{1}{\eta_t} \B_h(y_t,x_t) + \alpha \B_h(x^*,x_t) \right).
\end{align*}
Assumption \ref{ass:regularity} and $\eta_1= \frac{1}{2\beta}$ imply that 
\begin{subequations}
\label{eq:Thm3-At_uppers}
\begin{align}
\label{eq:Thm3-At_1st-term}
    \frac{\B_h(x^*,y_0)}{\eta_1} \leq 2\beta R^2.
\end{align}
From the definition of $\eta_t$, we have that 
\begin{align}
\label{eq:Thm3-At_2nd-term}
    \frac{1}{\eta_{t+1}} - \frac{1}{\eta_t} = \frac{\lambda}{\sigma^2} \norm{\nabla s_t(y_{t-1}) - \nabla \hat{s}_t(y_{t-1})}^2_*.
\end{align}
Hence, we obtain
\begin{align}
\label{eq:Thm3-At_3rd-term}
    \sum_{t=1}^T \left( \frac{1}{\eta_{t+1}} - \frac{1}{\eta_t} \right)\B_h(x^*,y_t) 
    &= \lambda \sum_{t=1}^T  \frac{\norm{\nabla s_t(y_{t-1}) - \nabla \hat{s}_t(y_{t-1})}^2_*}{\sigma^2} \B_h(x^*,y_t) \leq \lambda \sum_{t=1}^T \B_h(x^*,y_t),
\end{align}
\end{subequations}
where the inequality follows from the fifth item in Assumption \ref{ass:regularity}. In light of the upper bounds derived in equation \eqref{eq:Thm3-At_uppers}, we then infer that
\begin{align}
\label{eq:Th3-At}
   \sum_{t=1}^T A_t 
    &\leq 2\beta R^2 + \sum_{t=1}^T\Big( \lambda\B_h(x^*,y_t) - \frac{1}{\eta_t} \B_h(y_t,x_t) - \alpha \B_h(x^*,x_t) \Big) \leq 2\beta R^2,
\end{align}
where the second inequality follows from Assumption \ref{ass:bregman_reg}. 

\textbf{(Upper bounding $\sum_{t=1}^T B_t$)}
Invoking Lemma \ref{lemma:OptCMD_diff}, we conclude that
$$
\norm{y_t - x_t} \leq \eta_t \norm{\nabla s_t(x_t) - \nabla \hat{s}_t(y_{t-1})}_*,
$$
and as a result,
$$
B_t \leq \eta_t \norm{\nabla s_t(x_t) - \nabla \hat{s}_t(y_{t-1})}_*^2  - \frac{1}{\eta_t}\B_h(x_t,y_{t-1}).
$$
Notice that
\begin{align*}
    B_t 
    &\leq 2\eta_t \norm{\nabla s_t(x_t) - \nabla s_t(y_{t-1})}^2_* + 2\eta_t\norm{\nabla s_t(y_{t-1}) - \nabla \hat{s}_t(y_{t-1})}^2_*  - \frac{1}{\eta_t} \B_h(x_t,y_{t-1}) \\
    &\leq 2\eta_t \beta^2 \norm{x_t - y_{t-1}}^2 + 2\eta_t\norm{\nabla s_t(y_{t-1}) - \nabla \hat{s}_t(y_{t-1})}^2_* - \frac{1}{\eta_t} \B_h(x_t,y_{t-1}).
\end{align*}
where we made use of the identity $\norm{a-b}^2 \leq 2\norm{a-c}^2 + 2\norm{c-b}^2$ and the $\beta$-smoothness of $s_t$. Using Lemma $-\B_h(x,y) \leq - \frac{1}{2}\norm{x-y}^2$, we arrive at
\begin{align*}
    B_t & \leq \left( 2\eta_t \beta^2 - \frac{1}{2\eta_t} \right) \norm{x_t - y_{t-1}}^2 + 2\eta_t\norm{\nabla s_t(y_{t-1}) - \nabla \hat{s}_t(y_{t-1})}^2_*.
\end{align*}
From the definition of $\eta_t$, we have that $2\eta_t \beta^2 - \frac{1}{2\eta_t} \leq 0 \ \forall t\geq 1$, and summing $B_t$ over $t=1,\ldots,T$ yields
\begin{align*}
    \sum_{t=1}^T B_t 
    &\leq  2\sum_{t=1}^T \eta_t\norm{\nabla s_t(y_{t-1}) - \nabla \hat{s}_t(y_{t-1})}^2_* \\
    &\leq 2\sum_{t=1}^T \eta_{t+1}\norm{\nabla s_t(y_{t-1}) - \nabla \hat{s}_t(y_{t-1})}^2_* + 2\sum_{t=1}^T (\eta_t-\eta_{t+1})\norm{\nabla s_t(y_{t-1}) - \nabla \hat{s}_t(y_{t-1})}^2_*.
\end{align*}
By virtue of the fifth item in Assumption \ref{ass:regularity}, it follows that
\begin{align*}
    \sum_{t=1}^T (\eta_t-\eta_{t+1})\norm{\nabla s_t(y_{t-1}) - \nabla \hat{s}_t(y_{t-1})}^2_* \leq \sigma^2\sum_{t=1}^T (\eta_t-\eta_{t+1}) = \sigma^2 \left( \eta_1 - \eta_{T+1} \right) \leq \sigma^2 \eta_1 = \frac{\sigma^2}{2 \beta}.
\end{align*}
Based on the above analyses, it is straightforward to see that
\begin{align}
\label{eq:dyn_B_t}
    \sum_{t=1}^T B_t \leq \frac{\sigma^2}{\beta} + 2\sum_{t=1}^T \eta_{t+1}\norm{\nabla s_t(y_{t-1}) - \nabla \hat{s}_t(y_{t-1})}^2_*.
\end{align}
Notice that by the definition $\eta_t$, we have
\begin{align*}
    \norm{\nabla s_t(y_{t-1}) - \nabla \hat{s}_t(y_{t-1})}^2_* = \frac{\sigma^2}{\lambda}\left( \frac{1}{\eta_{t+1}} - \frac{1}{\eta_t} \right),
\end{align*}
and as a result, 
\begin{align*}
    \sum_{t=1}^T B_t 
    \leq \frac{\sigma^2}{\beta} + \frac{2\sigma^2}{\lambda}\sum_{t=1}^T \eta_{t+1}\left( \frac{1}{\eta_{t+1}} - \frac{1}{\eta_t} \right).
\end{align*}
Using Lemma \ref{lemma:log_bound} to upper bound the RHS of the inequality above, we have that 
\begin{align*}
    \sum_{t=1}^T B_t &
    \leq \frac{\sigma^2}{\beta} + \frac{2\sigma^2}{\lambda}\sum_{t=1}^T \log \left( \frac{\eta_{t+1}^{-1}}{\eta_t^{-1}} \right) = \frac{\sigma^2}{\beta} + \frac{2\sigma^2}{\lambda}  \left( \log\left( \frac{1}{\eta_{T+1}} \right) - \log\left( \frac{1}{\eta_1} \right) \right) \\
    & = \frac{\sigma^2}{\beta} + \frac{2\sigma^2}{\lambda} \log\left( \frac{\eta_{1}}{\eta_{T+1}} \right),
\end{align*}
which immediately yields
\begin{align}
\label{eq:Th3-Bt}
    \sum_{t=1}^T B_t 
\leq \frac{\sigma^2}{\beta} + \frac{2\sigma^2}{\lambda} \log\left( 1 + \frac{\lambda}{2\beta \sigma^2}D'_T \right).
\end{align}

\textbf{(Regret upper bound)} In light of \eqref{eq:Th3-At} and \eqref{eq:Th3-Bt}, we arrive at
\begin{align*}
\Regret_T \leq 2\beta R^2 + \frac{\sigma^2}{\beta} + \frac{2\sigma^2}{\lambda} \log\left( 1 + \frac{\lambda}{2\beta \sigma^2}D'_T \right).
\end{align*}
The lemma immediately follows.
\end{proof}

Finally, for the euclidean case (i.e. $\B_h(x,y) = \frac{1}{2}\norm{x-y}^2_2$) and choosing $\lambda = \alpha/2$, we have
\begin{align*}
\lambda \B_h(x^*,y_t) - \frac{1}{\eta_t}\B_h(y_t,x_t) - \alpha\B_h(x^*,x_t) 
&= \frac{\alpha}{4}\norm{x^*-y_t}^2_2 - \frac{1}{2\eta_t}\norm{y_t-x_t}^2_2 - \frac{\alpha}{2}\norm{x^*-x_t} \\
&\leq \frac{\alpha}{2}\norm{x_t-y_t}^2_2 - \frac{1}{2\eta_t}\norm{y_t-x_t}^2_2 \leq 0,
\end{align*}
where the second inequality follows from $\norm{a-b}^2 \leq 2\norm{a-c}^2 + 2\norm{c-b}^2$ and the third inequality follows from $\eta_t^{-1} \geq \beta \geq \alpha$. Thus, we have shown that Assumption \ref{ass:bregman_reg} holds for all $t$, and Theorem \ref{theorem:OptCMD_strong} follows from Lemma \ref{lemma:OptCMD_strong_general}. $\Box$

\subsubsection{Proof of Theorem \ref{theorem:OptDCMD_convex}}

Let $u_t \in \X$. Following similar steps to the ones from the proof of Theorem \ref{theorem:OptCMD_convex}, one can show that
\begin{align*}
    f_t(x_t) - f_t(u_t) \leq A_t + B_t + C_t,
\end{align*}
where $A_t$, $B_t$ and $C_t$ are defined as
\begin{align*}
    A_t &:= \|\nabla s_t(x_t) - \nabla \hat{s}_t(y_{t-1})\|_*\|x_t - \tilde{y}_t\| - \frac{1}{2\eta_t}\B_h(\tilde{y}_t,x_t) - \frac{1}{\eta_t}\B_h(x_t,y_{t-1}),
\end{align*}
$$
B_t := \frac{1}{\eta_t}\big( \B_h(u_t,y_{t-1}) - \B_h(u_t,\tilde{y}_t) \big) \quad \text{and} \quad C_t := \Delta_t- \frac{1}{2\eta_t}\B_h(\tilde{y}_t,x_t).
$$
Moreover, still following steps similar to the proof of Theorem \ref{theorem:OptCMD_convex}, we can show that
\begin{equation}
    \label{eq:A_bound_dyna}
    \sum_{t=1}^T A_t \leq \left(4 + \frac{R^2}{2} \right)\left(V_T' + \sqrt{4\beta^2 + D_T'}\right).
\end{equation}

\textbf{(Upper bounding $\sum_{t=1}^T B_t$)}

Adding $\pm \frac{1}{\eta_t} \B_h(u_{t+1},y_t)$ and $\pm\frac{1}{\eta_t}\B_h(\Phi_t(u_t),y_t)$ to $B_t$ and summing the result over $t=1,\ldots,T$, we get
\begin{align*}
    \sum_{t=1}^T B_t 
    = \sum_{t=1}^T \frac{1}{\eta_t}\Big(&\B_h(u_t,y_{t-1}) - \B_h(u_{t+1},y_t) + \B_h(u_{t+1},y_t) \\
    &\quad - \B_h(\Phi_t(u_t),y_t) + \B_h(\Phi_t(u_t),\Phi_t(\tilde{y}_t)) - \B_h(u_t,\tilde{y}_t)\Big),
\end{align*}
where we made use of $y_t=\Phi_t(\tilde{y}_t)$. By Assumption \ref{ass:lipschtz_breg}, it holds that for some positive real $\gamma$ 
\begin{align*}
    \B_h(u_{t+1},y_t) - \B_h(\Phi_t(u_t),y_t) \leq \gamma \norm{u_{t+1}-\Phi_t(u_t)}.
\end{align*}
By Assumption \ref{ass:nonexpansiveness_Phi}, it further holds that 
\begin{align*}
    \B_h(\Phi_t(u_t),\Phi_t(\tilde{y}_t)) - \B_h(u_t,\tilde{y}_t) \leq 0.
\end{align*}
By virtue of the last two inequalities, we arrive at
\begin{align}
    \label{eq:Thm1_At_initial}
    \sum_{t=1}^T B_t \leq \sum_{t=1}^T \frac{1}{\eta_t}\big(&\B_h(u_t,y_{t-1}) - \B_h(u_{t+1},y_t) + \gamma\norm{u_{t+1}-\Phi_t(u_t)}\big).
\end{align}
Next, observe that
\begin{align*}
    \sum_{t=1}^T \frac{1}{\eta_t}\big(\B_h(u_t,y_{t-1})  - \B_h(u_{t+1},y_t)\big)
    &\leq \frac{1}{\eta_1} \B_h(u_1,y_0)  + \sum_{t=2}^T \left(\frac{1}{\eta_t} - \frac{1}{\eta_{t-1}} \right)\B_h(u_{t},y_{t-1}) \\
    &\leq \frac{R^2}{\eta_1} + R^2\sum_{t=2}^T \left(\frac{1}{\eta_t} - \frac{1}{\eta_{t-1}} \right) \leq \frac{R^2}{\eta_T} ,   
\end{align*}
where we made use Assumption \ref{ass:regularity}. Considering inequality \eqref{eq:Thm1_At_initial}, one can conclude based on the above arguments that
\begin{align}
\label{eq:B_bound_dyna}
\sum_{t=1}^T B_t
&\leq \frac{R^2}{\eta_T} + \sum_{t=1}^T \left(\frac{\gamma}{\eta_t} \norm{u_{t+1}-\Phi_t(u_t)}\right) \leq \frac{R^2}{\eta_T} + \frac{\gamma}{\eta_T}\sum_{t=1}^T \norm{u_{t+1}-\Phi_t(u_t)} =\frac{1}{\eta_T} (R^2+\gamma C'_T) ,    
\end{align}
where the second inequality follows from $\eta_{t}\geq \eta_{t+1}$. Putting \eqref{eq:A_bound_dyna} and \eqref{eq:B_bound_dyna} together, we arrive at
\begin{align}
    \Regret_T 
    &\leq \sum_{t=1}^T \left( A_t + B_t + C_t \right) \nonumber \\
    &\leq \left(4 + \frac{R^2}{2} \right)\left(V_T' + \sqrt{4\beta^2 + D_T'}\right) + (R^2+\gamma C'_T)\frac{1}{\eta_T} + \sum_{t=1}^T \left(\Delta_t- \frac{1}{2\eta_t}\B_h(y_t,x_t)\right) \nonumber \\
    &\leq \left( 4 + \frac{3R^2}{2} + \gamma C'_T \right)\left(V_T'+\sqrt{4\beta^2 + D_T'}\right) + \sum_{t=1}^T \left(\Delta_t - \frac{1}{2\eta_t}\B_h(y_t,x_t)\right) \label{eq:bound_12_dyna} \\
    &\leq \left( 5 + \frac{3R^2}{2} + \gamma C'_T \right)\left(V_T'+\sqrt{4\beta^2 + D_T'}\right) \label{eq:bound_1_dyna},
\end{align}
where we used the definition of $\eta_{T+1}$ and the fact that $V_T' = \sum_{t=1}^T |\Delta_t|$. Define $c := 5 + \frac{3R^2}{4} +\gamma C'_T$ and $\lambda_T := \sum_{t=1}^T \left( |\Delta_t| -  \frac{1}{4c\eta_t}\|x_t - y_t\|^2\right)$. By following the same steps of the last part of the proof of Theorem \ref{theorem:OptCMD_convex}, one can show that 
\begin{align}
    \label{eq:bound_2_dyna}
    \Regret_T 
    &\leq c\sqrt{2R^2+2c}\sqrt{\sum_{t=1}^T G_t^2} + \left( 4 + \frac{3R^2}{4}+\gamma C'_T \right)\sqrt{4\beta^2 + D_T'}.
\end{align}
Finally, combining \eqref{eq:bound_1_dyna} and \eqref{eq:bound_2_dyna}, arrive at
\begin{align*}
    \Regret_T 
    &\leq \min\left\{ \left( 5 + \frac{3R^2}{4} + \gamma C'_T \right)V_T', \  c\sqrt{2R^2+2c}\sqrt{\sum_{t=1}^T G_t^2} \right\}  + \left( 4 + \frac{3R^2}{4} + \gamma C'_T \right)\sqrt{4\beta^2 + D_T'} \\
    &= O\left(\left(1 + C'_T \right)\left(1 + \sqrt{D_T'} + \min\left\{ V_T', \sqrt{(1 + C'_T) T} \right\}\right)\right).
\end{align*}
This concludes the proof. $\Box$

\subsection{Proof of Theorem \ref{theorem:OptDCMD_implicit}}

Similarly to the beginning of the proof of Theorem \ref{theorem:OptDCMD_convex}, one can show that
\begin{align*}
    f_t(x_t) - f_t(u_t) 
    &\leq B_t + C_t,
\end{align*}
where $B_t$ and $C_t$ are defined as
$$
B_t := \frac{1}{\eta_t}\big( \B_h(u_t,y_{t-1}) - \B_h(u_t,\tilde{y}_t) \big) \quad \text{and} \quad C_t := \Delta_t- \frac{1}{\eta_t}\B_h(\tilde{y}_t,x_t).
$$
Next, continuing following the proof of Theorem \ref{theorem:OptDCMD_convex}, we have that
$$
\sum_{t=1}^T B_t
\leq \frac{R^2}{\eta_T} + \sum_{t=1}^T \frac{\gamma}{\eta_t} \norm{u_{t+1}-\Phi_t(u_t)} \leq \frac{1}{\eta_T}\left( R^2 + \gamma \tau \right).
$$
Thus, we have that
\begin{align}
    \Regret_T 
    &\leq \sum_{t=1}^T \left(B_t + C_t \right) \leq \frac{1}{\eta_T}\left( R^2 + \gamma \tau \right) + \sum_{t=1}^T \left(\Delta_t- \frac{1}{\eta_t}\B_h(y_t,x_t)\right) \nonumber \\
    &\leq \left(\frac{R^2}{\tau} + \gamma +1 \right)V_T' - \sum_{t=1}^T\frac{1}{\eta_t}\B_h(y_t,x_t). \nonumber
\end{align}
Again following the steps of  the proof of Theorem \ref{theorem:OptDCMD_convex}, we can alternatively bound the regret by
\begin{align*}
    \Regret_T 
    &\leq c\sqrt{2R^2+2\tau c}\sqrt{\sum_{t=1}^T G_t^2} \leq O(\sqrt{(1 + \tau)T}),
\end{align*}
where $c = \frac{R^2}{\tau} + \gamma + 1$. Combining the two regret bounds, we have
$$
\Regret_T \leq O \left( \min \left\{ V_T', \ \sqrt{(1 + \tau)T} \right\} \right).
$$
This concludes the proof. $\Box$

\subsubsection{Proof of Theorem \ref{theorem:OptDMD_convex_2}}

We start the proof by following similar steps to the ones taken in the proof of Theorem \ref{theorem:OptCMD_strong}. By doing so, we arrive at
\begin{equation} \label{eq:eq1}
\DRegret_T = \sum_{t=1}^T (f_t(x_t) - f_t(u_t)) \leq \sum_{t=1}^T A_t + \sum_{t=1}^T B_t,    
\end{equation}
where
\begin{align*}
    A_t := \|\nabla s_t(x_t) - \nabla \hat{s}_t(y_{t-1})\|_*\|x_t - \tilde{y}_t\| - \frac{1}{\eta_t}\B_h(\tilde{y}_t,x_t) - \frac{1}{\eta_t}\B_h(x_t,y_{t-1}),
\end{align*}
and
\begin{align*}
    B_t & := \frac{1}{\eta_t}(\B_h(u_t,y_{t-1}) - \B_h(u_t,\tilde{y}_t)). 
\end{align*}

\textbf{(Upper bounding $\sum_{t=1}^T A_t$)} We proceed by bounding $\sum_{t=1}^T A_t$ in the sequel. Recall that by definition, $\eta_t \leq 1/(2\beta)$ for all $t$. Thus, by following similar steps as taken in the proof of Theorem \ref{theorem:OptCMD_convex}, we get
$$
\sum_{t=1}^T A_t \leq \frac{R^2}{2\eta_{T+1}} +  2\sum_{t=1}^T \eta_{t+1}\|\nabla s_t(y_{t-1}) - \nabla \hat{s}_t(y_{t-1})\|_*^2.
$$
Recall the definition of $\eta_t$. Invoking lemma Lemma \ref{lemma:sqrt_bound_2}, we arrive at
\begin{equation}
\label{eq:eq3}
\sum_{t=1}^T A_t \leq \frac{R^2}{2\eta_{T+1}} + 4\sqrt{(\theta_T + D'_T)(1 + C'_T)}.  
\end{equation}

\textbf{(Upper bounding $\sum_{t=1}^T B_t$)} Following similar steps as taken in the proof of Theorem \ref{theorem:OptDCMD_convex}, we can bound
$$
\sum_{t=1}^T B_t
\leq \frac{R^2}{\eta_T} + \sum_{t=1}^T \frac{\gamma}{\eta_t} \norm{u_{t+1}-\Phi_t(u_t)}.
$$
Next, notice that
\begin{align*}
\sum_{t=1}^T \frac{\gamma}{\eta_t}\norm{u_{t+1}-\Phi_t(u_t)} 
&= \gamma\sum_{t=1}^T \left( \frac{1}{\eta_t} - \frac{1}{\eta_{t+1}} + \frac{1}{\eta_{t+1}} \right)\norm{u_{t+1}-\Phi_t(u_t)} \\
&= \gamma\sum_{t=1}^T \left( \frac{1}{\eta_t} - \frac{1}{\eta_{t+1}}  \right)\norm{u_{t+1}-\Phi_t(u_t)} + \gamma\sum_{t=1}^T \frac{1}{\eta_{t+1}}\norm{u_{t+1}-\Phi_t(u_t)} \\
&\leq \frac{\gamma R^2}{\eta_1} + \gamma\sum_{t=1}^T \frac{1}{\eta_{t+1}}\norm{u_{t+1}-\Phi_t(u_t)},
\end{align*}
where for the last inequality, we assumed without loss of generality that $\norm{u_{t+1}-\Phi_t(u_t)} \leq R^2$. Following these same steps again, and using the fact that $\eta_1 = \eta_2$, we get
$$
\sum_{t=1}^T \frac{\gamma}{\eta_t}\norm{u_{t+1}-\Phi_t(u_t)} \leq \frac{2\gamma R^2}{\eta_1} + \gamma\sum_{t=1}^T \frac{1}{\eta_{t+2}}\norm{u_{t+1}-\Phi_t(u_t)}.
$$
Recall the definition of $\eta_t$ in Theorem \ref{theorem:OptDMD_convex_2}. Invoking Lemma \ref{lemma:sqrt_bound_2}, we get
$$
\gamma\sum_{t=1}^T \frac{1}{\eta_{t+2}}\norm{u_{t+1}-\Phi_t(u_t)} \leq 2\gamma\sqrt{(\theta_{T+2} + D'_{T+1})(1 + C'_T)}
$$
Back to our upper bound on $A_t$, we now have
\begin{equation} \label{eq:eq2}
\sum_{t=1}^T B_t
\leq \frac{2\gamma R^2}{\eta_1} + \frac{R^2}{\eta_T} + 2\gamma\sqrt{(\theta_{T+2} + D'_{T+1})(1 + C'_T)}.
\end{equation}

\textbf{(Regret upper bound)} Considering equations \eqref{eq:eq1}, \eqref{eq:eq2} and \eqref{eq:eq3}, it holds that
\begin{align*}
\DRegret_T &\leq \frac{2\sigma^2 R^2}{\eta_1} + \frac{3R^2}{2\eta_T} + (4+2\gamma)\sqrt{(\theta_{T+2} + D'_{T+1})(1 + C'_T)}.   
\end{align*}
This concludes the proof. $\Box$

\section{Numerical Experiments}
\label{sec:numerical}

\subsection{Tracking Dynamical Parameters}\label{sec:tracking}

In this section, we employ a strategy based on Algorithm \ref{alg:OptDCMD} in a parameter tracking problem. The scenario presented in this section is based on the numerical experiment of \cite{shahrampour2017distributed}. Denote the parameters to be tracked by $u_t \in \Real^4$.
These parameters have dynamics described by the linear model $u_{t+1} = A u_t + v_t$.
Similarly to \cite{shahrampour2017distributed}, we emphasize that our online learning results hold even when the noise is adversarial with an unknown structure.
For this experiment, we use
$$
A = \begin{bmatrix}
1 & 0.1 & 0 & 0 \\
0 & 1 & 0.1 & 0 \\
0 & 0 & 1 & 0.1 \\
0 & 0 & 0 & 1 \\
\end{bmatrix} 
\quad \text{and} \quad
v_t = 
\begin{cases} 
2 & \text{if } \tilde{v}_t > 0 \\
-1 & \text{if } \tilde{v}_t \leq 0
\end{cases}
$$
where $\tilde{v}_t$ is Gaussian noise with a random covariance matrix, and the inequalities in the definition of $v_t$ are component-wise. The cost at time $t$ is defined as $f_t(x_t) = \frac{1}{2}\norm{x_t - u_t}^2_2 + \|x_t\|_1$, where $s_t(x_t) := \frac{1}{2}\norm{x_t - u_t}^2_2$, $r_t(x_t) := \|x_t\|_1$ and $x_t$ is the output of our tracking algorithm. We assume the Player has access to $\Phi_t(x) = Ax$, which is an approximate model of the dynamics of $u_t$.

To choose its action sequence $\{x_t\}_{t=1}^T$, the Player employs a variation of Algorithm \ref{alg:OptDCMD} with $h(x) = \frac{1}{2}\|x\|^2_2$ (i.e., the euclidean setup), with the difference that in the update rule of $\tilde{y}_t$, we use a constant step-size $\eta_t = 1$. This change was inspired by \cite{mokhtari2016online}, and the fact that $\frac{1}{2}\norm{x_t - u_t}^2_2$ is smooth and strongly convex. For the update rule of $x_t$, we use the step-size defined in Theorem \ref{theorem:OptDCMD_convex} (notice that since the nonsmooth component of the cost $f_t$ is fixed, $V'_t = 0$ for all $t$). We consider the following gradient prediction models:
\begin{enumerate}
    \item \textbf{perfect}: a perfect model $\nabla \hat{s}_t(y_{t-1}) := \nabla s_t(y_{t-1})$;
    \item \textbf{noisy}: a noisy model $\nabla \hat{s}_t(y_{t-1}) := \nabla s_t(y_{t-1}) + w_t$;
    \item \textbf{noisy+bias}: a noisy prediction model plus a bias term $\nabla \hat{s}_t(y_{t-1}) := \nabla s_t(y_{t-1}) + w_t - 1$;
    \item \textbf{previous}: a prediction model that uses the previous cost gradient $\nabla \hat{s}_t(y_{t-1}) := \nabla s_{t-1}(y_{t-1})$;
    \item \textbf{random}: a random prediction model $\nabla \hat{s}_t(y_{t-1}) := w_t$,
\end{enumerate}
where $w_t \sim \mathcal{N}(0, 0.5I)$. 
As a benchmark, we use the Dynamic Mirror Descent (DMD) algorithm of Hall and Willett \cite{hall2013dynamical} with a constant step-size $\eta = 1$ and a dynamic version of Algorithm \ref{alg:OptMD}, which also uses the dynamical model $\Phi_t$ to update the $y_t$ variable. We refer to this algorithm as dynamic \ref{alg:OptMD}.

Denote the regrets of Algorithm \ref{alg:OptDCMD}, the DMD algorithm and the dynamic \ref{alg:OptMD} by $\DRegret_t$(OptDCMD), $\DRegret_t$(DMD) and $\DRegret_t$(d-OptMD), respectively. The experiments are repeated 100 times, and for each experiment, a new trajectory $\{u_t\}_{t=1}^T$ was generated. The shaded areas correspond to one standard deviation for Figure  \ref{fig:regret_track_DMD} and $0.1$ times one standard deviation for Figure \ref{fig:regret_track_dOptMD}. Figure \ref{fig:regret_track_DMD} depicts the difference $\DRegret_t$(OptDCMD)-$\DRegret_t$(DMD). One can observe that all the models that use some kind of information about future gradients (\textbf{perfect}, \textbf{noisy}, \textbf{noisy+bias}) were able to perform better than the benchmark. This shows that indeed Algorithm \ref{alg:OptDCMD} was able to exploit predictive information about the problem. Moreover, model \textbf{previous} and \textbf{random} also perform better than the benchmark on average, showing the robustness of our algorithm against inaccurate gradient predictions. Figure \ref{fig:regret_track_DMD} depicts the difference $\DRegret_t$(OptDCMD)-$\DRegret_t$(d-OptMD). As can be seen, Algorithm \ref{alg:OptDCMD} performs better than the benchmark for all predictions models, illustrating the advantage of the composite updates Algorithm \ref{alg:OptDCMD} compared with Algorithm \ref{alg:OptMD}.

\begin{figure*}
\centering
\subfloat[\ref{alg:OptDCMD} versus DMD.]{\label{fig:regret_track_DMD}\includegraphics[scale=0.55]{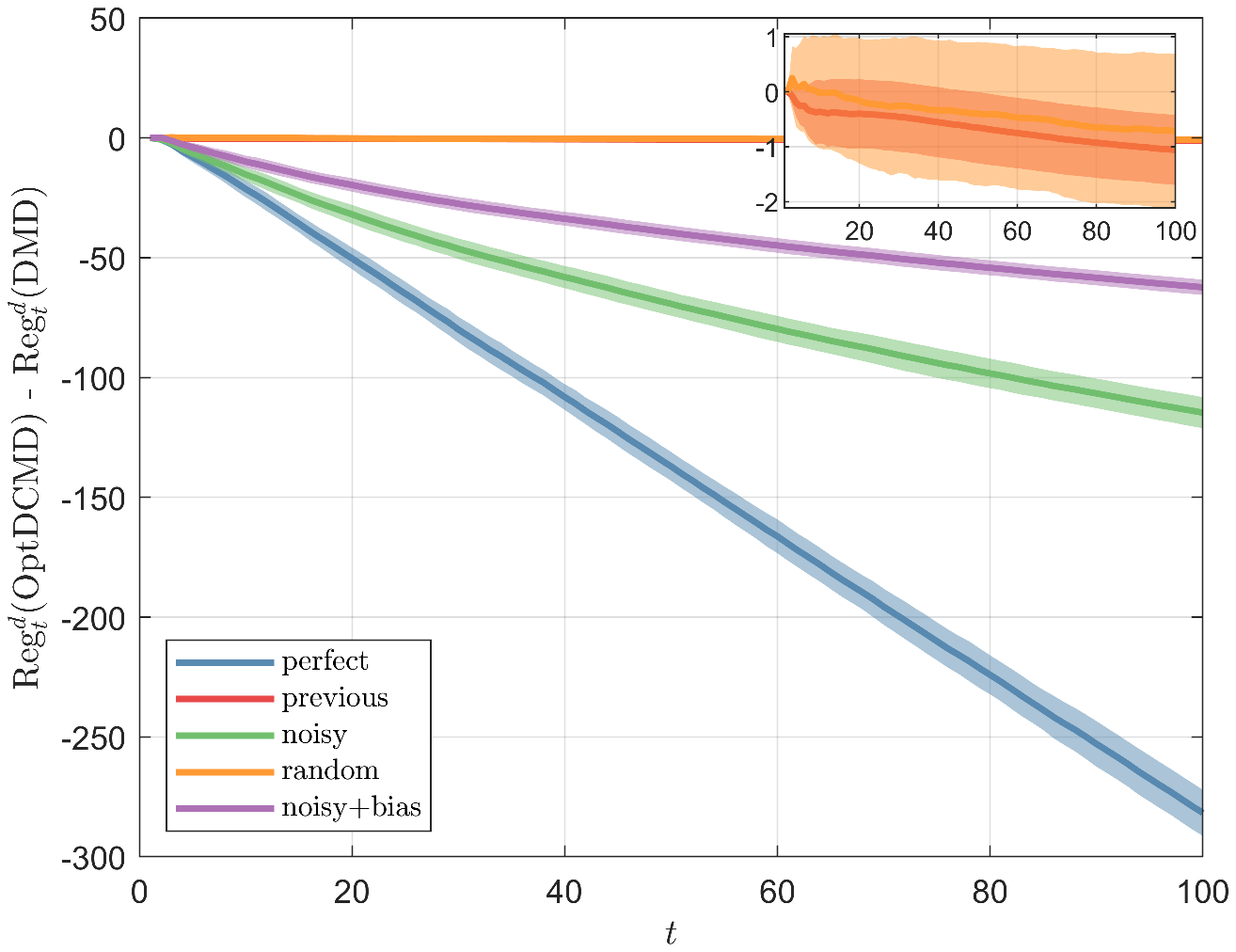}}
\subfloat[\ref{alg:OptDCMD} versus dynamic \ref{alg:OptMD}.]{\label{fig:regret_track_dOptMD}\includegraphics[scale=0.55]{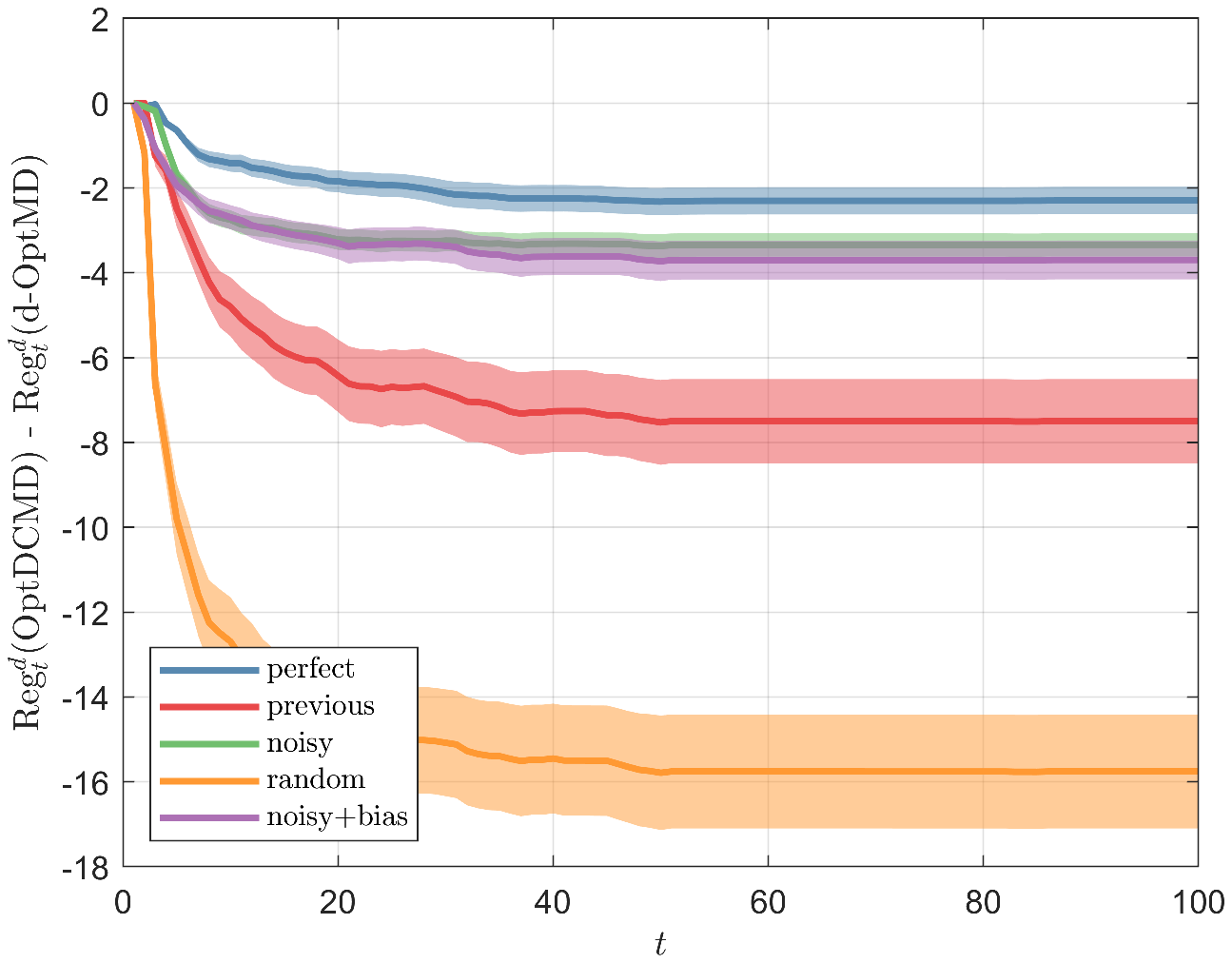}} 
\caption{Regret difference between the DMD algorithm, the dynamic version of Algorithm \ref{alg:OptMD} and Algorithm \ref{alg:OptDCMD}, for different gradient prediction models.}
\label{fig:regret_track}
\end{figure*}

\subsection{Portfolio Selection}
\label{sec:portfolio}

In this section, we apply the result of Theorem \ref{theorem:OptCMD_convex} in a portfolio selection problem. Suppose that an investor (or the Player) has $n$ assets in a Market (or Nature). Let the Player's action $x$ be a probability distribution over $n$ assets. The action set $\X$ is thus $\Delta_n := \{ x \in \Real^n: x(i) \geq 0,  \sum_{i=1}^n x(i) = 1 \}$. Let the return of an asset at round $t$ be the ratio of the value of the asset between rounds $t$ and $t+1$. At round $t$, Nature chooses a strictly positive return vector $r_t \in \Real^n_{>0}$ such that each entry of $r_t$ corresponds to the return of an asset. The Player's wealth ratio between rounds $t$ and $t+1$ is $\inner{r_t}{x_t}$. Let the Player's gain at round $t$ be $\log(\inner{r_t}{x_t})$. In a game of $T$ rounds, the goal of the Player is to maximize $\sum_{t=1}^T \log(\inner{r_t}{x_t})$ or, equivalently, to minimize $\sum_{t=1}^T -\log(\inner{r_t}{x_t})$. Hence, we have $f_t(x)=-\log(\inner{r_t}{x})$ and $\nabla f_t(x) = -r_t/\inner{r_t}{x}$, for all $x\in\X$ \footnote{See \cite{hazan2016introduction} for a more detailed description of this problem.}. Notice that in this scenario, there is no nonsmooth component in the cost $f_t$, and Algorithm \ref{alg:OptCMD} reduces to Algorithm \ref{alg:OptMD}.

We assume that the Player has prediction models of the return vector $r_t$, denoted by $\hat{r}_t$.
Thus, in light of the approaches proposed in this paper, we define
\begin{equation}\label{eq:Mt}
    \nabla \hat{f}_t(y_{t-1}) := -\frac{\hat{r}_t}{\inner{\hat{r}_t}{ y_{t-1}}}.
\end{equation}
In what follows, we show how the Player can employ Algorithm \ref{alg:OptMD} to decide its action sequence $\{x_t\}_{t=1}^T$ considering the static regret \eqref{eq:regret}. Since the costs are convex, the Player uses the step-size rule of Theorem \ref{theorem:OptCMD_convex} in Algorithm \ref{alg:OptMD} (with $V_t = 0$). We assume the return of each asset at each time $t$ is bounded as $r_{\text{min}} \leq r_t \leq r_{\text{max}}$ (component-wise). By assuming $r_{\text{min}} = 0.5$ and $r_{\text{max}} = 1.5$, we can set the smoothness parameter $\beta = 9$. Since $\Delta_n$ is the $n$-dimensional simplex, we let $h(x)$ be the \emph{negative entropy} function $\sum^n_{i=1}x(i)\log(x(i))$. Observe that $h$ is $1$-strongly convex w.r.t. $\norm{\cdot}_1$ \cite{bubeck2015convex}. We consider the following prediction models for the returns vector:
\begin{enumerate}
    \item \textbf{MA(k)}: a Moving Average prediction model  model $\tilde{r}_t := \frac{1}{k} \sum_{i=1}^k r_{t-i}$;
    \item \textbf{previous}: a model that uses the previous return vector as its prediction $\tilde{r}_t := r_{t-1}$;
    \item \textbf{noisy}: a noisy, unbiased predictor model of the true returns vector $\tilde{r}_t := r_t + v_t$, where $v_t \sim \mathcal{N}(0,0.3)$;
    \item \textbf{random}: a random predictor, where the entries of $\tilde{r}_t$ are chosen uniformly between $r_{\text{min}}$ and $r_{\text{max}}$.
    \item \textbf{recursiveLS(k)}: for each stock, we have a prediction model of the form $\tilde{r}_t = w_1r_{t-1} + w_2r_{t-2} + \dots + w_kr_{t-k} + w_{k+1}$, where the weights $w_1, \dots, w_{k+1}$ are updated online, using a recursive least squares algorithm.
\end{enumerate}

However, instead of using the output of these models directly into Equation \eqref{eq:Mt}, we will use $\hat{r}_t = g(\tilde{r}_t)$. The function $g(r)$ is defined as
$$
g(r) := 
\begin{cases} 
r_{\text{max}} & \text{if } r > 1 \\
1 & \text{if } r = 1 \\
r_{\text{min}} & \text{if } r < 1 
\end{cases}
$$
and is applied component-wise for vector inputs. The interpretation behind passing the predictions $\tilde{r}_t$ through $g$ is that, instead of using the exact predictions given by our models, we use $\tilde{r}_t$ only as an indication if a given stock is predicted to increase or decrease its value in the next round.

To simulate a stock market, we use six real-world datasets: NYSE(O), NYSE(N), DJIA, TSE, SP500, and MSCI. A detailed description of these datasets can be found in \cite{li2015moving}. Let the number of assets of each dataset be $N$. As a benchmark of each experiment, we employ the \emph{Constant Uniform Portfolio} (CUP) strategy, that is, a Player that chooses $x_t = [1/N, \dots, 1/N]$, for all $t\in[T]$. For the datasets considered in this experiment, the CUP strategy performed better than the Algorithm \ref{alg:OMD}, for any $\eta_t > 0$ and $x_0 = [1/N, \dots, 1/N]$. 

Denote the regrets of Algorithm \ref{alg:OptMD} and CUP strategies by  $\Regret_T$(OptMD) and $\Regret_T$(CUP), respectively.
Figure \ref{fig:regret_portifolio} depicts the difference $\Regret_t$(OptMD)$-\Regret_t$(CUP) for each considered dataset. 
The experiment was repeated 10 times and the shaded areas correspond to one standard deviation. 
As expected, for all datasets, the \textbf{noisy} model achieved the best performance, since it uses information of $r_t$ in the prediction $\hat{r}_t$. More interestingly, we notice that for all datasets except DJIA, the \textbf{recursiveLS(6)} prediction model performed better than all other models.
Moreover, this model also performed better than the CUP benchmark strategy.
In other words, at time $t$, we were able to generate and exploit the predictive information about the return of each stock, using only information available up to time $t-1$.
Another interesting conclusion we can draw from Figure \ref{fig:regret_portifolio} is that, in general, using either the previous return or a simple moving average as predictions lead to poor performance for the algorithm. 
Finally, when using the \textbf{random} models (i.e., gradient predictions uncorrelated with the true gradients), Algorithm \ref{alg:OptMD} performed generally similarly to the CUP benchmark strategy.
This indicates that our approach can also be robust to bad gradient predictions (see Remark \ref{remark:OptCMD_convex}).

\begin{figure*}
\centering
\subfloat[NYSE(N).]{\label{fig:regret_diff_portifolio_dataset3}\includegraphics[scale=0.4]{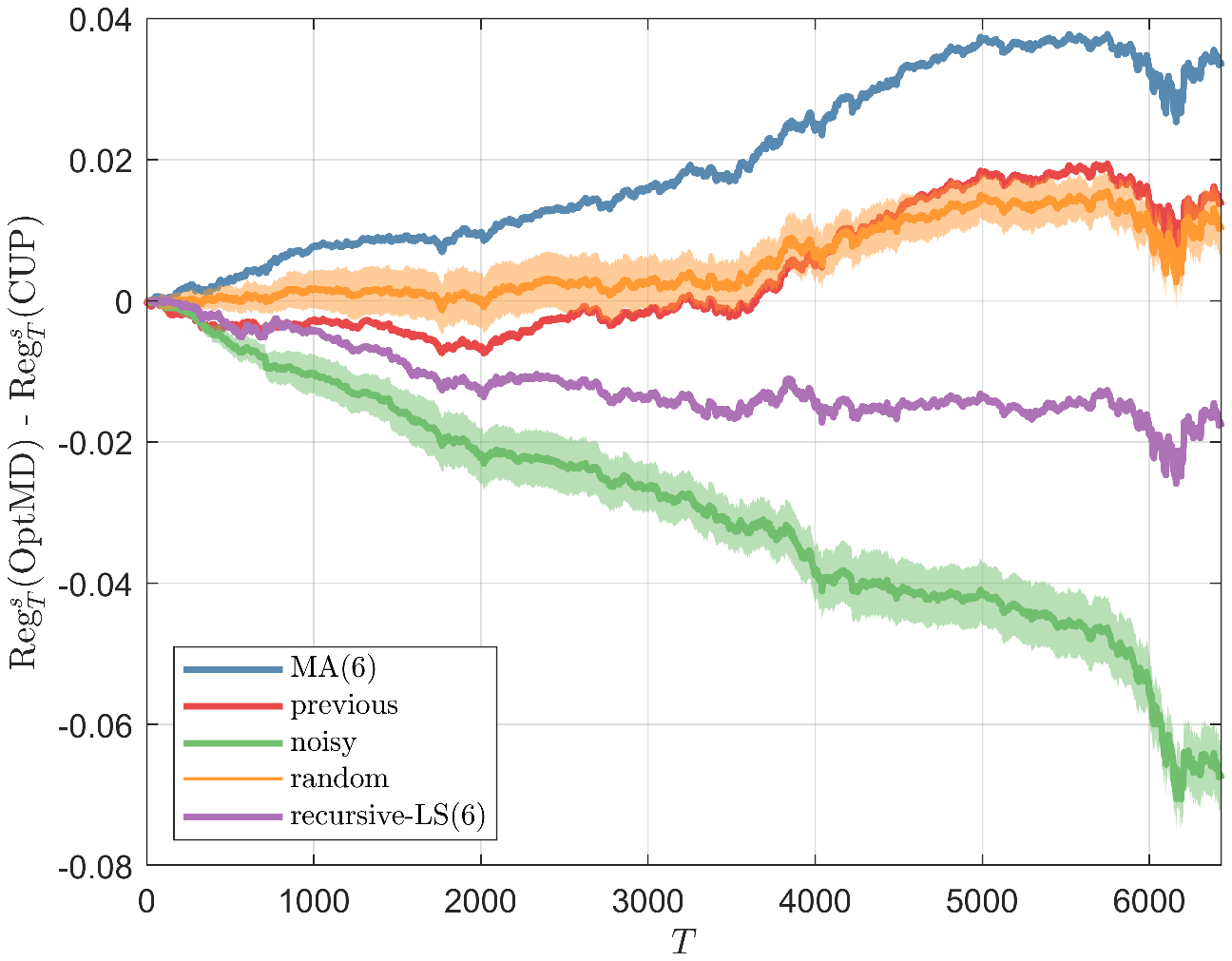}}
\subfloat[NYSE(O).]{\label{fig:regret_diff_portifolio_dataset4}\includegraphics[scale=0.4]{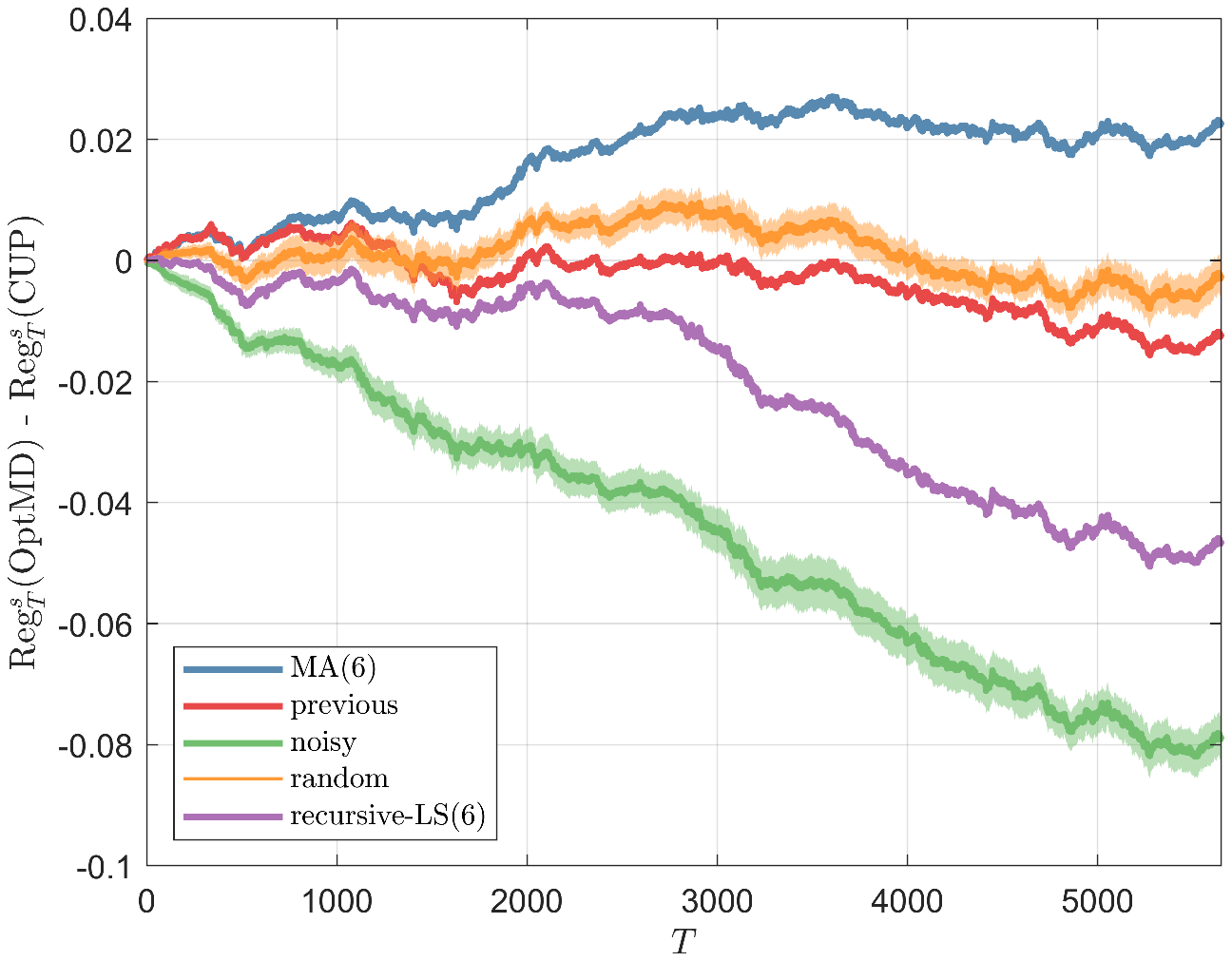}} 
\subfloat[DJIA.]{\label{fig:regret_diff_portifolio_dataset5}\includegraphics[scale=0.4]{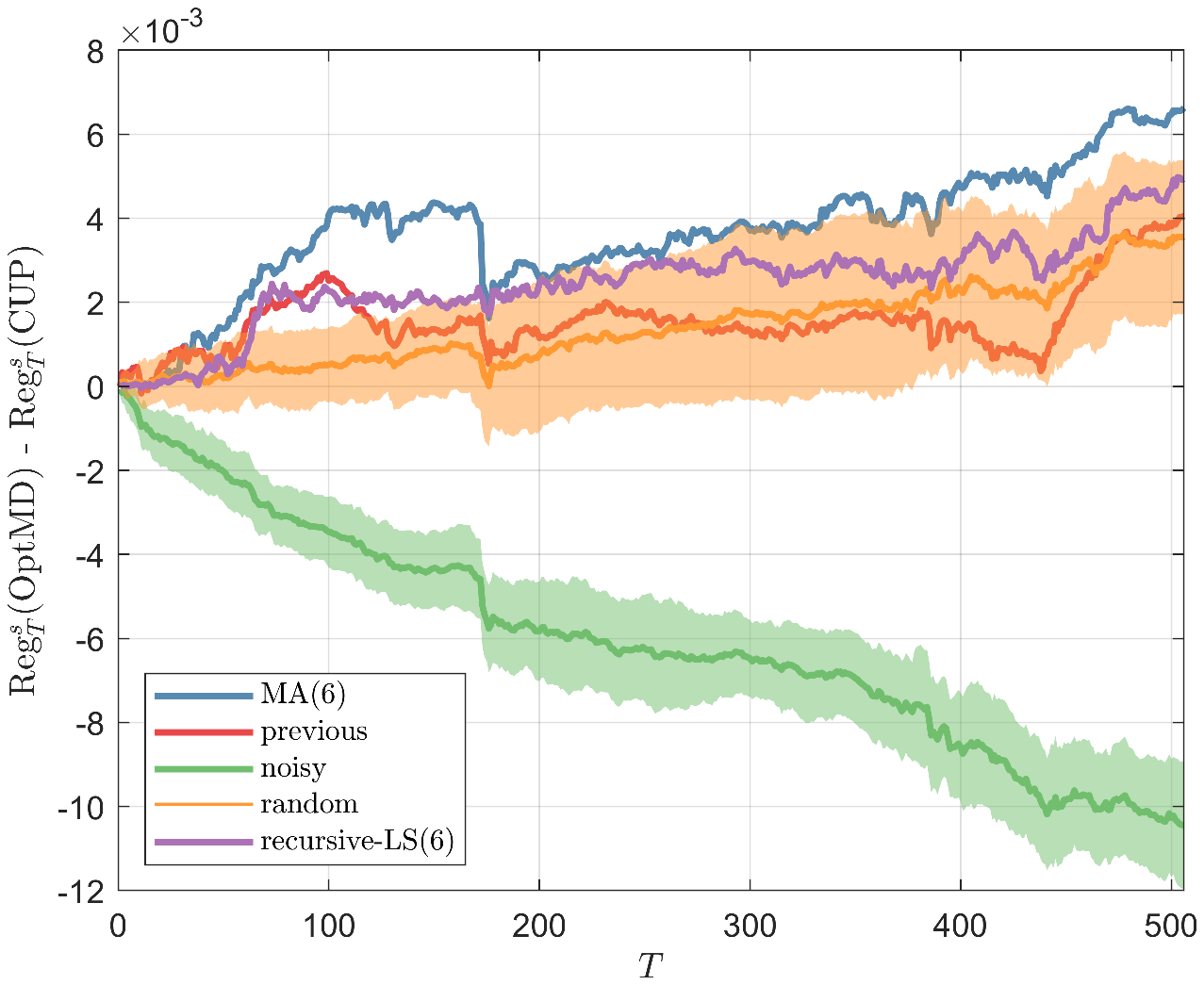}} \\
\subfloat[MSCI.]{\label{fig:regret_diff_portifolio_dataset6}\includegraphics[scale=0.4]{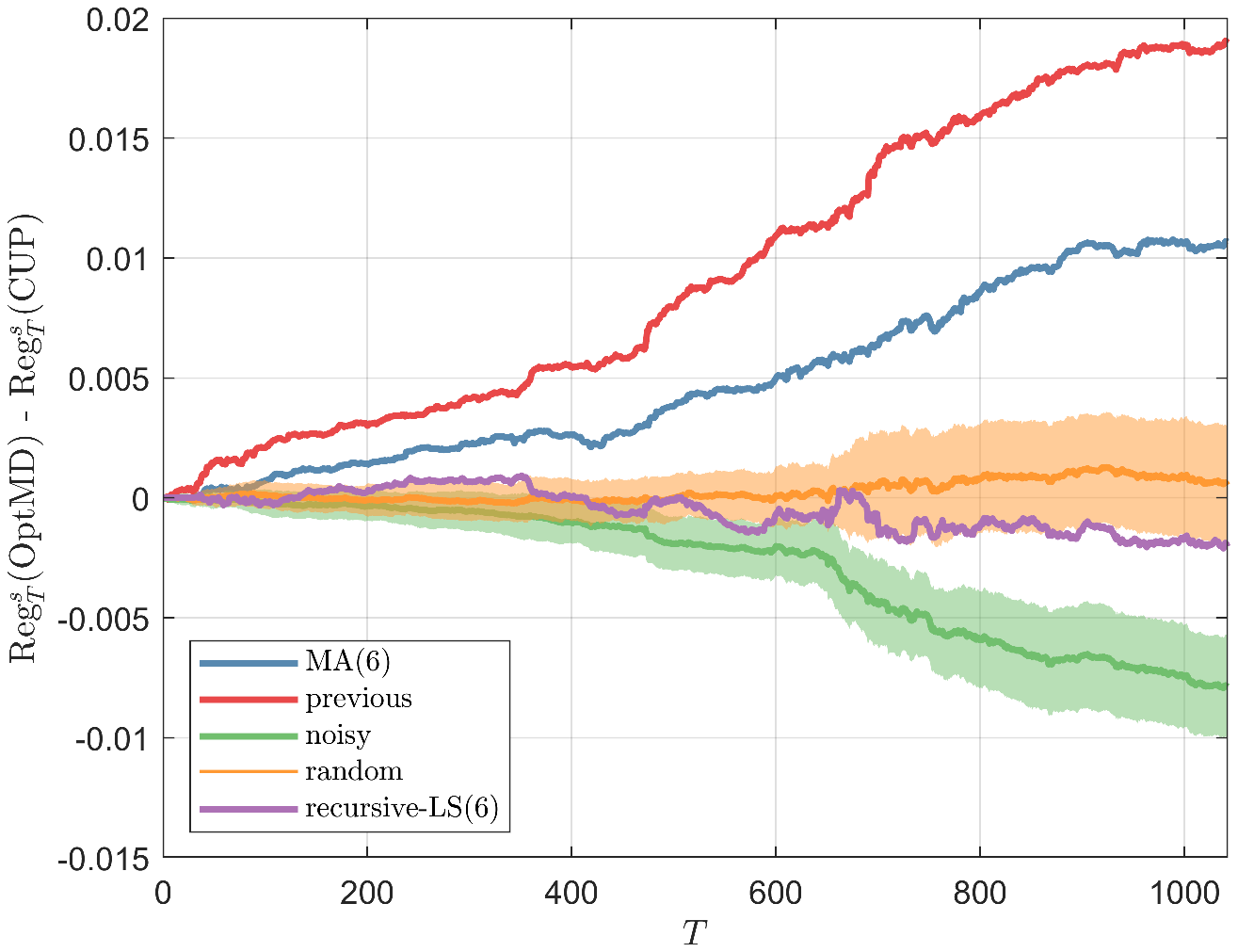}}
\subfloat[SP500.]{\label{fig:regret_diff_portifolio_dataset7}\includegraphics[scale=0.4]{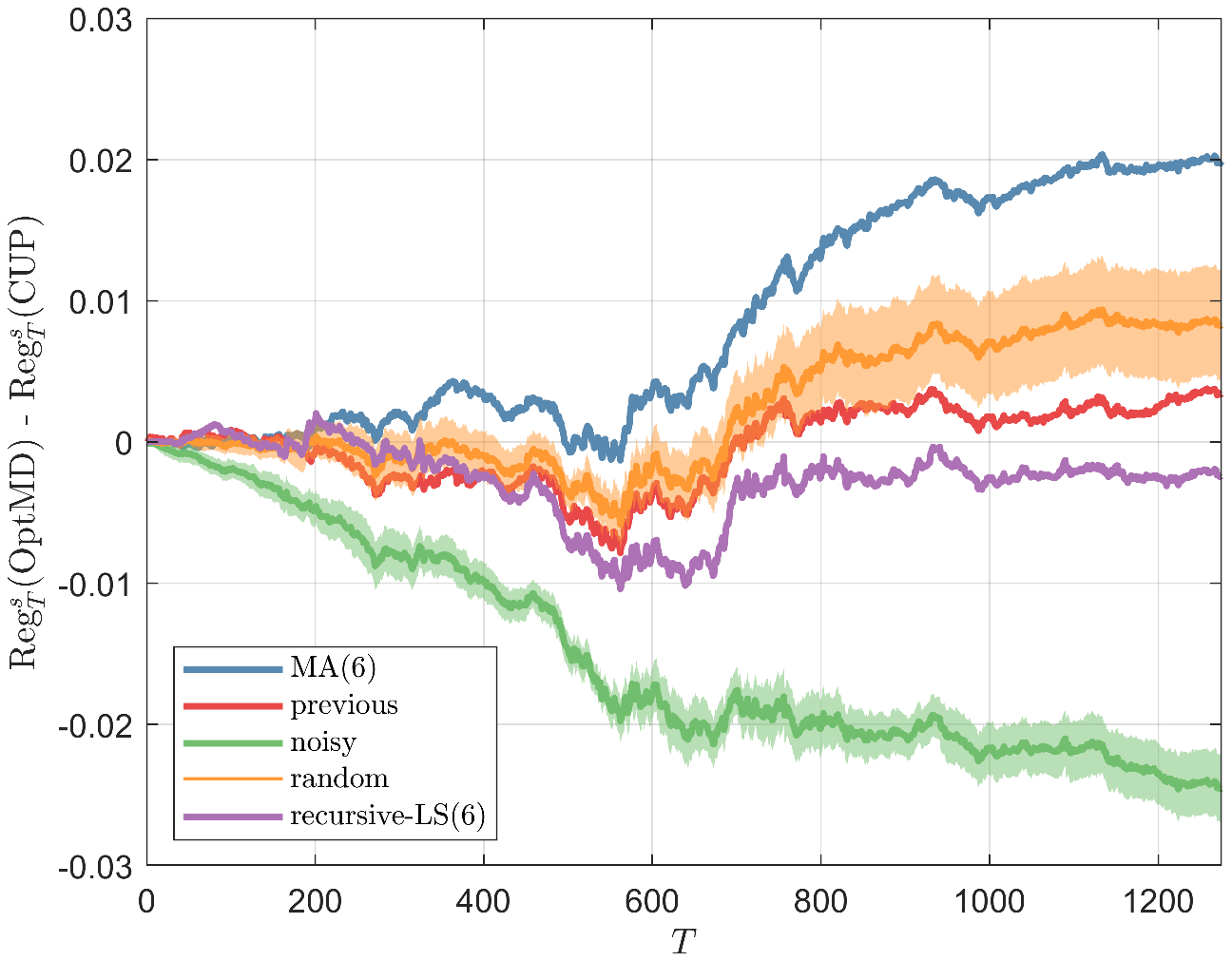}}
\subfloat[TSE.]{\label{fig:regret_diff_portifolio_dataset8}\includegraphics[scale=0.4]{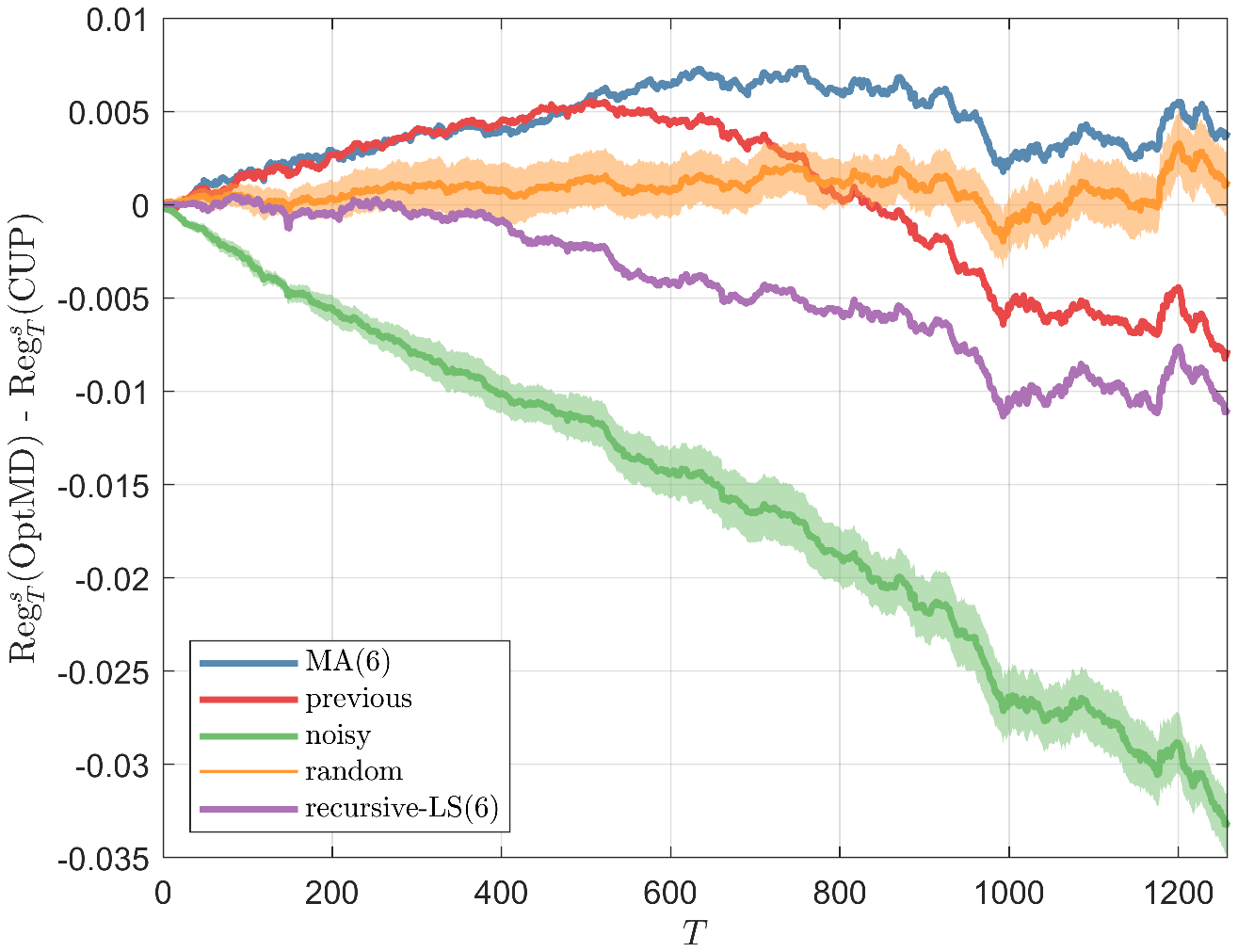}}
\caption{Algorithm \ref{alg:OptMD} applied to the Portfolio Selection problem.}
\label{fig:regret_portifolio}
\end{figure*}

\appendix

\section{Literature Landscape and Summary of Results}
\label{app:tables}

\renewcommand{\arraystretch}{1.5}

In Table \ref{table:comparison}, we present our work in the OCO literature with respect to computational (i.e., not composite costs vs. composite costs) and information (i.e., no predictions vs. with predictions) features of OCO problems. For a more detailed discussion of the literature, see Section \ref{sec:introduction}.

\newpage

{\small 
\begin{table}
\centering
\begin{tabular}{ c || c | c |} 
    \cline{2-3} 
    & \makecell{No predictions \\ $\nabla \hat{s}_t = 0, ~\hat{r}_t = 0$} & \makecell{With predictions \\ $\nabla \hat{s}_t \neq 0$ (and $ \hat{r}_t \neq 0$)} \\
    \hline\hline
    \multicolumn{1}{|l||}{\makecell{Not composite \\ $r_t = 0$}} & \cite{zinkevich2003online, hazan2007logarithmic} & \cite{rakhlin2013online, rakhlin2013optimization, ho2019exploiting, jadbabaie2015online, chang2021online} \\
    \hline
    \multicolumn{1}{|l||}{\makecell{Composite \\ $r_t \neq 0$}} & \cite{duchi2010composite, kulis2010implicit, campolongo2020temporal, campolongo2021closer} & \textbf{this work} \\
    \hline
\end{tabular}
\caption{Examples of OCO literature considering composite and prediction features.
}
\label{table:comparison}
\end{table}
}

Tables \ref{table:perfect_func_pred} and \ref{table:func_pred} summarize the contributions of this work concerning the static regret bounds, presenting a comparison for different cases of gradient predictions. In particular, Table \ref{table:perfect_func_pred} concerns the case of \textit{perfect} function predictions, whereas Table \ref{table:func_pred} concerns the case of \textit{general} function prediction. For a more detailed discussion of these results, see Remarks \ref{remark:OptCMD_convex} and \ref{remark:OptCMD_strong}.



{\small 
\begin{table}[h!]
\begin{tabular}{|cc||c|c|c|}
\hline
\multicolumn{2}{|c||}{\makecell{Perfect function prediction \\ $\hat{r}_t = r_t$}} & \makecell{General case \\ (\textbf{this work})} & Worst-case & Perfect prediction \\
\hline\hline
\multicolumn{2}{|c||}{Gradient prediction} & $\nabla \hat{s}_t$ & $\nabla \hat{s}_t = $ any & $\nabla \hat{s}_t = \nabla s_t$ \\ 
\hline
\multicolumn{1}{|c|}{\multirow{2}{*}{Convex costs}} & $\eta_t$ & $1/\sqrt{D'_{t-1} + 4\beta^2}$ & $ O(1/\sqrt{t})$ & $1/(2\beta)$ \\ \cdashline{2-5}
\multicolumn{1}{|c|}{} & Regret & \makecell{$O(1 + \sqrt{D'_T})$ \\ (Thm. \ref{theorem:OptCMD_convex})} & $O(\sqrt{T})$ \cite{zinkevich2003online} & $O(1)$ \cite{ho2019exploiting} \\ 
\hline
\multicolumn{1}{|c|}{\multirow{2}{*}{\makecell{Strongly convex \\ costs}}} & $\eta_t$ & $O(1/(D'_{t-1}+ 2\beta))$ & $ O(1/t)$ & $1/(2\beta)$ \\
\cdashline{2-5} 
\multicolumn{1}{|c|}{} & Regret & \makecell{$O(1 + \log(1+ D'_T))$ \\ (Thm. \ref{theorem:OptCMD_strong})} & $O(\log(T))$ \cite{hazan2007logarithmic} & $O(1)$ \cite{ho2019exploiting} \\
\hline
\end{tabular}
\caption{Generality of the static regret bounds in the case of \textit{perfect} function predictions (i.e., $\hat{r}_t = r_t$).}
\label{table:perfect_func_pred}
\end{table}}

{\small
\begin{table}[h!]
\centering
\begin{tabular}{| c || c | c | c |} 
    \hline
    General function prediction & \makecell{General case \\ (\textbf{this work})} & Worst-case & Perfect prediction \\
    \hline\hline
    Gradient prediction & $\nabla \hat{s}_t$ & $\nabla \hat{s}_t = $ any & $\nabla \hat{s}_t = \nabla s_t$ \\
    \hline
    $\eta_t$ & $1/\sqrt{4\beta^2 + (V'_{t-1})^2 + D'_{t-1}}$ & $ O(1/\sqrt{t})$ & $1/\sqrt{4\beta^2 + (V'_{t-1})^2}$\\
    \hdashline
    Regret & \makecell{$O\left(1 + \sqrt{D'_T} + \min\left\{ V'_T, \sqrt{T} \right\}\right)$ \\ (Thm. \ref{theorem:OptCMD_convex})} & $O(\sqrt{T})$ \cite{zinkevich2003online} & $O\left(1 + \min\left\{ V'_T, \sqrt{T} \right\}\right)$ \cite{campolongo2020temporal} \\
    \hline
\end{tabular}
\caption{Generality of the static regret bounds in the case of \textit{general} function predictions.}
\label{table:func_pred}
\end{table}}

\bibliographystyle{plain}
\bibliography{references.bib}

\end{document}